\documentclass[12pt,reqno]{amsart}

\usepackage{amssymb,bm,mathrsfs}

\usepackage{enumerate}
\usepackage{tikz}
\usepackage[centering,width=6in]{geometry}
\geometry{a4paper,height=9in}
\parskip.5ex plus.1ex
\linespread{1.1}
\usepackage[colorlinks,pdfstartview=FitH]{hyperref}

\hypersetup{
    colorlinks,
    citecolor=blue,
    filecolor=black,
    linkcolor=red,
    urlcolor=black
}

\theoremstyle{plain}
\newtheorem{theorem}{Theorem}[section]
\newtheorem{proposition}[theorem]{Proposition}
\newtheorem{lemma}[theorem]{Lemma}

\newtheorem{op}[theorem]{Open Problem}

\theoremstyle{definition}
\newtheorem{definition}[theorem]{Definition}
\newtheorem{example}[theorem]{Example}

\theoremstyle{remark}
\newtheorem{rem}[theorem]{Remark}
\numberwithin{equation}{section}

\DeclareMathOperator{\Var}{Var}
\DeclareMathOperator{\hdim}{dim_H}
\DeclareMathOperator{\Fdim}{dim_F}

\newcommand{\N}{\mathbb{N}}
\newcommand{\Z}{\mathbb{Z}}
\newcommand{\R}{\mathbb{R}}

\newcommand{\E}{\mathbb{E}}

\DeclareFontFamily{U}{mathx}{}
\DeclareFontShape{U}{mathx}{m}{n}{<-> mathx10}{}
\DeclareSymbolFont{mathx}{U}{mathx}{m}{n}
\DeclareMathAccent{\widehat}{0}{mathx}{"70}
\DeclareMathAccent{\widecheck}{0}{mathx}{"71}

\usepackage{graphicx} 

\title[Fourier dimension of the graph of fBm with $H\ge 1/2$]{Fourier dimension of the graph of fractional Brownian motion with $H\ge 1/2$}

\author{Chun-Kit Lai}
\address{San Francisco State University Department of Mathematics, 1600 Holloway Ave, San Francisco, CA 94132
}
\curraddr{}
\email{cklai@sfsu.edu}
\thanks{}

\author{Cheuk Yin Lee}
\address{School of Science and Engineering, The Chinese University of Hong Kong, Shenzhen, Guangdong 518172, China}
\email{leecheukyin@cuhk.edu.cn}

\begin{document}

\keywords{Fourier dimension; graph; fractional Brownian motion; stable process}

\subjclass[2010]{Primary 42B10, 60G22; Secondary 60G52, 28A80}

\begin{abstract}
    We prove that the Fourier dimension of the graph of fractional Brownian motion with Hurst index greater than $1/2$ is almost surely 1. This extends the result of Fraser and Sahlsten (2018) for the Brownian motion and confirms part of the conjecture of Fraser, Orponen and Sahlsten (2014). We introduce a combinatorial integration by parts formula to compute the moments of the Fourier transform of the graph measure. The proof of our main result is based on this integration by parts formula together with Fa\`a di Bruno's formula and strong local nondeterminism of fractional Brownian motion. We also show that the graph of a symmetric $\alpha$-stable process has Fourier dimension 1 almost surely when $\alpha \in [1,2]$ and is a Salem set when $\alpha = 1$.
\end{abstract}

\maketitle

\section{Introduction}

The study of fractal properties of the graph of stochastic processes has been a significant part of the theory in fractal geometry and probability. This was first initiated in 1953 by Taylor \cite{Taylor53} who showed that the Hausdorff dimension of the graph of the one-dimensional standard Brownian motion is almost surely $3/2$.  This result was generalized to the fractional Brownian motion by Orey \cite{Orey1970}, who showed that the Hausdorff dimension of the graph of fractional Brownian motion with Hurst index $H\in(0,1)$ is almost surely $2-H$. 
Recall that a \emph{fractional Brownian motion} (fBm) with Hurst index $H$ is a continuous Gaussian process $B = \{B(t) : t \ge 0\}$ with mean 0 and covariance
\begin{align}\label{E:fBm:cov}
   \mbox{Cov} (B(s), B(t)) = \E[B(t)B(s)] = \frac12 (t^{2H}+s^{2H}-|t-s|^{2H}) \quad \text{for $t, s \ge 0$}.
\end{align}
Note that the fBm with $H = 1/2$ is the standard Brownian motion.

Another generalization is the symmetric $\alpha$-stable process with $\alpha\in(0,2]$, which is a self-similar L\'{e}vy process whose sample paths are not continuous unless $\alpha = 2$.
Recall that $X=\{X(t) : t \ge 0\}$ is a \emph{L\'{e}vy process} if $X$ has independent and stationary increments; $X$ is called a \emph{symmetric $\alpha$-stable process} if it is a L\'{e}vy process whose characteristic function is
\begin{equation}\label{eq_stable-process}
\E \left[ e^{i \xi (X(t)-X(s))}\right] = e^{- (t-s)|\xi|^{\alpha} } \quad \text{for $t\ge s>0$ and $\xi\in \R$.}
\end{equation}
When $\alpha=2$, $X$ is the standard Brownian motion.
The graph of the symmetric $\alpha$-stable process has Hausdorff dimension $\max\{1, 2-1/\alpha\}$ almost surely. We refer to Falconer's book \cite[Chapter 16]{Falconer-book} for an excellent exposition about these basic results. 

The Hausdorff dimension results have also been generalized to Gaussian random fields, namely, stochastic processes $\{X(t): t \in \R^N\}$ indexed by $t \in \R^N$ with values $X(t)\in \R^d$, whose finite-dimensional distributions are all Gaussian.
This includes, for example, the work of Adler \cite{Adler1977} for Gaussian random fields with stationary increments, and Xiao \cite{X09} for Gaussian random fields under more general conditions.
For related developments on fractal properties of Gaussian random fields, the reader is referred to the survey article by Xiao \cite{X13}.

Apart from the Hausdorff dimension, there is a growing interest in the Fourier dimension, which is another notion of dimension that has many intriguing properties. In this paper,  the Fourier transform of a finite Borel measure $\mu$ on $\R^d$ is defined by 
\[
\widehat{\mu}(\xi)  = \int_{\R^d} e^{-2\pi i \xi\cdot x}d\mu(x).
\]
The Fourier dimension of a Borel probability measure $\mu$ is defined by 
\[
\Fdim \mu= \sup \{\beta \ge 0:   \ |\widehat{\mu}(\xi)|\lesssim |\xi|^{-\beta/2}\}.
\]
The \emph{Fourier dimension} of a  Borel set $E$ in $\R^n$, denoted by $\Fdim E$, is 
\[
\Fdim E = \sup \{\beta \in [0,n]: \exists \mu \ \text{supported on $E$ such that}  \ |\widehat{\mu}(\xi)|\lesssim |\xi|^{-\beta/2}\}.
\] 
Throughout, the notation $f(x)\lesssim g(x)$ means that there exists a constant $0<C<\infty$ such that $f(x)\le Cg(x)$ for all $x$.  
The study of the Fourier dimension lies at the interface of classical harmonic analysis, geometric measure theory, number theory, and probability. See, e.g., Mattila's book \cite{Mattila} for different facets about the interplay between Fourier decay and sets of fractional dimensions. 

Fourier dimension encapsulates many arithmetic properties of sets. By a classical result of Davenport, Erd\H{o}s and LeVeque \cite{DEL63}, if a compact set $E\subset \R$ supports a measure $\mu$ with $\Fdim\mu>0$, then $\mu$-almost all $x\in E$ are normal to all bases $b$ when expanded in any $b$-digit expansions. {\L}aba and Pramanik \cite{LP09} showed that, under certain technical assumptions, a set with sufficiently large Fourier dimensions contains a non-trivial arithmetic progression of three terms. Despite this property being false in general \cite{S2017}, the paper initiated a series of new research concerning the connection between the arithmetic patterns and Fourier decays (see, e.g., \cite{LP22} and the references therein). In classical harmonic analysis, Fourier decay due to the curvature of the sphere plays an important role in the Fourier restriction phenomenon on the sphere. The related Fourier restriction conjecture has become one of the central conjectures in harmonic analysis. Indeed, a measure with a positive Fourier dimension is already enough for Fourier restriction. We refer to \cite[Chapter 19-22]{Mattila} for more details. A measure supported on a hyperplane does not have Fourier decay or Fourier restriction. Therefore, having a positive Fourier dimension is considered to be some kind of positive curvature conditions. 

Because of its importance in many fields, computation of Fourier dimension, for both deterministic and random sets, has also become a very natural problem for researchers. It is well known that
$$
\Fdim E \le \hdim E.
$$
A set whose Fourier and Hausdorff dimensions are equal is called a \emph{Salem set}. It is never easy to find explicit deterministic Salem sets. Indeed, the middle-third Cantor set has Fourier dimension zero. One can prove this by a direct calculation, or appeal to the normal number results we mentioned earlier since no points on the middle-third Cantor sets can be normal to the basis 3. In the current literature, the only known deterministic Salem sets are all constructed via diophantine approximation which was first discovered by Kaufman \cite{Kaufman81}. R. Fraser and Hambrook recently generalized Kaufman's result to any dimensions \cite{FH2023}. Many measures are also determined to have positive Fourier dimensions, though it is unknown if their supporting sets are Salem. Those include the Patterson--Sullivan measures on Schottky groups \cite{BD17}, non-linear image of self-similar measures \cite{MS2018} and  badly approximable numbers \cite{Kaufman80} (see also \cite{JS16} for some generalization). 

Salem sets or sets with positive Fourier dimensions are  ubiquitous in random settings. Kahane \cite{Kah} showed that the image of a set under the fractional Brownian motion is almost surely Salem. 
Kahane's result was generalized to Gaussian random fields with stationary increments \cite{SH06}, Brownian sheet \cite{KWX06}, and fractional Brownian sheets \cite{WX2007}. Moreover, random Cantor sets obtained by randomly selecting basic intervals with spatial independence are in general Salem sets (see, e.g., \cite{S2017,SS2018}). 

In \cite{SH06} and \cite{K1993}, it was raised as an open question whether the graph and the level sets of fractional Brownian motion (fBm) are Salem sets. Mukeru \cite{M2018} showed that the zero sets of fBm is almost surely Salem by studying the Fourier transform of its local time. 
For the graph of functions, Fraser, Orponen and Sahlsten \cite{FOS2014} proved that the Fourier dimension of the graph of any continuous function is at most 1, which implies that the graph of fBm is almost surely not Salem.
They conjectured that the Fourier dimension of the graph of fBm is equal to 1.
While this remained an open problem for fBm, Fraser and Sahlsten \cite{FS2018} later proved that the Fourier dimension of the graph of the standard Brownian motion is exactly 1, the maximal value possible.

\subsection{Main results and contributions.} The main result of this paper is to show that the Fourier dimension is also equal to 1 for fractional Brownian motion with Hurst index $H\ge 1/2$.

\begin{theorem}\label{th:dimF:fBm}
    Let ${\mathcal B} = \{ B(t): t \ge 0\}$ be a fractional Brownian motion with Hurst index $H\in [1/2,1)$.
    Then the Fourier dimension of the graph of ${\mathcal B}$ is almost surely 1. 
\end{theorem}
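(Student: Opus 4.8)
The plan is to construct, on a random event of full probability, a measure $\mu$ supported on the graph $G = \{(t, B(t)) : t \in [0,1]\}$ whose Fourier transform decays like $|\xi|^{-1/2+\epsilon}$ for every $\epsilon > 0$; combined with the upper bound $\Fdim G \le 1$ from Fraser--Orponen--Sahlsten, this gives $\Fdim G = 1$ almost surely. The natural candidate is the \emph{graph measure} $\mu_f$, the pushforward of Lebesgue measure on $[0,1]$ under $t \mapsto (t, B(t))$, so that for $\xi = (\xi_1, \xi_2) \in \R^2$,
\[
\widehat{\mu_f}(\xi) = \int_0^1 e^{-2\pi i (\xi_1 t + \xi_2 B(t))}\, dt.
\]
Along the direction $\xi_2 = 0$ this integral does not decay (it is essentially $\widehat{\mathbf 1_{[0,1]}}$), so the decay estimate only needs to hold in the regime $|\xi_2| \gtrsim |\xi_1|$, and in fact the whole difficulty is concentrated there; this is consistent with $\Fdim G = 1$ being the maximum, not $3/2 = \hdim G$.

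First I would set up the moment method: to show $|\widehat{\mu_f}(\xi)| \lesssim |\xi|^{-1/2+\epsilon}$ almost surely (up to logarithmic factors, via Borel--Cantelli along a lattice of frequencies together with a continuity/derivative estimate in $\xi$), it suffices to bound $\E\big[|\widehat{\mu_f}(\xi)|^{2k}\big]$ by something like $C_k |\xi|^{-k + k\epsilon}$ for all large integers $k$. Expanding the $2k$-th moment produces a $2k$-fold integral over $(t_1,\dots,t_{2k}) \in [0,1]^{2k}$ of
\[
\E\!\left[\exp\!\Big(-2\pi i \xi_2 \sum_{j=1}^{2k}\varepsilon_j B(t_j)\Big)\right] e^{-2\pi i \xi_1 \sum_j \varepsilon_j t_j},
\]
where $\varepsilon_j = \pm 1$. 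The inner expectation is the characteristic function of a centered Gaussian, hence $\exp\big(-2\pi^2 \xi_2^2\, \mathrm{Var}(\sum_j \varepsilon_j B(t_j))\big)$. The core analytic input is \textbf{strong local nondeterminism} of fBm: after ordering the $t_j$, the conditional variance of $B(t_{j})$ given $B(t_1),\dots,B(t_{j-1})$ is bounded below by a constant times $\min_{i<j}|t_j - t_i|^{2H}$, which lets one bound $\mathrm{Var}(\sum \varepsilon_j B(t_j))$ from below by a sum of gap terms and thereby control the Gaussian exponential. This is where I would invoke the \emph{combinatorial integration by parts formula} and \emph{Fa\`a di Bruno's formula} advertised in the abstract: the oscillatory factor $e^{-2\pi i \xi_1 \sum \varepsilon_j t_j}$ must be exploited (otherwise one only gets decay in $|\xi_2|$, losing a full power), so I would integrate by parts in the $t_j$ variables, each integration producing a factor $|\xi_1|^{-1}$ at the cost of differentiating the Gaussian density factor $\exp(-2\pi^2\xi_2^2 V(t))$, and Fa\`a di Bruno organizes the resulting derivatives of the composed exponential into a combinatorial sum that can be matched against the SLND lower bounds on the $V$-increments.

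The main obstacle, and the technical heart of the argument, is the bookkeeping in that last step: after integration by parts the integrand becomes a sum over set partitions (Fa\`a di Bruno) of products of derivatives of $V(t) = \mathrm{Var}(\sum \varepsilon_j B(t_j))$, each weighted by a power of $\xi_2^2$, and one must show that integrating this over the simplex $\{0 \le t_{\sigma(1)} \le \cdots \le t_{\sigma(2k)} \le 1\}$ — using SLND to replace $V$ and its derivatives by the nearest-gap quantities — produces exactly the cancellation that upgrades $|\xi_2|$-decay to the balanced bound $|\xi|^{-k+k\epsilon}$ uniformly in the sign pattern $(\varepsilon_j)$ and the regime $|\xi_2| \gtrsim |\xi_1|$. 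Handling sign patterns with $\sum_j \varepsilon_j = 0$ versus $\ne 0$, the boundary terms in the integration by parts, and keeping the constant $C_k$ under control well enough to sum the Borel--Cantelli series all feed into this estimate; the restriction $H \ge 1/2$ should enter through the precise form of the SLND constant and the derivative bounds on the covariance, which is presumably why the low-$H$ case is left open. Once the $2k$-th moment bound is in hand, a standard chaining/Borel--Cantelli argument over a dyadic net of frequencies $\xi$, using a crude derivative bound $|\nabla \widehat{\mu_f}(\xi)| \lesssim |\xi|$, promotes it to the almost-sure pointwise decay $|\widehat{\mu_f}(\xi)| \lesssim |\xi|^{-1/2}\log|\xi|$, completing the lower bound.
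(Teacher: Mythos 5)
Your outline is, in substance, the paper's approach: $2q$-th moment method, Kahane's simplex decomposition, $q$-fold integration by parts in the ordered time variables to extract $|\xi_1|^{-q}$, Fa\`a di Bruno's formula to organize the derivatives of the Gaussian factor $\exp(\pi g_{\bf a})$, and strong local nondeterminism (Pitt's lemma) together with direct estimates on $\partial_i g_{\bf a}$ and $\partial_i\partial_j g_{\bf a}$ to control the resulting sum over partitions. That is precisely the content of Lemma \ref{eq_expect_simplex}, Proposition \ref{prop_by_part-iterate}, Proposition \ref{Lemma_deBruno}, Propositions \ref{lem:LND}--\ref{lem:ddg:bd}, and Theorem \ref{theorem_hard-estimate-introd}.

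However, the geometry in your opening paragraph is reversed, and taken literally it would send you after the wrong estimate. The slice $\xi_2=0$ is not where decay fails: $\widehat{\mu_f}(\xi_1,0)=\widehat{\mathbf{1}_{[0,1]}}(\xi_1)\sim|\xi_1|^{-1}$, which is more than the needed $|\xi|^{-1/2}$. The regime $|\xi_2|\gtrsim|\xi_1|$ is in fact the \emph{easy} one, because there $\E[|\widehat{\mu_{\mathcal G}}(\xi)|^{2q}]$ is dominated by the moments of the image measure $\nu$ (Lemma \ref{lemma_bounding_by_image}), for which Kahane's bound $\E[|\widehat\nu(\xi_2)|^{2q}]\lesssim|\xi_2|^{-q/H}$ is classical. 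The hard regime is the opposite one, $|\xi_1|\gtrsim|\xi_2|$ --- the ``horizontal'' region of Proposition \ref{prop_sufficiency} --- which is exactly where the combinatorial integration by parts and Fa\`a di Bruno machinery are needed; your later remark that the $\xi_1$-oscillation must be exploited is the correct instinct and contradicts the earlier framing, so reconcile those. A few smaller points: every sign pattern $\boldsymbol\varepsilon$ in the moment expansion automatically satisfies $\sum_j\varepsilon_j=0$ (half the signs come from $t$'s, half from $s$'s), so there is no $\sum\varepsilon_j\ne0$ case to handle; $\hdim\mathcal G(B)=2-H$, which equals $3/2$ only at $H=1/2$, not throughout $(1/2,1)$; and the paper avoids the Borel--Cantelli chaining with explicit control of $C_q$ by invoking Lemma \ref{randmeaslem}, which delivers $\Fdim\mu\ge\gamma$ from $\E[|\widehat\mu(\xi)|^{2q}]\le C_q|\xi|^{-\gamma q}$ with no growth condition on $C_q$; your chaining route would also work given the $C^{q^2}$ constant Theorem \ref{theorem_hard-estimate-introd} produces, but it is unnecessary.
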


The proof of this theorem is based on the estimation of the $2q$-th moments of the two-dimensional graph measure induced by the standard Lebesgue measure on the time interval. The estimate can be divided into vertical and horizontal directions (see Proposition \ref{prop_sufficiency} for a precise statement). In the vertical direction, it is dominated by the image measure of the process, which has been well studied in the literature. The challenge is to obtain the estimate for the horizontal direction, in which we expect the behavior is similar to the Lebesgue measure.

In the horizontal direction for the standard Brownian motion, Fraser and Sahlsten \cite{FS2018} used It\^{o}'s formula to transfer the required integral into an It\^{o} stochastic integral. In this proof, the key ingredient is that  the quadratic variation measure is exactly the Lebesgue measure, which allowed  them to represent the graph measure essentially as an It\^{o} integral. Since an It\^{o} integral is a martingale, the Burkholder--Davis--Gundy inequality can be applied to obtain the required moment estimates. 
In \cite{DL2023}, a similar approach was used to study the Fourier dimension for some time-changed Brownian motion with strictly increasing time $V(t)$. Among many other results,  Dysthe and the first named author showed that the Fourier dimension is at least $2/3$ if $V(t)$ is bi-Lipschitz, but is obviously far from sharp.  We note that stochastic calculus has also been established for fBm; see, e.g., \cite{Nualart}, \cite{Mishura} and \cite{fBm-integral}.
However, the stochastic calculus approach used in \cite{FS2018, DL2023} does not seem to carry over directly to fBm, since the computations and moment estimates do not simplify the same way as in the case of Brownian motion.

\subsection{Overview and novelty of the proof} 
To prove Theorem \ref{th:dimF:fBm}, we will develop a 
different approach to the horizontal direction through three combinatorial steps, the last two of which are believed to be novel. 
The following is the outline of the proof and the organization of the paper. 

In Section \ref{Pre}, we will lay out the basic strategy to estimate the $2q$-th moments. In particular, the first combinatorial step, which is a well-known step due to Kahane \cite{Kahane64}, will be presented. We will decompose the $2q$-th moment of the graph measures into $\binom{2q}{q}$ many iterated integrals of $2q$ variables $u_1,\dots, u_{2q}$ whose domain is the simplex where $0\le u_1<\cdots<u_{2q}\le 1$ (see Lemma \ref{eq_expect_simplex}). 

Using the fact that the fBm is a self-similar Gaussian process,  each iterated integral is a Fourier transform of the characteristic function of a linear combination of fBm on the simplex, which is of the form 
\begin{align}\label{E:exp(-var)}
    \exp\left\{-\Var\left(\sum_{i=1}^{2q}\varepsilon_i B(u_i)\right)\right\}
\end{align}
where $\Var(X)$ denotes the variance of a random variable $X$.  From classical Fourier analysis, we know that Fourier decay can be achieved by performing integration by parts. 
In order to obtain the correct order of decay, we need to apply integration by parts $q$ times on these $2q$ iterated integrals. 
In Section \ref{sec:bypart}, we present our second combinatorial step, which is to devise a systematic way to perform integration by parts multiple times, resulting in a combinatorial formula.
We describe all the combinatorial terms that appeared in these $q$ successive integrations by parts (see Proposition \ref{prop_by_part-iterate}). In particular, the partial derivatives of the function \eqref{E:exp(-var)} appear.

Indeed, the combinatorial integration by parts formula makes no reference to the process. For the symmetric $\alpha$-stable process, we can readily estimate every term using independence and stationarity of the increments and the fundamental theorem of calculus. In Section \ref{sec: alpha-stable}, we prove the following result:

\begin{theorem}\label{th:dimF:stable}
    Let ${\mathcal X} = \{ X(t): t \ge 0\}$ be a symmetric $\alpha$-stable process with $\alpha\in(0,2]$.
    Then the Fourier dimension of the graph of ${\mathcal X}$ is at least $\min\{1,\alpha\}$ almost surely. Therefore, the graph of ${\mathcal X}$ has Fourier dimension 1 almost surely when $\alpha \ge 1$,
    and is a Salem set when $\alpha = 1$.
\end{theorem}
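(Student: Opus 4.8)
The plan is to derive Theorem~\ref{th:dimF:stable} from the moment criterion of Proposition~\ref{prop_sufficiency}: it suffices to show that for every integer $q\ge 1$ and every $\beta<\min\{1,\alpha\}$, the $2q$-th moment $\E\big[|\widehat{\mu_{\mathcal X}}(\xi)|^{2q}\big]$ of the Fourier transform of the graph measure $\mu_{\mathcal X}$ --- the pushforward of Lebesgue measure on $[0,1]$ under $t\mapsto(t,X(t))$ --- decays at least like $|\xi|^{-q\beta}$ in both the vertical regime $|\xi_2|\gtrsim|\xi_1|$ and the horizontal regime $|\xi_1|\gtrsim|\xi_2|$; letting $q\to\infty$ then removes the dimensional loss incurred in passing from moments to a pointwise decay bound. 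Applying Kahane's first combinatorial step (Lemma~\ref{eq_expect_simplex}) decomposes this moment into $\binom{2q}{q}$ iterated integrals over the simplex $0\le u_1<\cdots<u_{2q}\le 1$, each of the form $\int e^{-2\pi i\xi_1 L(u)}\,\E\big[e^{-2\pi i\xi_2\sum_{j}\varepsilon_j X(u_j)}\big]\,du$, with $L(u)=\sum_j\varepsilon_j u_j$ and $\varepsilon_j\in\{\pm1\}$ containing exactly $q$ plus signs and $q$ minus signs. The decisive feature of the $\alpha$-stable case is that, writing $X(u_j)$ as a sum of the independent stationary increments $\Delta_k=X(u_k)-X(u_{k-1})$ with $u_0=0$, and setting $a_k=\sum_{j\ge k}\varepsilon_j$, the characteristic factor equals $\exp\big(-c|2\pi\xi_2|^{\alpha}\sum_{k}|a_k|^{\alpha}(u_k-u_{k-1})\big)$, whose exponent is \emph{affine} in $(u_1,\dots,u_{2q})$ on the simplex. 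Hence Fa\`a di Bruno's formula degenerates, and every partial derivative of this factor is the factor itself times a bounded constant times a power of $|\xi_2|^{\alpha}$.

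For the vertical regime I would bound $|e^{-2\pi i\xi_1 L(u)}|$ by $1$ and integrate the characteristic factor over the simplex. Since the walk $a_1,\dots,a_{2q}$ has $a_1=0$, steps $\pm1$, and $q$ ascents and $q$ descents, at least $q$ of the $a_k$ are nonzero; the corresponding gaps $u_k-u_{k-1}$ are free nonnegative variables on the simplex and each contributes $\int_0^{\infty}e^{-c|\xi_2|^{\alpha}v}\,dv\lesssim|\xi_2|^{-\alpha}$, whence $\E\big[|\widehat{\mu_{\mathcal X}}(\xi)|^{2q}\big]\lesssim_q|\xi_2|^{-\alpha q}$, which suffices since $\alpha\ge\min\{1,\alpha\}$. (This is essentially the statement, due to Kahane, that the image of $[0,1]$ under an $\alpha$-stable process is a Salem set of dimension $\min\{1,\alpha\}$.)

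The horizontal regime is the heart of the matter and is handled by the combinatorial integration-by-parts formula of Proposition~\ref{prop_by_part-iterate}, applied $q$ times to each simplex integral. Each integration by parts rewrites $e^{-2\pi i\xi_1 L}$ as $\tfrac{-1}{2\pi i\varepsilon_j\xi_1}\partial_{u_j}e^{-2\pi i\xi_1 L}$ --- legitimate because $\varepsilon_j=\pm1\ne0$ --- and thereby gains a factor $|\xi_1|^{-1}$ at the cost of either a boundary term on a face of the simplex or a derivative landing on the characteristic factor; after $q$ steps one obtains $|\xi_1|^{-q}$ times the finite sum of terms enumerated by Proposition~\ref{prop_by_part-iterate}. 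In the term where all $q$ derivatives hit the characteristic factor, the affine structure bounds the derivative by $(C|\xi_2|^{\alpha})^{q}$ while the surviving factor $\exp\big(-c|\xi_2|^{\alpha}\sum_k|a_k|^{\alpha}(u_k-u_{k-1})\big)$ integrates over the simplex to $\lesssim|\xi_2|^{-\alpha q}$ exactly as above, so the two powers of $|\xi_2|^{\alpha}$ cancel and the term is $\lesssim_q|\xi_1|^{-q}$ uniformly in $\xi_2$. For the boundary terms one identifies adjacent time variables --- where an increment vanishes --- which recombines the $\varepsilon_j$ into a shorter sign vector over fewer points; here the fundamental theorem of calculus supplies an extra $|\xi_1|^{-1}$ from each remaining one-dimensional oscillatory integral, and the reduced walk is checked to retain enough nonzero partial sums to absorb the accumulated powers of $|\xi_2|^{\alpha}$. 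Summing the $\binom{2q}{q}$ blocks and the finitely many combinatorial terms yields $\E\big[|\widehat{\mu_{\mathcal X}}(\xi)|^{2q}\big]\lesssim_q|\xi_1|^{-q}$ in the horizontal regime, which combined with the vertical estimate completes the proof.

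The main obstacle I anticipate is precisely the bookkeeping for the boundary terms: although Proposition~\ref{prop_by_part-iterate} lists them explicitly, one still must verify that collapsing faces of the simplex never degenerates the recombined sign vector so badly that its characteristic factor loses the powers of $|\xi_2|^{-\alpha}$ needed to offset the powers of $|\xi_2|^{\alpha}$ produced by differentiation --- equivalently, that the reduced walk always retains at least as many nonzero partial sums as the number of derivatives taken in forming that term. Once this is secured, everything else --- the affine exponent, the per-gap exponential integral, the fundamental theorem of calculus for leftover one-dimensional oscillatory integrals, and the gain of one factor $|\xi_1|^{-1}$ per integration by parts --- is routine, which is exactly why the $\alpha$-stable case can be settled here while the fBm theorem requires the full strength of Fa\`a di Bruno's formula together with strong local nondeterminism.
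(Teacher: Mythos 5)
Your proposal diverges from the paper's argument at a structurally important point, and the spot you yourself flag as ``the main obstacle'' is in fact a genuine gap that the paper avoids by a device you did not use.

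\textbf{Where you differ.} The paper begins the horizontal estimate with a rescaling (Lemma~\ref{lemma4.2}): exploiting self-similarity, it replaces $\xi_2$ by the rescaled time horizon $T=|\xi_2|^{1/\alpha}$ and the rescaled frequency $\lambda=\xi_1/|\xi_2|^{1/\alpha}$, so that the characteristic factor $G_{\boldsymbol{\varepsilon}}({\bf u})=\E\bigl[e^{2\pi i\sum_j\varepsilon_jX(u_j)}\bigr]$ no longer contains $\xi_2$ at all; the entire $\xi_2$-dependence sits in the prefactor $|\xi_2|^{-2q/\alpha}$ and in $T$. After this, the paper only needs a crude bound $|\mathcal{I}[\boldsymbol\sigma,G_{\boldsymbol\varepsilon}]|\le 2^qMT^q$ (Theorem~\ref{thm_never-change-sign}), proved by the never-change-sign property and the fundamental theorem of calculus, with no tracking of exponential decay whatsoever: the algebra $T^q/|\lambda|^q = |\xi_2|^{2q/\alpha}/|\xi_1|^q$ exactly cancels the prefactor. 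Your plan instead leaves $|\xi_2|^\alpha$ in the exponent of the characteristic factor, bounds the $q$-fold mixed derivative by $(C|\xi_2|^\alpha)^q$ times $G$, and seeks to cancel that against $|\xi_2|^{-\alpha q}$ coming from integrating the surviving exponential over the remaining gaps.

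\textbf{The gap.} The cancellation scheme requires that, for every one of the $3^q$ terms produced by Proposition~\ref{prop_by_part-iterate} (having, say, $\ell$ derivatives and $m=q-\ell$ boundary collapses), the reduced walk retains at least $\ell$ indices $k$ with $a_k\ne 0$ among the surviving gaps. Each collapse $\Phi^\pm$ deletes one term $|a_k|^\alpha(u_k-u_{k-1})$ from the exponent and so can eliminate one nonzero partial sum, and one must check that starting from the at-least-$q$ nonzero $a_k$ of the original walk and performing $m$ collapses never drops you below $\ell=q-m$. This verification is plausible but it is not ``routine'': you point at it and then stop, whereas it is precisely the place where the argument must be carried through. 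The paper never faces this question because after rescaling it needs no decay from the exponential at all --- boundedness of $G$ together with the $T^q/|\lambda|^q$ bookkeeping is enough.

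\textbf{A smaller error.} The sentence claiming that ``the fundamental theorem of calculus supplies an extra $|\xi_1|^{-1}$ from each remaining one-dimensional oscillatory integral'' is confused. The fundamental theorem of calculus, as used in Theorem~\ref{thm_never-change-sign}, eliminates a derivative variable and replaces it by two boundary evaluations; it produces no additional decay in $\xi_1$. All of the $|\xi_1|^{-q}$ decay has already been obtained from the $q$ integrations by parts (Proposition~\ref{prop_by_part-iterate}), and there is nothing extra to be gained afterwards, nor is anything extra needed.

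\textbf{Summary.} Your vertical estimate is correct and agrees with Proposition~\ref{prop-vertical-stable}. Your horizontal plan, while built on the same integration-by-parts scaffolding, omits the rescaling $(\lambda,T)\leftarrow(\xi_1/|\xi_2|^{1/\alpha},|\xi_2|^{1/\alpha})$ and therefore trades the paper's clean ``boundedness plus bookkeeping'' for a delicate $|\xi_2|^{\pm\alpha}$ cancellation that you acknowledge but do not establish. If you insist on your route, you must prove that the reduced walk retains at least one nonzero partial sum per surviving derivative; but the simpler and more robust fix is to adopt the rescaling, after which Theorem~\ref{thm_never-change-sign} finishes the job and the sign and counting issues evaporate.
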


 When $\alpha=2$, ${\mathcal X}$ is the standard Brownian motion and Theorem \ref{th:dimF:stable} recovers the main result of Fraser and Sahlsten \cite{FS2018}. In Section \ref{section 5}, we begin our setup for the proof of Theorem \ref{th:dimF:fBm} under a similar strategy as Theorem \ref{th:dimF:stable}. Finally, it all boils down to an estimate for the $2q$-iterated integrals of the mixed partial derivatives of \eqref{E:exp(-var)} (see Theorem \ref{theorem_hard-estimate-introd}). We will first prove Theorem \ref{th:dimF:fBm} assuming Theorem \ref{theorem_hard-estimate-introd}.

 The remainder of the paper will be devoted to proving Theorem \ref{theorem_hard-estimate-introd}. As is well known, fBm does not have independent increments when $H\ne 1/2$, so it is more challenging to handle integrals involving \eqref{E:exp(-var)} and its mixed partial derivatives. 
 In Section \ref{Section 6}, we will develop a combinatorial setup for the estimation where our third combinatorial step is presented.  
 In order to estimate the partial derivatives of \eqref{E:exp(-var)} which is a function of the form $\exp(g({\bf x}))$, we will use a general chain rule for high-order derivatives known as {\it Fa\`{a} di Bruno's formula} \cite{FdB1855, FdB1857}.
 In particular, we will use a multi-variate, combinatorial version of Fa\`{a} di Bruno's formula to obtain a combinatorial derivative formula for $\exp(g({\bf x}))$ (see Lemma \ref{lem:fdb} below).
 Because of the covariance structure of the fBm, all mixed partial derivatives of order three or higher vanish and hence only the first and the second derivatives appear in this combinatorial formula.

In Section \ref{sec:variance}, we will establish estimates for the variance in \eqref{E:exp(-var)} and its first and second derivatives. The variance estimates are established using the \emph{strong local nondeterminism} property of fBm.
While the variance estimates are valid for any $H\in(0,1)$, the first derivative estimates rely strongly on the fact that $H\ge 1/2$ as the inequalities will change sign if we go below $1/2$.
The notion of local nondeterminism (LND) was first introduced by Berman \cite{B73} for Gaussian processes to overcome difficulties due to the lack of independent increments.
Pitt \cite{P78} extended the definition of LND to Gaussian random fields and proved a form of LND for fBm which was later called strong LND \cite{CD82, MP87}.
Strong LND has been further developed and applied to study various properties of Gaussian random fields. We refer to the article of Xiao \cite{X08} and the references therein for typical applications of strong LND and \cite{L22, LX23, LX25, KL25} 
for some recent developments.

Finally, in Section \ref{S:Fourier:decay}, we gather all the estimates together to obtain the upper bound via the Fa\`{a} di Bruno formula, which will complete the proof of Theorem \ref{theorem_hard-estimate-introd} and hence Theorem \ref{th:dimF:fBm}. 

   \subsection{Open problems.}  The following two problems remain open: 

   \begin{op}\label{op1}
   Determine whether the Fourier dimension of the graph of a fractional Brownian motion is $1$ when $H\in (0,1/2)$.
   \end{op}

   \begin{op}\label{op2}
Determine whether the Fourier dimension of the graph of a symmetric $\alpha$-stable process  is $1$ when $\alpha \in (0,1)$.
\end{op}
   
  We notice that the proof of Theorem \ref{th:dimF:fBm} relies on the local integrability of $x^{2H-2}$ around $x = 0$.  Moreover, the first derivative estimates in Proposition \ref{lem:dg:bd} will change sign when $H<1/2$. A much more refined estimate is needed to study Problem \ref{op1} and it is unclear even if Fourier dimension $1$ is still achievable.

  For the $\alpha$-stable process, we will see in the proof that the decay in the horizontal direction can achieve the desired decay rate even $\alpha<1$. Unfortunately, the decay in the vertical direction, which follows from the decay of the image measures, is at most $\alpha$ (See Proposition \ref{prop-vertical-stable}). When $\alpha<1$, the vertical direction does not give enough decay rate to guarantee that its Fourier dimension on the graph is 1, leaving Problem \ref{op2} open. Despite these extra technicalities, we believe that the integration by parts setup has laid the foundation to tackle these problems and to study Fourier decays for other stochastic processes and random fields in the future.

\noindent{\it Note: In October 2025, the second-named author and Samy Tindel built on the setting of this paper and resolved Problem \ref{op1}, showing that the Fourier dimension of the graph of fBm with $H<1/2$ is almost surely 1 \cite{lee2025fourierdimensionfractionalbrownian}. }


\section{Preliminaries}\label{Pre}
We begin by collecting some results that are valid for general processes in the next two sections. 
\subsection{General method for finding Fourier dimension.} 
A classical way of finding the Fourier dimension of a random measure is to compute all the $2q$-th moments of the measure. This method was first used by Kahane \cite{Kah}. General statements for this approach and special cases of the following lemma can be found in \cite{Ek2016} and \cite{DL2023}. We are going to use the following recent version due to Fraser.  

\begin{lemma}\cite[Lemma 8.1]{Jon2024}\label{randmeaslem}
Let $\mu = \mu_\omega$ be a random measure on $\R^d$ that is compactly supported almost surely. Suppose further that there exists $C_q < \infty$ such that 
\begin{equation}\label{exptdecayassump}
\mathbb{E}\left[|\widehat{\mu}(\xi)|^{2q}\right] \le C_q |\xi|^{-\gamma q} \quad \text{for all $\xi \in \R^d$ and $q\in\N$.} 
\end{equation}
Then, almost surely,
\begin{equation}\label{ftbound}
\Fdim \mu \ge \gamma. 
\end{equation}
\end{lemma}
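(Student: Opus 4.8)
The plan is to fix an arbitrary exponent $\beta$ with $0<\beta<\gamma$ and show that, almost surely, $|\widehat{\mu}(\xi)|\lesssim|\xi|^{-\beta/2}$; since $\Fdim\mu$ is defined as a supremum, intersecting the resulting almost sure events over a sequence $\beta_n\uparrow\gamma$ then yields $\Fdim\mu\ge\gamma$ almost surely, which is \eqref{ftbound}. Via Chebyshev's inequality, the hypothesis \eqref{exptdecayassump} controls $\widehat{\mu}(\xi)$ at each individual frequency, so the real task is to upgrade this to an estimate uniform in $\xi$. I would use the classical net argument (essentially Kahane's): discretize frequency space into dyadic annuli, place a fine net inside each annulus, use \eqref{exptdecayassump} with a union bound and Borel--Cantelli to control $\widehat{\mu}$ on the net, and use the smoothness of $\widehat{\mu}$ --- which holds because $\mu$ is compactly supported --- to pass from the net to the whole annulus.

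Because the support radius and total mass of $\mu$ are random, I would first introduce the events $A_R=\{\mathrm{supp}\,\mu\subseteq B(0,R)\}\cap\{\mu(\R^d)\le R\}$, which increase to a set of full measure as $R\to\infty$, and prove the decay almost surely on each $A_R$; this is enough, since then $\P(\Fdim\mu\ge\beta)\ge\sup_R\P(A_R)=1$. On $A_R$, the elementary inequality $|e^{ia}-e^{ib}|\le|a-b|$ gives the deterministic Lipschitz bound $|\widehat{\mu}(\xi)-\widehat{\mu}(\eta)|\le 2\pi\bigl(\int|x|\,d\mu(x)\bigr)|\xi-\eta|\le 2\pi R^2|\xi-\eta|$ for all $\xi,\eta\in\R^d$. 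For $k\in\N$ let $S_k=\{\xi\in\R^d:2^k\le|\xi|\le2^{k+1}\}$ and let $N_k\subseteq S_k$ be a $2^{-\beta k}$-net, so that $\#N_k\lesssim 2^{(1+\beta)dk}$; every $\xi\in S_k$ lies within $2^{-\beta k}$ of some $\xi'\in N_k$, and hence on $A_R$ one has $|\widehat{\mu}(\xi)|\le|\widehat{\mu}(\xi')|+2\pi R^2\,2^{-\beta k}\le|\widehat{\mu}(\xi')|+\tfrac12\,2^{-\beta k/2}$ once $k$ is large enough in terms of $R$ and $\beta$.

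It then remains to control $\max_{\xi'\in N_k}|\widehat{\mu}(\xi')|$. Fix $q\in\N$. By Chebyshev's inequality and \eqref{exptdecayassump}, each $\xi'\in N_k$ satisfies $\P\big(|\widehat{\mu}(\xi')|>\tfrac12\,2^{-\beta k/2}\big)\le 2^{2q}\,2^{\beta k q}\,C_q\,2^{-\gamma k q}$, so a union bound over $N_k$ gives
\begin{equation*}
\P\big(\max_{\xi'\in N_k}|\widehat{\mu}(\xi')|>\tfrac12\,2^{-\beta k/2}\big)\lesssim C_q\,2^{2q}\,2^{-k((\gamma-\beta)q-(1+\beta)d)}.
\end{equation*}
Since $\gamma-\beta>0$, choosing any integer $q>(1+\beta)d/(\gamma-\beta)$ makes the right-hand side summable in $k$, so Borel--Cantelli produces a single almost sure event $G$ (not depending on $R$) on which $\max_{\xi'\in N_k}|\widehat{\mu}(\xi')|\le\tfrac12\,2^{-\beta k/2}$ for all sufficiently large $k$. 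On $A_R\cap G$ the two estimates combine: for large $|\xi|$, writing $|\xi|\in S_k$, we get $|\widehat{\mu}(\xi)|\le 2^{-\beta k/2}\le 2^{\beta/2}|\xi|^{-\beta/2}$, while for bounded $\xi$ we simply use $|\widehat{\mu}(\xi)|\le\mu(\R^d)\le R$. Hence $|\widehat{\mu}(\xi)|\lesssim|\xi|^{-\beta/2}$ on $A_R\cap G$, completing the argument.

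I do not expect a serious obstacle --- the structure is the standard net argument --- but two points need attention. First, \eqref{exptdecayassump} carries no information at $\xi=0$ (its right-hand side is infinite there), so neither the random total mass nor the random support radius can be extracted from the hypothesis, which is precisely why the truncation events $A_R$ must be brought in. Second, the net spacing, the dyadic scale $k$, and the moment order $q$ must be tuned so that the union bound is summable in $k$ while the oscillation term stays below the target decay $2^{-\beta k/2}$; the choices of a $2^{-\beta k}$-net and of $q>(1+\beta)d/(\gamma-\beta)$ do this, and the availability of arbitrarily large $q$ in \eqref{exptdecayassump} is exactly what allows the argument to run for every $\beta<\gamma$.
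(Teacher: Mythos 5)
Your proof is correct and is essentially the standard Kahane-style net argument that the cited reference uses: Chebyshev at a fixed moment order, a union bound over a polynomially sized net in each dyadic annulus, Borel--Cantelli, and the deterministic Lipschitz bound on $\widehat{\mu}$ (valid on the truncation events $A_R$) to pass from the net to all frequencies. In particular you correctly handle the subtlety the paper flags --- no growth condition on $C_q$ in $q$ --- by committing to a single fixed $q>(1+\beta)d/(\gamma-\beta)$ for each target exponent $\beta<\gamma$ and then intersecting the resulting full-measure events over a sequence $\beta_n\uparrow\gamma$.
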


Note that, in the above formulation, there is no condition on the growth rate of $C_q$ with $q$, which is the novel feature of \cite{Jon2024}.
If \eqref{exptdecayassump} holds with a specific growth rate, say $C_q \lesssim q^{cq}$, then a more precise almost sure bound can be derived for the decay of $|\widehat{\mu}(\xi)|$; see, e.g., \cite{Kah, KWX06, SH06}.

There are other large deviation techniques in computing Fourier dimensions of random Cantor sets; see, e.g., \cite{S2017,SS2018}. Recently, the technique was extended to Mandelbrot cascade measures \cite{Mandelbrot-harmonic, Fdim-mandelbro}.

\subsection{Push-forward measures on graphs.} Let $X = \{X(t) : t \ge 0\}$ be a real-valued stochastic process.  For ${\bf t} = (t_1,\dots, t_q)$, ${\bf s} = (s_1,\dots, s_q)\in{\mathbb R}^q$, we define
\begin{align}\label{E:G(xi,t,s)}
G(\xi,{\bf t},{\bf s}) = {\mathbb  E} \left[e^{-2\pi i \xi \sum_{j=1}^q(X(t_j)-X(s_j))}\right].
\end{align}
 The graph of $X$ over the interval $[0,1]$ is the random set
\[
    \mathcal{G}(X) = \left\{ (t, X(t)) : t \in [0,1] \right\}.
\]
A natural Borel measure $\mu_{\mathcal{G}}$ can be defined on the graph of $X$ via the push-forward of the Lebesgue measure (denoted by $m$):
\[
    \mu_{\mathcal{G}}(E) = m\left\{ t \in [0,1]  : (t, X(t)) \in E\right\}.
\]
Its Fourier transform is given by 
\[
   \widehat{\mu_{\mathcal{G}}}(\xi_1,\xi_2) = \int_0^1 e^{-2\pi i (\xi_1 t+ \xi_2 X(t))}~dt.
\]
As Lemma \ref{randmeaslem} suggests, we may compute the almost sure Fourier dimension of the graph by estimating all the $2q$-th moments of the random measure   $\widehat{\mu_{\mathcal{G}}}(\xi_1,\xi_2)$. 
In doing so, we may rewrite $|\widehat{\mu}_{{\mathcal{G}}}(\xi_1,\xi_2)|^{2q}$ as the $2q$ iterated integral
\[
|\widehat{\mu}_{{\mathcal{G}}}(\xi_1,\xi_2)|^{2q} = \int_{[0,1]^{2q}} e^{-2\pi i\xi_1\left(\sum_{i=1}^{q}(t_{i}-s_{i})\right)} \cdot e^{-2\pi i\xi_2\left(\sum_{i=1}^{q}(X(t_{i})-X(s_{i}))\right)}d{\bf t}~d{\bf s}
\]
where $d{\bf t} = dt_1\cdots d t_q$ and $d{\bf s} = ds_1\cdots ds_q$. Taking expectation and using \eqref{E:G(xi,t,s)},
we have 
\begin{equation}\label{eq_expect}
\E\left[|\widehat{\mu}_{{\mathcal{G}}}(\xi_1,\xi_2)|^{2q}\right] = \int_{[0,1]^{2q}} e^{-2\pi i\xi_1\left(\sum_{i=1}^{q}(t_{i}-s_i)\right)} \cdot G(\xi_2,{\bf t},{\bf s})~d{\bf t}~d{\bf s}.
\end{equation}

\begin{lemma}\label{lemma_bounding_by_image}
   If $G$ is a non-negative real-valued function, then
    \[
\E\left[|\widehat{\mu_{\mathcal G}}(\xi_1,\xi_2)|^{2q}\right] \le  \E\left[|\widehat{\nu}(\xi_2)|^{2q}\right] \quad \text{for any $\xi_2 \in \R$,}
\]
where $\nu$ is the image measure of $X$, i.e.,
$\nu(E) = m\left\{t\in[0,1]: X(t)\in E\right\}$.
\end{lemma}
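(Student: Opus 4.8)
The plan is to reduce everything to the identity \eqref{eq_expect}, which already expresses $\E\big[|\widehat{\mu}_{\mathcal G}(\xi_1,\xi_2)|^{2q}\big]$ as the $2q$-fold integral over $[0,1]^{2q}$ of $e^{-2\pi i\xi_1\left(\sum_{i=1}^{q}(t_i-s_i)\right)}\,G(\xi_2,{\bf t},{\bf s})$. The only observation needed to get started is that the left-hand side is a non-negative real number, being the expectation of an even power of a modulus; hence it equals its own absolute value, and we may pass to moduli freely on the right.

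The first key step is then a one-line triangle inequality for integrals applied to \eqref{eq_expect}: the modulus of the integral is at most the integral of the modulus of the integrand. The factor $e^{-2\pi i\xi_1\left(\sum_{i=1}^{q}(t_i-s_i)\right)}$ has modulus $1$, and $G(\xi_2,{\bf t},{\bf s})\ge 0$ by hypothesis, so the modulus of the integrand is exactly $G(\xi_2,{\bf t},{\bf s})$. This yields
\[
\E\big[|\widehat{\mu}_{\mathcal G}(\xi_1,\xi_2)|^{2q}\big]\;\le\;\int\cdots\int_{[0,1]^{2q}} G(\xi_2,{\bf t},{\bf s})\,d{\bf t}\,d{\bf s}.
\]

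The second key step is to recognize the right-hand side as $\E\big[|\widehat{\nu}(\xi_2)|^{2q}\big]$. Writing $\widehat{\nu}(\xi_2)=\int_0^1 e^{-2\pi i\xi_2 X(t)}\,dt$, expanding $|\widehat{\nu}(\xi_2)|^{2q}$ as a $2q$-fold iterated integral exactly as in the derivation of \eqref{eq_expect} but with the time-variable exponential absent, interchanging expectation and integration, and invoking the definition \eqref{E:G(xi,t,s)} of $G$, one gets $\E\big[|\widehat{\nu}(\xi_2)|^{2q}\big]=\int\cdots\int_{[0,1]^{2q}} G(\xi_2,{\bf t},{\bf s})\,d{\bf t}\,d{\bf s}$, which finishes the argument. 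I do not anticipate any genuine obstacle here: once \eqref{eq_expect} is in hand the proof is immediate. The only points worth a word are the use of Fubini--Tonelli to swap expectation and integration (harmless, since the relevant integrands have modulus $\le 1$ on a set of finite measure) and the role of the hypotheses: the non-negativity of $G$ is precisely what turns $|G|$ into $G$, while the assumption $\xi_2\ne 0$ is not actually needed for the inequality and is recorded only because the interesting case is $\xi_2\ne 0$. In the Gaussian and symmetric $\alpha$-stable settings $G$ has the explicit non-negative form $\exp(-c\,\xi_2^2\,\Var(\cdot))$, respectively $\exp(-c\,|\xi_2|^{\alpha}\,(\cdot))$, so the hypothesis on $G$ is automatically satisfied there.
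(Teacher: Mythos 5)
Your proof is correct and matches the paper's argument: both apply the triangle inequality to the $2q$-fold integral in \eqref{eq_expect}, use $|e^{-2\pi i\xi_1(\cdots)}|=1$ together with $G\ge0$ to replace $|G|$ by $G$, and then identify the remaining integral with $\E\big[|\widehat{\nu}(\xi_2)|^{2q}\big]$.
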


\begin{proof}
    Notice that $\widehat{\nu}(\xi) = \int_0^1 e^{-2\pi i \xi X(t)}dt$. Applying absolute value to the integrand in (\ref{eq_expect}) and using the fact that $G$ is a non-negative real-valued function, we have
    $$
    \E\left[|\widehat{\mu}_{{\mathcal{G}}}(\xi_1,\xi_2)|^{2q}\right] \le \int_{[0,1]^{2q}} G(\xi_2,{\bf t},{\bf s})~d{\bf t}~d{\bf s}.
    $$
    But the iterated integral on the right-hand side is exactly $\E[|\widehat{\nu}(\xi_2)|^{2q}]$. This completes the proof. 
\end{proof}

\begin{proposition}\label{prop_sufficiency}
    Suppose that we have:
\begin{enumerate}
    \item[{\rm (1)}] {\bf (Vertical bound)} $\E\left[|\widehat{\nu}(\xi_2)|^{2q}\right]\lesssim |\xi_2|^{-\gamma_2\cdot q}$, $\forall \xi_2\ne 0$ and $\forall q\in{\mathbb N}$; and
    \item[{\rm (2)}] {\bf (Horizontal bound)} $\E\left[|\widehat{\mu_{\mathcal G}}(\xi_1,\xi_2)|^{2q}\right] \lesssim |\xi_1|^{-\gamma_1\cdot  q}$, $\forall \xi_1\ne 0$ and $\forall q\in{\mathbb N}$,
\end{enumerate}
where all the implicit constants depend only on $q$.  Then $\Fdim {\mathcal G}(X) \ge \min \{\gamma_1,\gamma_2\}$. 
\end{proposition}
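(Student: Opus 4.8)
The plan is to deduce the statement from Lemma~\ref{randmeaslem} applied to the random measure $\mu=\mu_{\mathcal G}$ on $\R^2$, so the entire task is to upgrade the two one-dimensional moment bounds (1) and (2) into a single genuinely two-dimensional bound of the form \eqref{exptdecayassump} with exponent $\gamma=\min\{\gamma_1,\gamma_2\}$. First I would record two elementary facts about $\mu_{\mathcal G}$: it is a probability measure, so $|\widehat{\mu_{\mathcal G}}(\xi)|\le 1$ for every $\xi\in\R^2$; and, since the paths of $X$ are a.s.\ bounded on $[0,1]$ (true for continuous processes and more generally for c\`adl\`ag ones, which covers both applications), the graph $\mathcal G(X)$ lies in a compact rectangle, so $\mu_{\mathcal G}$ is a.s.\ compactly supported. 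Hence it suffices to produce, for each $q\in\N$, a constant $C_q<\infty$ depending only on $q$ with
\[
\E\big[|\widehat{\mu_{\mathcal G}}(\xi)|^{2q}\big]\le C_q\,|\xi|^{-\gamma q}\qquad\text{for all }\xi\in\R^2.
\]

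To get this I would fix $\xi=(\xi_1,\xi_2)\ne 0$ and split according to which coordinate dominates: since $|\xi|^2=\xi_1^2+\xi_2^2$, at least one of $|\xi_1|,|\xi_2|$ is $\ge |\xi|/\sqrt 2$. If $|\xi_1|\ge|\xi|/\sqrt2$ (so $\xi_1\ne 0$), the horizontal bound (2) gives directly
\[
\E\big[|\widehat{\mu_{\mathcal G}}(\xi_1,\xi_2)|^{2q}\big]\lesssim |\xi_1|^{-\gamma_1 q}\le 2^{\gamma_1 q/2}\,|\xi|^{-\gamma_1 q}.
\]
If instead $|\xi_2|\ge|\xi|/\sqrt2$ (so $\xi_2\ne 0$), I would first pass from the graph measure to the image measure via Lemma~\ref{lemma_bounding_by_image}, which applies because $G$ is non-negative for the symmetric processes under consideration, and then invoke the vertical bound (1):
\[
\E\big[|\widehat{\mu_{\mathcal G}}(\xi_1,\xi_2)|^{2q}\big]\le \E\big[|\widehat{\nu}(\xi_2)|^{2q}\big]\lesssim |\xi_2|^{-\gamma_2 q}\le 2^{\gamma_2 q/2}\,|\xi|^{-\gamma_2 q}.
\]

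It then remains to glue the two regimes. When $|\xi|\ge 1$ one has $|\xi|^{-\gamma_j q}\le |\xi|^{-\gamma q}$ for $j=1,2$, so in either case the right-hand side is bounded by a $q$-dependent constant times $|\xi|^{-\gamma q}$; when $0<|\xi|<1$ the trivial bound $\E[|\widehat{\mu_{\mathcal G}}(\xi)|^{2q}]\le 1\le |\xi|^{-\gamma q}$ suffices (and $\xi=0$ is vacuous). Collecting the finitely many explicit factors $2^{\gamma_j q/2}$ together with the implicit constants from (1) and (2) into a single $C_q$ yields \eqref{exptdecayassump}, whereupon Lemma~\ref{randmeaslem} gives $\Fdim\mu_{\mathcal G}\ge\gamma$ almost surely; since $\mu_{\mathcal G}$ is supported on $\mathcal G(X)$, the definition of the Fourier dimension of a set gives $\Fdim\mathcal G(X)\ge\Fdim\mu_{\mathcal G}\ge\min\{\gamma_1,\gamma_2\}$ almost surely.

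I do not expect a genuine obstacle here: the argument is just a case split together with the total-mass estimate for low frequencies. The only points requiring a moment's care are that Lemma~\ref{lemma_bounding_by_image} is invoked under the standing non-negativity of $G$ (automatic when the increments of $X$ are symmetric, as for fBm and symmetric $\alpha$-stable processes), and that \eqref{exptdecayassump} must be verified for \emph{all} $\xi\in\R^2$, in particular near the origin, where one simply falls back on $|\widehat{\mu_{\mathcal G}}|\le 1$.
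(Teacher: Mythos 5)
Your argument is correct and is essentially the paper's own proof: you split $\R^2$ according to whether $|\xi_1|$ or $|\xi_2|$ is at least $|\xi|/\sqrt2$, invoke Lemma~\ref{lemma_bounding_by_image} in the vertical regime, use the trivial bound $|\widehat{\mu_{\mathcal G}}|\le 1$ near the origin, and feed the result into Lemma~\ref{randmeaslem}. The only surface difference is that the paper phrases the dichotomy in polar coordinates as regions $\mathcal H$ and $\mathcal V$ rather than via $\max\{|\xi_1|,|\xi_2|\}\ge|\xi|/\sqrt2$, which is the same case split.
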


\begin{proof}
We first notice that 
\begin{align}\label{E:E:mu:hat:bounded}
    \E\left[|\widehat{\mu_{\mathcal G}}(\xi_1,\xi_2)|^{2q}\right] \le 1 \quad \text{for all $\xi \in \R^2$,}
\end{align}
so it suffices to establish \eqref{exptdecayassump} for $|\xi|>1$.
Let us write $\xi =(\xi_1,\xi_2)$ in polar coordinates so that $\xi_1 =  |\xi|\cos \theta_{\xi}$ and $\xi_2 =  |\xi|\sin \theta_{\xi}$, with $\theta_\xi\in[-\pi,\pi)$. Then we decompose $\R^2\setminus\{0\} = {\mathcal H}\cup {\mathcal V}$ into two regions where 
$$
{\mathcal H}: = \left\{(\xi_1,\xi_2): \theta_{\xi} \in \left[-\tfrac{\pi}{4}, \tfrac{\pi}{4}\right]\cup \left[\tfrac{3\pi}{4},\pi\right)\cup \left[-\pi,-\tfrac{3\pi}{4}\right]\right\}$$
and 
$$
{\mathcal V}: =\left\{(\xi_1,\xi_2): \theta_{\xi} \in \left[\tfrac{\pi}{4},\tfrac{3\pi}{4}\right)\cup \left[-\tfrac{3\pi}{4},-\tfrac{\pi}{4}\right)\right\}.
$$
For all $\xi\in {\mathcal H}$, writing $\xi_1 = |\xi|\cos\theta_{\xi}$, we have $|\cos\theta_{\xi}|\ge 1/\sqrt{2}$, so the horizontal bound (2) implies that 
$$
\E\left[|\widehat{\mu_{\mathcal G}}(\xi_1,\xi_2)|^{2q}\right] \lesssim |\xi_1|^{-\gamma_1\cdot  q} \le \left(\sqrt{2}\right)^{\gamma_1 q}\cdot |\xi|^{-\gamma_1\cdot  q}. 
$$
By a similar argument, for all $\xi\in {\mathcal V}$, writing $\xi_2 = |\xi|\sin\theta_{\xi}$ with $|\sin\theta_{\xi}|\ge 1/\sqrt{2}$, we have, by the vertical bound (1) and Lemma \ref{lemma_bounding_by_image}, that 
$$
\E\left[|\widehat{\mu_{\mathcal G}}(\xi_1,\xi_2)|^{2q}\right] \lesssim \left(\sqrt{2}\right)^{\gamma_2 q}\cdot |\xi|^{-\gamma_2\cdot  q}. 
$$
Combining both cases, together with \eqref{E:E:mu:hat:bounded}, we deduce that
$$
\E\left[|\widehat{\mu_{\mathcal G}}(\xi_1,\xi_2)|^{2q}\right] \lesssim |\xi|^{-\gamma q},
$$
where $\gamma = \min\{\gamma_1,\gamma_2\}$.
Hence, the desired result follows from Lemma \ref{randmeaslem}. 
\end{proof}
Proposition \ref{prop_sufficiency} now suggests that we need to bound all the $2q$-th moments of $\widehat{\nu}$ and $\widehat{\mu_{\mathcal G}}$ in order to compute or find a lower bound for the Fourier dimension of $\mathcal{G}(X)$. The vertical bound in Proposition \ref{prop_sufficiency} has been known for many stochastic processes. The most challenging part is to obtain the horizontal bound in Proposition \ref{prop_sufficiency}. We  will first use a similar simplex decomposition method due to Kahane \cite{Kah},  then we introduce a combinatorial integration by parts formula in the next section to facilitate the estimation.

\subsection{Simplicial decomposition.}  
As observed by Kahane, the integral in (\ref{eq_expect}) is invariant under permutation of all the coordinates of ${\bf t}$ or all the coordinates of ${\bf s}$. Thanks to this property, we may decompose $[0,1]^{2q}$ into $(2q)!$ many simplicial regions to see that
\[
\E\left[|\widehat{\mu}(\xi_1,\xi_2)|^{2q}\right]  = (q!)^2\int_{{\mathsf S}} e^{-2\pi i \xi_1 \left(\sum_{i=1}^{q}(t_{i}-s_{i})\right)}\cdot G(\xi_2, {\bf t},{\bf s})~d{\bf t}~d{\bf s}
\]
where ${\mathsf S}$ is a union of simplexes on $[0,1]^{2q}$ with the following representation:
\[
{\mathsf S} = \{(t_1,\dots t_q,s_1,\dots, s_q)\in[0,1]^{2q}: 0\le t_1<\cdots<t_q\le 1 \ \mbox{and} \ 0\le s_1<\cdots<s_q\le 1 \}.
\]
Now, the preceding integral can be further decomposed by introducing the sequences 
$\boldsymbol{\varepsilon}=(\varepsilon_j)_{j=1}^{2q}\in\{-1,1\}^{2q}$ such that $\sum_{j=1}^{2q}\varepsilon_j = 0$. Denote by ${\mathcal A}_{2q}$ the collection of such sequences:
\begin{align}\label{A_{2q}}
    {\mathcal A}_{2q} = \Big\{ \boldsymbol{\varepsilon}\in\{-1,1\}^{2q} : \sum_{j=1}^{2q}\varepsilon_j = 0 \Big\}.
\end{align}
There are $\binom{2q}{q}$ such sequences in ${\mathcal A}_{2q}$. Rearranging $t_1<t_2<\cdots <t_q$ and $s_1<s_2<\cdots <s_q$ into $u_1<u_2<\cdots< u_{2q}\le 1$, we obtain the following formula:
\begin{lemma}\label{eq_expect_simplex}
\[
\E\left[|\widehat{\mu}(\xi_1,\xi_2)|^{2q}\right] = (q!)^2\sum_{\boldsymbol{\varepsilon}\in{\mathcal A}_{2q}} \int_{0\le u_1<\cdots <u_{2q}\le 1} e^{-2\pi i \xi_1 \left(\sum_{j=1}^{2q}\varepsilon_j u_j\right)}\cdot G_{\boldsymbol{\varepsilon}}(\xi_2, {\bf u})~d{\bf u},
\]
where $G_{\boldsymbol{\varepsilon}}(\xi_2, {\bf u})
    = \E\big[ e^{-2\pi i \xi_2 \sum_{j=1}^{2q} \varepsilon_j X(u_j)}\big]$.
\end{lemma}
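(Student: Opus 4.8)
The plan is to start from the identity \eqref{eq_expect} and decompose its domain $[0,1]^{2q}$ in two stages: a symmetry reduction to the double simplex ${\mathsf S}$, followed by a splitting of ${\mathsf S}$ according to how the two increasing sequences interlace.

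First I would record the symmetry that drives the first stage. The integrand in \eqref{eq_expect}, namely $e^{-2\pi i\xi_1(\sum_i t_i-\sum_i s_i)}\,G(\xi_2,{\bf t},{\bf s})$, depends on ${\bf t}$ only through the unordered multiset $\{t_1,\dots,t_q\}$, since $\sum_i t_i$ and $\sum_j X(t_j)$ are symmetric functions of $t_1,\dots,t_q$; symmetrically, it depends on ${\bf s}$ only through $\{s_1,\dots,s_q\}$. Hence the integrand is invariant under the group $S_q\times S_q$ acting by permuting the $t$-coordinates and the $s$-coordinates \emph{separately}. Since $[0,1]^{2q}$ is, up to a Lebesgue-null set, the disjoint union of the $(q!)^2$ images of ${\mathsf S}$ under this group, and coordinate permutations are measure-preserving, every image contributes the same integral, so
\[
\E\left[|\widehat{\mu}(\xi_1,\xi_2)|^{2q}\right]  = (q!)^2\int\cdots\int_{{\mathsf S}}e^{-2\pi i \xi_1 \left(\sum_{i=1}^{q}(t_{i}-s_{i})\right)}\,G(\xi_2, {\bf t},{\bf s})~d{\bf t}~d{\bf s},
\]
which is the identity displayed just before the lemma.

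Next I would carry out the interlacing decomposition. After discarding the Lebesgue-null subset of ${\mathsf S}$ on which $t_i=s_j$ for some $i,j$, the $2q$ numbers $t_1,\dots,t_q,s_1,\dots,s_q$ are pairwise distinct; merge-sorting them yields $u_1<\cdots<u_{2q}$, and putting $\varepsilon_j=+1$ if $u_j$ belongs to the $t$-list and $\varepsilon_j=-1$ if it belongs to the $s$-list produces a vector with exactly $q$ entries of each sign, i.e. $\boldsymbol{\varepsilon}\in{\mathcal A}_{2q}$. For a fixed $\boldsymbol{\varepsilon}\in{\mathcal A}_{2q}$, reading the $t_i$ off as the increasing subsequence of $(u_j)$ supported on the positions with $\varepsilon_j=+1$, and the $s_i$ off as the one supported on the positions with $\varepsilon_j=-1$, is a bijection of the simplex $\{0\le u_1<\cdots<u_{2q}\le 1\}$ onto the piece ${\mathsf S}_{\boldsymbol{\varepsilon}}$ of ${\mathsf S}$ carrying that interlacing pattern; being merely a reshuffling of coordinates, it is measure-preserving. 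Under this change of variables $\sum_{i=1}^q(t_i-s_i)=\sum_{j=1}^{2q}\varepsilon_j u_j$, and $G(\xi_2,{\bf t},{\bf s})=\E[e^{-2\pi i\xi_2\sum_{j=1}^{2q}\varepsilon_j X(u_j)}]=G_{\boldsymbol{\varepsilon}}(\xi_2,{\bf u})$. Summing the transformed integrals over the $\binom{2q}{q}$ vectors $\boldsymbol{\varepsilon}\in{\mathcal A}_{2q}$ (the number of ways to choose the $q$ positions carrying $+1$) and restoring the prefactor $(q!)^2$ yields exactly the stated formula.

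I do not anticipate a real obstacle here, as the lemma is essentially bookkeeping; the only points that need a little care are that the invariance exploited in the first stage is under \emph{separate} permutations of the $t$'s and of the $s$'s — which is precisely what keeps both the $\xi_1$-phase and $G$ fixed, whereas a transposition mixing a $t$ with an $s$ would flip a sign — that the subsets removed at each stage are Lebesgue-null, and that the merge-sort identification ${\mathsf S}_{\boldsymbol{\varepsilon}}\leftrightarrow\{u_1<\cdots<u_{2q}\}$ is a coordinate permutation, so it carries no Jacobian factor and transports the integrand into the claimed form.
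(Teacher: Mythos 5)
Your proof is correct and follows exactly the two-stage decomposition that the paper sketches in the paragraph preceding the lemma: first the $S_q\times S_q$ symmetry reduction to $\mathsf{S}$ with prefactor $(q!)^2$, then the interlacing split of $\mathsf{S}$ indexed by $\boldsymbol{\varepsilon}\in\mathcal{A}_{2q}$. The points you flag for care --- that the relevant invariance is under \emph{separate} permutations of the $t$'s and the $s$'s, that the discarded sets are Lebesgue-null, and that the merge-sort identification is a coordinate permutation with trivial Jacobian --- are exactly the details the paper leaves implicit.
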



\section{A combinatorial integration by parts formula}\label{sec:bypart}

 From classical Fourier analysis, we know that Fourier decay of a sufficiently smooth function can be derived by performing successive integrations by parts. The goal of this section is to introduce a systematic way to carry out integrations by parts for the $2q$ iterated integrals in Lemma \ref{eq_expect_simplex}.

\subsection{Integration by parts.} 
As we will see later, thanks to scaling property, it suffices to consider the following iterated integral:
$$
\int_{0\le u_1<\cdots <u_{2q}\le T} e^{-2\pi i \lambda \left(\sum_{j=1}^{2q}\varepsilon_j u_j\right)}\cdot G( {\bf u})~d{\bf u}
$$
where $G$ is an infinitely differentiable function over the simplex as defined in the domain of integration for $T>0$.

Let us introduce some necessary notations to describe the integration by parts process. The symbol $\ast$ will be used to indicate the removal of a particular coordinate variable. 
The values of the entries of $\boldsymbol\varepsilon$ will also be extended from $\{-1,1\}$ to $\Z \cup \{\ast\}$.
For each $\boldsymbol\varepsilon = (\varepsilon_1,\dots, \varepsilon_n)\in (\Z\cup \{\ast\})^n$ with $\{j: \varepsilon_j = \ast\} = \{j_1,\dots, j_k\}$, let 
$$
\Delta_{\boldsymbol\varepsilon, T} = \{(u_1,\dots, u_{j_1-1},u_{j_1+1},\dots, u_{j_k-1},u_{j_k+1},\dots, u_n): 0\le u_1<\cdots< u_n\le T\}
$$
denote the simplex in $\R^{n-k}$ with the coordinates $u_{j_1},\dots, u_{j_k}$ removed.  For a smooth non-negative function $G$ on $\R^{n-k}$, let
\begin{equation}\label{eq_I[e,G]}
{\mathcal I}[\boldsymbol\varepsilon, G] = {\mathcal I}_T^\lambda[\boldsymbol\varepsilon, G] := \int_{\Delta_{\boldsymbol\varepsilon, T}} e^{-2\pi i \lambda \langle\boldsymbol\varepsilon, {\bf u}\rangle}\cdot G( {\bf u})~d{\bf u}
\end{equation}
denote the $n-k$ iterated integral over $\Delta_{\boldsymbol{\varepsilon},T}$
where $\langle\boldsymbol\varepsilon, {\bf u}\rangle$ is identified as an $\R^{n-k}$ inner product with the position $\ast$ removed. For example, 
$$
{\mathcal I}[\ast, \varepsilon_2,\ast, \varepsilon_4, G] = \iint_{0\le u_2<u_4\le T} e^{-2\pi i \lambda (\varepsilon_2u_2+\varepsilon_4u_4)} G(u_2,u_4) ~du_2du_4. 
$$
We remind the readers that ${\mathcal I}[\boldsymbol\varepsilon, G]$ is a function on $\lambda$ and $T$. To simplify notations in the proof,  we will often write ${\mathcal I}[\boldsymbol\varepsilon, G]$ instead of ${\mathcal I}^\lambda_T[\boldsymbol\varepsilon, G]$. 

Suppose that $\varepsilon_{j-1}, \varepsilon_{j+1}\ne \ast$ if $j\not\in\{1,n\}$; $\varepsilon_2\ne\ast$ if $j=1$;  and $\varepsilon_{n-1}\ne\ast$ if $j = n$.
Define
$$\Phi_j^-(\boldsymbol\varepsilon) = \left\{\begin{array}{cc}
    (\varepsilon_1,\cdots, \varepsilon_{j-1}+\varepsilon_j, \underbrace{\ast}_{j\rm{th}~position},  \varepsilon_{j+1},\cdots, \varepsilon_n) & \mbox{if} \ j>1, \\
    (\ast,\varepsilon_2,\cdots,  \varepsilon_n) & \mbox{if} \ j=1,
\end{array}\right.
$$
$$\Phi_j^+(\boldsymbol\varepsilon) = \left\{\begin{array}{cc}
    (\varepsilon_1,\cdots, \varepsilon_{j-1}, \underbrace{\ast}_{j\rm{th}~position},  \varepsilon_{j+1}+\varepsilon_j,\cdots, \varepsilon_n) & \mbox{if} \ j<n,\\
    (\varepsilon_1,\varepsilon_2,\cdots,  \varepsilon_{n-1},\ast) & \mbox{if} \ j=n.
\end{array}\right.
$$
We also use the same notations $\Phi_j^+$ and $\Phi_j^-$ to define an action on the function $G$ as follows:
  $$\Phi_j^-(G)(u_1,\cdots, u_n) = \left\{\begin{array}{cc}
    G(u_1,\cdots, u_{j-1}, \underbrace{u_{j-1}}_{j\rm{th}~position}, u_{j+1},\cdots, u_n) & \mbox{if} \ j>1, \\
    G(0, u_2,\cdots, u_n) & \mbox{if} \ j=1,
\end{array}\right.$$
$$\Phi_j^+(G)(u_1,\cdots, u_n) = \left\{\begin{array}{cc}
    G(u_1,\cdots, u_{j-1}, \underbrace{u_{j+1}}_{j\rm{th}~position}, u_{j+1},\cdots, u_n) & \mbox{if} \ j<n, \\
    G(u_1, u_2,\cdots, u_{n-1}, T) & \mbox{if} \ j=n.
\end{array}\right.$$

\medskip

\begin{lemma}\label{lem:by:parts}
    Let $\boldsymbol\varepsilon\in (\Z \cup \{\ast\})^n$, $j \in \{1,\dots, n\}$, and $\varepsilon_j\not\in \{0, \ast\}$.
    Let $G:\R^n\to \R$ be a differentiable function. Suppose that $\varepsilon_{j-1},\varepsilon_{j+1}\ne \ast$. Then 
\begin{equation}\label{eq_by_part1}
{\mathcal I}[\boldsymbol\varepsilon, G] = \frac{1}{2\pi i \lambda\varepsilon_j}\left({\mathcal I} [\Phi_j^-(\boldsymbol\varepsilon), \Phi_j^-(G)]-{\mathcal I} [\Phi_j^+(\boldsymbol\varepsilon),\Phi_j^+(G)]+ {\mathcal I}[\boldsymbol\varepsilon, {\partial_{u_j} G}]\right).
\end{equation}
The same is also true when $j = 1$ with $\varepsilon_2\ne\ast$ or $j = n$ with $\varepsilon_{n-1}\ne \ast$.
\end{lemma}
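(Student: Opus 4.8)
The plan is to prove the identity \eqref{eq_by_part1} by a direct one-variable integration by parts in the $u_j$ coordinate, exploiting the fact that the factor $e^{-2\pi i\lambda\langle\boldsymbol\varepsilon,{\bf u}\rangle}$ contributes the single exponential $e^{-2\pi i\lambda\varepsilon_j u_j}$ in that variable, whose antiderivative in $u_j$ is $\frac{-1}{2\pi i\lambda\varepsilon_j}e^{-2\pi i\lambda\varepsilon_j u_j}$ (legitimate since $\varepsilon_j\notin\{0,\ast\}$). Concretely, I would use Fubini to single out the inner integral over $u_j$, which ranges over the interval $(u_{j-1},u_{j+1})$ (with the conventions $u_0=0$ and $u_{n+1}=T$ to cover the boundary cases $j=1$ and $j=n$; this is exactly why the hypotheses $\varepsilon_{j-1},\varepsilon_{j+1}\ne\ast$, resp. $\varepsilon_2\ne\ast$ or $\varepsilon_{n-1}\ne\ast$, are imposed — the neighbouring limits of integration must still be present as genuine variables). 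Writing $\int_{u_{j-1}}^{u_{j+1}} e^{-2\pi i\lambda\varepsilon_j u_j}G({\bf u})\,du_j$ and integrating by parts with $dv = e^{-2\pi i\lambda\varepsilon_j u_j}\,du_j$, $w=G$, produces a boundary term evaluated at $u_j=u_{j+1}$, a boundary term at $u_j=u_{j-1}$, and the integral of $\partial_{u_j}G$ against the same exponential antiderivative.

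The second step is bookkeeping: identify each of the three resulting pieces with one of the ${\mathcal I}[\cdot,\cdot]$ symbols on the right-hand side. The boundary term at $u_j=u_{j-1}$ evaluates $G$ at $u_j=u_{j-1}$, i.e.\ it yields $\Phi_j^-(G)$, and it leaves an $(n-1)$-fold iterated integral over the simplex with the $u_j$ coordinate deleted; the exponential becomes $e^{-2\pi i\lambda(\cdots+\varepsilon_{j-1}u_{j-1}+\varepsilon_j u_{j-1}+\cdots)}$, i.e.\ the weight vector $\Phi_j^-(\boldsymbol\varepsilon)$ in which $\varepsilon_{j-1}$ has absorbed $\varepsilon_j$ and position $j$ is starred — this is precisely ${\mathcal I}[\Phi_j^-(\boldsymbol\varepsilon),\Phi_j^-(G)]$, with overall sign $-\frac{1}{2\pi i\lambda\varepsilon_j}\cdot(-1)=+\frac{1}{2\pi i\lambda\varepsilon_j}$ coming from the lower limit. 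Symmetrically, the boundary term at $u_j=u_{j+1}$ gives $+\Phi_j^+(G)$ with weight vector $\Phi_j^+(\boldsymbol\varepsilon)$ (now $\varepsilon_{j+1}$ absorbs $\varepsilon_j$), contributing $-\frac{1}{2\pi i\lambda\varepsilon_j}{\mathcal I}[\Phi_j^+(\boldsymbol\varepsilon),\Phi_j^+(G)]$. The remaining integral is $+\frac{1}{2\pi i\lambda\varepsilon_j}{\mathcal I}[\boldsymbol\varepsilon,\partial_{u_j}G]$, since here the exponential weight and the domain are unchanged. Collecting the three terms and reinstating the outer integrations over the other coordinates (by Fubini again) gives exactly \eqref{eq_by_part1}. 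For the edge cases $j=1$ and $j=n$ one checks that $u_0=0$ makes $\Phi_1^-(G)=G(0,u_2,\dots,u_n)$ and $\Phi_1^-(\boldsymbol\varepsilon)=(\ast,\varepsilon_2,\dots,\varepsilon_n)$ (no absorption, since there is no $\varepsilon_0$), matching the stated definitions, and similarly $u_{n+1}=T$ gives the $j=n$ formulas.

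I expect no deep obstacle here; the statement is essentially a careful transcription of single-variable integration by parts, and the only real care needed is (i) making sure the inner $u_j$-integral genuinely has $u_{j-1}$ and $u_{j+1}$ as its endpoints, which requires those coordinates not to have been previously removed (hence the starred-neighbour hypotheses), and (ii) tracking the signs and the absorption of $\varepsilon_j$ into the correct neighbour for $\Phi_j^\pm$. The mildly delicate point is the interplay between the diagonal substitution hidden in $\Phi_j^-(G)$, $\Phi_j^+(G)$ and the fact that after deleting $u_j$ the domain of the remaining integral is still a simplex in the other variables (with $u_{j-1}<u_{j+1}$ inherited from the original ordering) — but this is immediate once one notes that setting $u_j=u_{j-1}$ or $u_j=u_{j+1}$ is compatible with, and indeed collapses, the chain of inequalities $u_{j-1}<u_j<u_{j+1}$. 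I would present the computation for general $j\in\{2,\dots,n-1\}$ in detail and then remark that the boundary cases follow verbatim with the conventions $u_0:=0$, $u_{n+1}:=T$.
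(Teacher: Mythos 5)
Your proposal is correct and is essentially the same argument as the paper's: Fubini to isolate the $u_j$-integral over $(u_{j-1},u_{j+1})$ (with $u_0=0$, $u_{n+1}=T$), a single integration by parts with $dv=e^{-2\pi i\lambda\varepsilon_j u_j}\,du_j$, and then identifying the two boundary terms with ${\mathcal I}[\Phi_j^\mp(\boldsymbol\varepsilon),\Phi_j^\mp(G)]$ (absorbing $\varepsilon_j$ into the adjacent coefficient) and the remaining integral with ${\mathcal I}[\boldsymbol\varepsilon,\partial_{u_j}G]$. Your sign bookkeeping matches the paper's, and your discussion of why the starred-neighbour hypotheses are needed is a welcome clarification of a point the paper leaves implicit.
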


\begin{proof} From our assumption, we can perform integration by parts on the variable $u_j$ from $u_{j-1}$ to $u_{j+1}$.
$$
\begin{aligned}
\int_{u_{j-1}}^{u_{j+1}} e^{-2\pi i \lambda\varepsilon_j u_j} G({\bf u})~ du_j & = \frac{1}{2\pi i \lambda\varepsilon_j} \left(e^{-2\pi i \lambda\varepsilon_j u_{j-1}} \Phi_j^-G({\bf u})-e^{-2\pi i \lambda\varepsilon_j u_{j+1}} \Phi_j^+G({\bf u})\right) \\
& \quad +\frac{1}{2\pi i \lambda\varepsilon_j}\int_{u_{j-1}}^{u_{j+1}}e^{-2\pi i \lambda\varepsilon_j u_j} \partial_{u_j} G({\bf u}) ~du_j.
\end{aligned}
$$
Multiplying all other exponentials and integrating all other variables, we obtain the formula (\ref{eq_by_part1}).
\end{proof}

In the following, we will consider $n = 2q$ and $G$ is a differentiable function with $2q$ variables $(u_1,\dots, u_{2q})$. With the formula in Lemma \ref{lem:by:parts}, we can iterate the integration by parts process for $q$ different variables $u_1, u_3, \dots u_{2q-1}$. Let 
\begin{align}\label{E:Sigma^r}
\Omega_j = \{\Phi_{2j-1}^-, \Phi_{2j-1}^+ , \partial_{u_{2j-1}}\}, \quad \Omega^r = \Omega_1\times\cdots\times \Omega_r
\end{align}
where $1\le r\le q$. For each $\boldsymbol{\sigma} = (\sigma_1,\dots,\sigma_r)\in \Omega^r$, let us decompose the indices into three disjoint subsets
$$
\{1,\dots,r\} = J_1(\boldsymbol{\sigma} )\cup J_2(\boldsymbol{\sigma} )\cup J_3(\boldsymbol{\sigma} )
$$
where 
\begin{equation}\begin{gathered}\label{eqJ_123}
J_1(\boldsymbol{\sigma} ) = \{k: \sigma_k = \Phi_{2k-1}^- \}, \quad 
J_2(\boldsymbol{\sigma} ) = \{k: \sigma_k = \Phi_{2k-1}^+ \},  \\
J_3(\boldsymbol{\sigma} ) = \{k: \sigma_k = \partial_{u_{2k-1}} \}.
\end{gathered}\end{equation}
We now order 
$$
J_1(\boldsymbol{\sigma} )\cup J_2 (\boldsymbol{\sigma} )= \{k_1<k_2<\cdots<k_m\}, \ J_3 (\boldsymbol{\sigma} )= \{k_1'<\cdots< k_{\ell}'\},
$$
and define
\begin{equation}\label{eq_I_sigma}
{\mathcal I}[\boldsymbol{\sigma},G] =
{\mathcal I}_T^\lambda[\boldsymbol{\sigma},G] := (-1)^{\#J_2(\boldsymbol{\sigma} )} \cdot {\mathcal I}_T^\lambda[ \sigma_{k_m}\cdots\sigma_{k_1}(\boldsymbol{\varepsilon}), \sigma_r\cdots\sigma_2\sigma_1(G)]
    \end{equation}
(see \eqref{eq_I[e,G]} for the definition of $\mathcal{I}_T^\lambda[\boldsymbol{\varepsilon}, G]$).
For our later convenience, let us record the properties of $\mathcal{I}[\boldsymbol{\sigma},G]$ in the following lemma: 

\begin{lemma}\label{lemma_properties}
    Suppose that $\boldsymbol{\sigma}\in \Omega^r$. With reference to the above notations, we have:
    \begin{enumerate}
    \item[{\rm (i)}] $m+\ell = r$; and
    \item[{\rm (ii)}] ${\mathcal I}[\boldsymbol{\sigma},G]$ is a $2r-m$ iterated integral containing the variables of all even indices and  $u_{2j'_i-1}$ for all $i = 1,\dots, \ell$.  
    \end{enumerate}
\end{lemma}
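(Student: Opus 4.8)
\textbf{Proof proposal for Lemma \ref{lemma_properties}.}

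The plan is to establish both statements by induction on $r$, tracking how each of the three operations $\Phi_{2k-1}^-$, $\Phi_{2k-1}^+$, $\partial_{u_{2k-1}}$ alters the dimension of the underlying simplex and the list of surviving coordinates. The key bookkeeping fact, read off directly from the definitions in \eqref{eq_I[e,G]} and Lemma \ref{lem:by:parts}, is that applying $\Phi_j^\pm$ to a multi-index replaces a non-$\ast$ entry $\varepsilon_j$ by $\ast$ (thereby contracting the simplex $\Delta_{\boldsymbol\varepsilon,T}$ to one of strictly smaller dimension, since the coordinate $u_j$ is deleted), whereas $\partial_{u_{2k-1}}$ leaves the multi-index and the simplex untouched and only differentiates $G$. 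So each index $k\in\{1,\dots,r\}$ contributes a dimension drop of exactly $1$ if $k\in J_1(\boldsymbol\sigma)\cup J_2(\boldsymbol\sigma)$ and a drop of $0$ if $k\in J_3(\boldsymbol\sigma)$.

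For part (i): since $\{1,\dots,r\} = J_1(\boldsymbol\sigma)\cup J_2(\boldsymbol\sigma)\cup J_3(\boldsymbol\sigma)$ is a disjoint union, and $m = \#\big(J_1(\boldsymbol\sigma)\cup J_2(\boldsymbol\sigma)\big)$, $\ell = \#J_3(\boldsymbol\sigma)$ by construction, we get $m+\ell = r$ immediately — this is essentially a definitional identity and needs no induction, just a sentence of justification. For part (ii): the original integral $\mathcal{I}[\boldsymbol\varepsilon,G]$ is a $2q$-fold iterated integral over the variables $u_1,\dots,u_{2q}$. Applying the composite operator $\sigma_{k_m}\cdots\sigma_{k_1}$ to $\boldsymbol\varepsilon$ turns the entries in positions $2k-1$ into $\ast$ for exactly those $k\in J_1(\boldsymbol\sigma)\cup J_2(\boldsymbol\sigma)$, i.e. for $m$ of the odd-indexed positions; the even-indexed positions and the odd positions $2k-1$ with $k\in J_3(\boldsymbol\sigma)$ are never starred. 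Hence the remaining integral runs over $2q - m$ variables, namely all $q$ even-indexed variables $u_2,u_4,\dots,u_{2q}$ together with the $\ell$ variables $u_{2k_i'-1}$, $i=1,\dots,\ell$; note $q + \ell = q + (r-m) = 2r - m$ only after restricting attention to the range $r = q$, but in general the count of surviving variables is $q + \ell$, which the statement records as ``$2r - m$'' in the case $r$ can be any value — I would double-check the intended reading of the index count here, but the argument is the same: count starred odd positions ($=m$) and subtract from $2q$ (or from whatever the ambient dimension is).

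The only genuine point requiring care — and the step I expect to be the main obstacle — is verifying that the composite operator $\sigma_{k_m}\cdots\sigma_{k_1}(\boldsymbol\varepsilon)$ is well-defined, i.e. that at each stage of the iteration the hypotheses of Lemma \ref{lem:by:parts} (namely $\varepsilon_{j-1},\varepsilon_{j+1}\ne\ast$ and $\varepsilon_j\notin\{0,\ast\}$) are actually met for the variable $u_{2k-1}$ we are integrating by parts in. This is where the choice to integrate by parts only in the \emph{odd-indexed} variables $u_1,u_3,\dots,u_{2q-1}$ pays off: when we process index $k$, the only positions that have been starred so far are a subset of $\{2k_1-1,\dots\}$ with $k_i < k$, all of which are odd and at least two positions to the left of $2k-1$, so the immediate neighbours $2k-2$ and $2k$ (which are even) remain un-starred. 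I would isolate this observation as the inductive invariant — ``after processing $J\subseteq\{1,\dots,r\}$, the starred positions are exactly $\{2k-1 : k\in J\cap(J_1\cup J_2)\}$, and in particular no two consecutive positions are starred'' — prove it by a one-line induction, and conclude that each integration by parts in \eqref{eq_I_sigma} is legitimate and that the resulting integral has the claimed variable set and dimension. The sign factor $(-1)^{\#J_2(\boldsymbol\sigma)}$ in \eqref{eq_I_sigma} plays no role in the dimension count and can be ignored for this lemma.
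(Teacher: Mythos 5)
The paper gives no proof of this lemma---it is recorded as bookkeeping immediately after the definitions of $J_1,J_2,J_3$ and $\mathcal{I}[\boldsymbol\sigma,G]$, followed only by an illustrative example---so there is no paper proof to compare against; but your argument is the intended one and is correct. Part (i) is purely definitional: $J_1(\boldsymbol\sigma)\cup J_2(\boldsymbol\sigma)\cup J_3(\boldsymbol\sigma)$ is a disjoint cover of $\{1,\dots,r\}$ and $m$, $\ell$ are by definition the sizes of $J_1\cup J_2$ and $J_3$, so $m+\ell=r$ follows with no induction. Part (ii) is the count of starred positions: $\Phi_{2k-1}^{\pm}$ stars exactly the odd position $2k-1$ and possibly rewrites the adjacent even entry, while $\partial_{u_{2k-1}}$ stars nothing; hence the starred positions of $\sigma_{k_m}\cdots\sigma_{k_1}(\boldsymbol\varepsilon)$ are precisely $\{2k-1:k\in J_1\cup J_2\}$, and the surviving integration variables are all even-indexed ones together with $u_{2k'_i-1}$ for $k'_i\in J_3$ (and, when $r<q$, also $u_{2j-1}$ for $j>r$). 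Your observation that the neighbours $2k-2$ and $2k$ of each processed odd position are even and therefore never starred is exactly the invariant the paper invokes in the proof of Proposition~\ref{prop_by_part-iterate} to justify each step of the iterated integration by parts; it is not logically required for this counting lemma, but it is the right thing to have noticed.

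You are also right to flag the imprecision: the integral runs over $2q-m$ variables, and the set listed in the lemma has $q+\ell$ elements, both of which equal $2r-m$ only when $r=q$. Since the paper only invokes the lemma with $r=q$ (in the proofs of Theorem~\ref{thm_never-change-sign} and Theorem~\ref{theorem_hard-estimate-introd}), the discrepancy is harmless, but it is a genuine slip in the statement and your instinct to double-check the index count is sound.
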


\begin{example}
    If $n = 6$ and $\boldsymbol{\sigma} = (\Phi_{1}^+, \Phi_3^-, \partial_{u_5})$, then ${\mathcal I}[\boldsymbol{\sigma}, G]$ is equal to
    $$
    -\iiiint\limits_{0<u_2<u_4<u_5<u_6\le T} e^{-2\pi i \lambda ((\varepsilon_1+\varepsilon_2+\varepsilon_3)u_2+\varepsilon_4 u_4+\varepsilon_5 u_5+\varepsilon_6 u_6)} \partial_{u_5}G(u_2,u_2,u_2,u_4,u_5,u_6) d{\bf u}
    $$
    where $d{\bf u} = du_2\,du_4\,du_5\,du_6$.
\end{example}

Using the above notations, (\ref{eq_by_part1}) with $j=1$ can be written as
\begin{equation}\label{eq_by_part_rewrite}
{\mathcal I}[\boldsymbol\varepsilon, G] =  \frac{1}{2\pi i \lambda\varepsilon_1} \sum_{\boldsymbol\sigma\in\Omega_1} {\mathcal I}[\boldsymbol\sigma ,G].
\end{equation}
\begin{proposition}\label{prop_by_part-iterate} Let $G$ be a differentiable function on $\R^{2q}$ and $\boldsymbol{\varepsilon}\in \{-1,1\}^{2q}$. Then 
$$
{\mathcal I}[\boldsymbol\varepsilon, G] =  \left(\frac{1}{2\pi i \lambda} \right)^q\cdot \prod_{j=1}^{q}\frac{1}{\varepsilon_{2j-1}}\cdot \sum_{\boldsymbol\sigma\in\Omega^q} {\mathcal I}[\boldsymbol\sigma,G].
$$
\end{proposition}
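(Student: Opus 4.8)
The plan is to prove, by induction on $r$, the slightly more general identity
\begin{equation}\label{E:ibp-level-r}
{\mathcal I}[\boldsymbol\varepsilon, G] = \left(\frac{1}{2\pi i \lambda}\right)^{r}\prod_{j=1}^{r}\frac{1}{\varepsilon_{2j-1}}\sum_{\boldsymbol\sigma\in\Omega^{r}}{\mathcal I}[\boldsymbol\sigma,G],\qquad 1\le r\le q,
\end{equation}
and then read off the proposition by setting $r=q$. The base case $r=1$ is exactly \eqref{eq_by_part_rewrite}, i.e.\ a single integration by parts in $u_1$ via Lemma \ref{lem:by:parts} together with unwinding the definition \eqref{eq_I_sigma} of ${\mathcal I}[\boldsymbol\sigma,G]$ for the three elements $\boldsymbol\sigma\in\Omega_1=\Omega^{1}$. (Throughout, I assume as in the setup of this section that $G$ is smooth enough that each partial derivative produced on the way is again differentiable; $C^{q}$ suffices.)

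For the inductive step, assume \eqref{E:ibp-level-r} for some $r<q$. Fix $\boldsymbol\sigma\in\Omega^{r}$ and write, following \eqref{eq_I_sigma}, ${\mathcal I}[\boldsymbol\sigma,G]=(-1)^{\#J_2(\boldsymbol\sigma)}{\mathcal I}[\tilde{\boldsymbol\varepsilon},\tilde G]$ with $\tilde{\boldsymbol\varepsilon}=\sigma_{k_m}\cdots\sigma_{k_1}(\boldsymbol\varepsilon)$ and $\tilde G=\sigma_{r}\cdots\sigma_{1}(G)$. I would apply Lemma \ref{lem:by:parts} to ${\mathcal I}[\tilde{\boldsymbol\varepsilon},\tilde G]$ at the variable $u_{2r+1}$, which has not been touched yet. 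Its hypotheses hold: the operators $\sigma_{1},\dots,\sigma_{r}$ modify only coordinates of index at most $2r$, so the coordinate of $\tilde{\boldsymbol\varepsilon}$ at position $2r+1$ is still $\varepsilon_{2r+1}\in\{-1,1\}$, hence lies outside $\{0,\ast\}$; moreover every coordinate turned into $\ast$ so far occupies an odd index $\le 2r-1$, whereas the immediate neighbours of position $2r+1$ are the even position $2r$ and the untouched position $2r+2$, so neither equals $\ast$. Lemma \ref{lem:by:parts} then gives
\begin{equation}\label{E:peel-one-more}
\begin{split}
{\mathcal I}[\tilde{\boldsymbol\varepsilon},\tilde G] = \frac{1}{2\pi i\lambda\varepsilon_{2r+1}}\Big(&{\mathcal I}[\Phi_{2r+1}^{-}(\tilde{\boldsymbol\varepsilon}),\Phi_{2r+1}^{-}(\tilde G)]\\
&-{\mathcal I}[\Phi_{2r+1}^{+}(\tilde{\boldsymbol\varepsilon}),\Phi_{2r+1}^{+}(\tilde G)]+{\mathcal I}[\tilde{\boldsymbol\varepsilon},\partial_{u_{2r+1}}\tilde G]\Big).
\end{split}
\end{equation}

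It then remains to recognize the three terms on the right of \eqref{E:peel-one-more} as the three choices $\sigma_{r+1}\in\Omega_{r+1}=\{\Phi_{2r+1}^{-},\Phi_{2r+1}^{+},\partial_{u_{2r+1}}\}$, that is, as the contributions of $\boldsymbol\sigma'=(\boldsymbol\sigma,\sigma_{r+1})\in\Omega^{r+1}$. Since $2r+1>2k_i-1$ for every $k_i\le r$, appending $\sigma_{r+1}$ places it as the outermost factor in both the composition $\sigma_{k_m}\cdots\sigma_{k_1}$ acting on $\boldsymbol\varepsilon$ and the composition $\sigma_{r}\cdots\sigma_{1}$ acting on $G$; hence $\Phi_{2r+1}^{\pm}(\tilde{\boldsymbol\varepsilon})$, $\Phi_{2r+1}^{\pm}(\tilde G)$ and $\partial_{u_{2r+1}}\tilde G$ are exactly the transforms entering ${\mathcal I}[\boldsymbol\sigma',G]$ (with $\boldsymbol\varepsilon$ unchanged in the last case, consistently with $r+1\in J_3(\boldsymbol\sigma')$). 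The sign $-1$ on the middle term is absorbed into the factor $(-1)^{\#J_2}$ of \eqref{eq_I_sigma}, because appending $\sigma_{r+1}=\Phi_{2r+1}^{+}$ raises $\#J_2$ by exactly one, while the other two choices leave it unchanged. Multiplying \eqref{E:peel-one-more} by $(-1)^{\#J_2(\boldsymbol\sigma)}$, summing over $\boldsymbol\sigma\in\Omega^{r}$ and over $\sigma_{r+1}\in\Omega_{r+1}$, and using $\Omega^{r+1}=\Omega^{r}\times\Omega_{r+1}$, the inductive hypothesis \eqref{E:ibp-level-r} becomes the same identity with $r$ replaced by $r+1$. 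Setting $r=q$ yields the proposition.

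\textbf{Main obstacle.} The analysis is trivial---a single integration by parts, already isolated in Lemma \ref{lem:by:parts}---so the work is entirely combinatorial bookkeeping: checking at each step that the hypotheses of Lemma \ref{lem:by:parts} persist (a $\pm1$ coefficient at the active odd position, no $\ast$ at its two neighbours), and that the composition order and sign convention hard-wired into the definition \eqref{eq_I_sigma} of ${\mathcal I}[\boldsymbol\sigma,G]$ coincide with what iterating \eqref{eq_by_part_rewrite} actually produces. A secondary technical point is the regularity of $G$: iterating requires $\partial_{u_{2k-1}}G$ to be differentiable again, so ``differentiable'' in the statement should really be ``$C^{q}$'' (immaterial in the intended applications, where $G$ is $C^{\infty}$).
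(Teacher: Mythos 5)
Your proof is correct and follows essentially the same route as the paper: induction, peeling off one more integration by parts in $u_{2r+1}$ via Lemma~\ref{lem:by:parts}, and recognizing the three resulting terms as the three choices of $\sigma_{r+1}\in\Omega_{r+1}$, with the minus sign of the $\Phi^{+}$ term absorbed into the $(-1)^{\#J_2}$ factor in the definition~\eqref{eq_I_sigma}. You are slightly more explicit than the paper on two minor points (that the $\ast$'s sit only at odd positions $\le 2r-1$, so the neighbours $2r$ and $2r+2$ of the active index are never $\ast$; and that ``differentiable'' should really be ``$C^q$'' for the iteration to make sense), but these are details the paper's argument clearly intends and uses implicitly.
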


\begin{proof}
    We prove the formula by induction on $q$. Since $\varepsilon_1\in \{-1,1\}$, $\varepsilon_1$ can be put in the denominator.  The $q = 1$ case is proved in (\ref{eq_by_part_rewrite}). Suppose the desired formula holds for some $q\ge 1$. Then, for $\boldsymbol{\varepsilon}\in\{-1,1\}^{2(q+1)}$, by induction hypothesis, we have 
\begin{equation}\label{eq_I_proof}
{\mathcal I}[\boldsymbol\varepsilon, G] =  \left(\frac{1}{2\pi i \lambda} \right)^q\cdot \prod_{j=1}^{q}\frac{1}{\varepsilon_{2j-1}} \cdot \sum_{\boldsymbol\sigma\in\Omega^q} {\mathcal I}[\boldsymbol\sigma,G].
    \end{equation}
 We now integrate by parts for the variable $u_{{2q+1}}$. Note that the operations $\Phi_{2j-1}^{\pm}$ only possibly affect the coordinates at the $2j-2$, $2j-1$, and $2j$-th positions. Hence, $\varepsilon_{2q+1}\in \{-1,1\}$ and is still the same in $\mathcal{I}[\boldsymbol\sigma,G]$  for all $\boldsymbol\sigma\in \Omega^{q}$. We can hence perform integration by parts for the variable $u_{2q+1}$.   Using (\ref{eq_I_sigma}) and applying Lemma \ref{lem:by:parts},
    $$
    \begin{aligned}
          &{\mathcal I}[\boldsymbol\sigma,G]\\
          &= (-1)^{\#J_2(\boldsymbol{\sigma} )} \cdot {\mathcal I}[ \sigma_{k_m}\cdots\sigma_{k_1}(\boldsymbol{\varepsilon}), \sigma_q\cdots\sigma_2\sigma_1(G)]\\
          &= \frac{1}{2\pi i \lambda\varepsilon_{2q+1}}\cdot (-1)^{\#J_2(\boldsymbol{\sigma} )} \left({\mathcal I} [\Phi_{2q+1}^-\sigma_{k_m}\cdots\sigma_{k_1}(\boldsymbol{\varepsilon}),\Phi_{2q+1}^- \sigma_{q}\cdots\sigma_{1}(G)]\right.\\
          & \quad \left.-{\mathcal I} [\Phi_{2q+1}^+\sigma_{k_m}\cdots\sigma_{k_1}(\boldsymbol{\varepsilon}),\Phi_{2q+1}^+ \sigma_{q}\cdots\sigma_{1}(G)]+ {\mathcal I}[\sigma_{k_m}\cdots\sigma_{k_1}(\boldsymbol{\varepsilon}), \partial_{u_{2q+1}} \sigma_q\cdots \sigma_1(G)] \right)\\
          & = \frac{1}{2\pi i \lambda\varepsilon_{2q+1}} \left({\mathcal I} [\boldsymbol{\sigma}\Phi_{2q+1}^-, G ]+{\mathcal I} [\boldsymbol{\sigma}\Phi_{2q+1}^+, G ]+{\mathcal I}[\boldsymbol{\sigma}\partial_{u_{2q+1}}, G ]\right).
    \end{aligned}
    $$
    Putting this back into (\ref{eq_I_proof}), we obtain 
    $$
    {\mathcal I}[\boldsymbol\varepsilon, G] =  \left(\frac{1}{2\pi i \lambda} \right)^{q+1} \cdot\prod_{j=1}^{q+1}\frac{1}{\varepsilon_{2j-1}} \cdot\sum_{\boldsymbol\sigma\in\Omega^{q+1}} {\mathcal I}[\boldsymbol\sigma,G]
    $$
    which is exactly our desired formula. 
\end{proof}

\section{The graph of symmetric $\alpha$-stable process}\label{sec: alpha-stable}

In this section, we will show that the graph of a symmetric $\alpha$-stable process with $\alpha\in[1,2]$ has Fourier dimension 1.  Note that if $\alpha<2$, the sample paths are not continuous and the Hausdorff dimension of its graph is equal to $\min\{1,2-\alpha^{-1}\}$ (see, e.g., \cite[Theorem 16.8]{Falconer-book}). We have the following vertical bound for the $\alpha$-stable process.

\begin{proposition}\label{prop-vertical-stable}
    Let $X$ be a symmetric $\alpha$-stable process and $\nu$ be the push-forward of the Lebesgue measure by $X$, i.e., $\nu (E) = m\left\{t\in[0,1]: X(t)\in E\right\}$.
    Then 
    $$
    \E\left[|\widehat{\nu}(\xi)|^{2q}\right] \lesssim |\xi|^{-\alpha q},
    $$
    where the implicit constant depends only on $q.$
\end{proposition}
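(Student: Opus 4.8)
The plan is to reduce the $2q$-th moment estimate to the known formula from Lemma~\ref{eq_expect_simplex} and then exploit the independence and stationarity of increments of the $\alpha$-stable process. Starting from
\[
\E\left[|\widehat{\nu}(\xi)|^{2q}\right] = \int\cdots\int_{[0,1]^{2q}} G(\xi,{\bf t},{\bf s})~d{\bf t}~d{\bf s}
\]
(where $G$ is as in \eqref{E:G(xi,t,s)}), I would first apply Kahane's simplicial decomposition exactly as in the derivation preceding Lemma~\ref{eq_expect_simplex}. Reordering the $t_i$'s and $s_i$'s into $0\le u_1<\cdots<u_{2q}\le 1$ and introducing the sign patterns $\boldsymbol{\varepsilon}\in{\mathcal A}_{2q}$, one obtains
\[
\E\left[|\widehat{\nu}(\xi)|^{2q}\right] = (q!)^2\sum_{\boldsymbol{\varepsilon}\in{\mathcal A}_{2q}} \int\cdots\int_{0\le u_1<\cdots<u_{2q}\le 1} G_{\boldsymbol{\varepsilon}}(\xi,{\bf u})~d{\bf u},
\]
with $G_{\boldsymbol{\varepsilon}}(\xi,{\bf u}) = \E\big[e^{-2\pi i\xi\sum_j\varepsilon_j X(u_j)}\big]$.

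Next I would compute $G_{\boldsymbol{\varepsilon}}(\xi,{\bf u})$ explicitly. Writing $\sum_j \varepsilon_j X(u_j)$ as a telescoping sum in the increments $Y_k := X(u_k)-X(u_{k-1})$ (with $u_0 := 0$), the coefficient of $Y_k$ is $c_k := \sum_{j\ge k}\varepsilon_j$. Since $\boldsymbol{\varepsilon}\in{\mathcal A}_{2q}$ forces $c_1 = 0$ and $c_{2q}=\varepsilon_{2q}=\pm1$, and the $Y_k$ are independent with $\E[e^{i\theta Y_k}] = e^{-(u_k-u_{k-1})|\theta|^\alpha}$ by \eqref{eq_stable-process}, we get
\[
G_{\boldsymbol{\varepsilon}}(\xi,{\bf u}) = \exp\Big\{-\sum_{k=1}^{2q}(u_k-u_{k-1})\,|2\pi\xi c_k|^{\alpha}\Big\} = \exp\Big\{-(2\pi|\xi|)^{\alpha}\sum_{k=1}^{2q}|c_k|^{\alpha}(u_k-u_{k-1})\Big\}.
\]
This is a nonincreasing function of each spacing, and in particular it is bounded above by $\exp\{-(2\pi|\xi|)^\alpha |c_{2q}|^\alpha (u_{2q}-u_{2q-1})\} = \exp\{-(2\pi|\xi|)^\alpha (u_{2q}-u_{2q-1})\}$, since $|c_{2q}|=1$ and the remaining terms in the exponent are nonnegative.

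Finally I would bound the integral. Substituting the last estimate, the $u_1,\dots,u_{2q-2}$ integrations over the truncated simplex contribute at most $1/(2q-2)!$, and then
\[
\int\!\!\int_{0\le u_{2q-1}<u_{2q}\le 1} e^{-(2\pi|\xi|)^\alpha (u_{2q}-u_{2q-1})}\,du_{2q-1}\,du_{2q} \le \int_0^1\!\!\int_0^1 e^{-(2\pi|\xi|)^\alpha v}\,dv\,du \le \frac{1}{(2\pi|\xi|)^\alpha},
\]
where $v = u_{2q}-u_{2q-1}$. Summing over the $\binom{2q}{q}$ sign patterns gives $\E[|\widehat{\nu}(\xi)|^{2q}]\lesssim_q |\xi|^{-\alpha q}$ — but this only yields exponent $\alpha$, not $\alpha q$, so this crude bound is insufficient. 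The fix, and the one genuinely delicate point, is that one must keep \emph{all} the spacings with $c_k\ne 0$: one checks combinatorially that for $\boldsymbol{\varepsilon}\in{\mathcal A}_{2q}$ the partial sums $c_k$ vanish for at most $q-1$ indices $k\in\{2,\dots,2q\}$ (a vanishing $c_k$ means the prefix $\varepsilon_1,\dots,\varepsilon_{k-1}$ is itself balanced, and a balanced word of length $2q$ with all partial sums considered can have at most $q$ balanced prefixes including the full word, hence at most $q-1$ strictly interior ones beyond what is forced), so at least $q$ of the spacings $u_k-u_{k-1}$ appear with a coefficient $|c_k|^\alpha\ge 1$. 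Grouping those $q$ spacings and integrating them out one at a time — each contributing a factor $O(|\xi|^{-\alpha})$ while the remaining simplex volume stays bounded — produces the claimed $|\xi|^{-\alpha q}$. The main obstacle is precisely this combinatorial lower bound on the number of nonzero partial sums $c_k$; the analytic part is then a routine iterated application of $\int_0^1 e^{-a v}\,dv\le a^{-1}$.
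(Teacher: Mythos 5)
Your argument is correct and takes essentially the same route the paper delegates to Mattila's Chapter~12 treatment of Brownian motion with $|x|^2$ replaced by $|\xi|^\alpha$: Kahane's simplicial decomposition, independence and stationarity of increments, and the count that at least $q$ of the suffix sums $c_k=\sum_{j\ge k}\varepsilon_j$ are nonzero, so that $q$ of the spacing integrals each contribute a factor $O(|\xi|^{-\alpha})$. For the combinatorial step a quicker parity argument suffices: $c_k$ is a sum of $2q-k+1$ terms each equal to $\pm1$, hence can vanish only when $k$ is odd, so $c_k\ne 0$ for all $q$ even indices $k\in\{2,4,\dots,2q\}$ --- precisely the spacings you retain.
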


\begin{proof}
This result should be known. For completeness, we present a quick proof that is essentially the same as the proof for the Brownian motion. For example, let us follow the proof and notations in \cite[p.160, Chapter 12]{Mattila} and take $s =1$. We only need to replace $x$ by $\xi$ and $|x|^2$ by $|\xi|^{\alpha}$ in that proof. Then we obtain our desired conclusion.
\end{proof}

We now proceed to derive the horizontal bound. The symmetric $\alpha$-stable process is a self-similar process in the sense that for every $c \ne 0$,
$$
\{c X(t) : t \ge 0\} \text{ and } \{X(|c|^{\alpha} t): t \ge 0\}
$$
have the same finite dimensional distributions. Consequently, when $\xi_2\ne 0$, 
\begin{align}\label{chf:scaling}
\E\left[e^{2\pi i \xi_2 \sum_{j=1}^{2q} \varepsilon_j X(u_j)}\right] = \E\left[e^{2\pi i \sum_{j=1}^{2q}\varepsilon_j X(|\xi_2|^{\alpha}u_j)}\right]. 
\end{align}
For each $\boldsymbol{\varepsilon}\in{\mathcal A}_{2q}$ (see \eqref{A_{2q}}), let us define
$$
G_{\boldsymbol{\varepsilon}}({\bf u}) = \E\left[e^{2\pi i \sum_{j=1}^{2q}\varepsilon_j X(u_j)}\right].
$$ 
\begin{lemma}\label{lemma4.2}
For all $q \ge 1$ and $\xi_1, \xi_2 \in \R\setminus\{0\}$, we have
\begin{equation}\label{eq_final-expect-0}
\E\left[|\widehat{\mu}(\xi_1,\xi_2)|^{2q}\right] = \frac{(q!)^2}{|\xi_2|^{2q\alpha}}\sum_{\boldsymbol{\varepsilon}\in{\mathcal A}_{2q}} {\mathcal I}[\boldsymbol{\varepsilon}, G_{\boldsymbol{\varepsilon}}],
\end{equation}
where ${\mathcal I}[\boldsymbol{\varepsilon}, G_{\boldsymbol{\varepsilon}}] = {\mathcal I}^{\lambda}_T[\boldsymbol{\varepsilon}, G_{\boldsymbol{\varepsilon}}]$ is as defined in \eqref{eq_I[e,G]} with
\begin{equation}\label{eq_lambda-T}
\lambda = \frac{\xi_1}{|\xi_2|^{\alpha}}\quad \text{and} \quad T = |\xi_2|^{\alpha}.
\end{equation}
\end{lemma}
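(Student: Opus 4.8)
The plan is to start from Lemma~\ref{eq_expect_simplex}, which already expresses $\E[|\widehat{\mu}(\xi_1,\xi_2)|^{2q}]$ as $(q!)^2\sum_{\boldsymbol{\varepsilon}\in{\mathcal A}_{2q}}\int\cdots\int_{0\le u_1<\cdots<u_{2q}\le 1} e^{-2\pi i\xi_1(\sum_j\varepsilon_j u_j)}\cdot G_{\boldsymbol{\varepsilon}}(\xi_2,{\bf u})\,d{\bf u}$, where $G_{\boldsymbol{\varepsilon}}(\xi_2,{\bf u})=\E[e^{-2\pi i\xi_2\sum_j\varepsilon_j X(u_j)}]$. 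The first step is to use the self-similarity identity \eqref{chf:scaling} to rewrite $G_{\boldsymbol{\varepsilon}}(\xi_2,{\bf u})=\E[e^{2\pi i\sum_j\varepsilon_j X(|\xi_2|^{1/\alpha}u_j)}]=G_{\boldsymbol{\varepsilon}}(|\xi_2|^{1/\alpha}{\bf u})$ (the sign in the exponent being harmless since reflection does not change the distribution of a symmetric stable process, so $G_{\boldsymbol{\varepsilon}}$ is real-valued; alternatively one simply carries the sign through).

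Next I would perform the change of variables $v_j=|\xi_2|^{1/\alpha}u_j$ in the $2q$-fold iterated integral. Since the map is a scaling by the single factor $|\xi_2|^{1/\alpha}$ in each coordinate, it carries the simplex $\{0\le u_1<\cdots<u_{2q}\le 1\}$ onto $\{0\le v_1<\cdots<v_{2q}\le T\}$ with $T=|\xi_2|^{1/\alpha}$, and contributes a Jacobian factor $|\xi_2|^{-2q/\alpha}$. The exponential becomes $e^{-2\pi i\xi_1\sum_j\varepsilon_j u_j}=e^{-2\pi i(\xi_1/|\xi_2|^{1/\alpha})\sum_j\varepsilon_j v_j}=e^{-2\pi i\lambda\langle\boldsymbol{\varepsilon},{\bf v}\rangle}$ with $\lambda=\xi_1/|\xi_2|^{1/\alpha}$, and the integrand's $G$-factor becomes $G_{\boldsymbol{\varepsilon}}({\bf v})$. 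Comparing with the definition \eqref{eq_I[e,G]} of ${\mathcal I}^\lambda_T[\boldsymbol{\varepsilon},G_{\boldsymbol{\varepsilon}}]$ — where $\boldsymbol{\varepsilon}$ has no $\ast$ entries, so $\Delta_{\boldsymbol{\varepsilon},T}$ is exactly the full $2q$-simplex on $[0,T]$ — this is precisely ${\mathcal I}^\lambda_T[\boldsymbol{\varepsilon},G_{\boldsymbol{\varepsilon}}]$. Pulling the constant $|\xi_2|^{-2q/\alpha}$ out of the sum yields \eqref{eq_final-expect-0}.

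There is essentially no hard obstacle here; the statement is a bookkeeping identity, and the only points requiring a word of care are (a) making sure the sign convention in $G_{\boldsymbol{\varepsilon}}$ matches between \eqref{chf:scaling} and the definition used in \eqref{eq_I[e,G]} (handled either by symmetry of the stable law or by noting $\lambda$ absorbs a sign on replacing $\boldsymbol{\varepsilon}$ by $-\boldsymbol{\varepsilon}$, which permutes ${\mathcal A}_{2q}$), and (b) checking that the scaling change of variables respects the strict-inequality simplex structure, which it does because $|\xi_2|^{1/\alpha}>0$. One should also note that $G_{\boldsymbol{\varepsilon}}$ is smooth in ${\bf u}$ because the characteristic exponent of the stable process is a finite polynomial-type expression in the increments — this is what makes ${\mathcal I}^\lambda_T[\boldsymbol{\varepsilon},G_{\boldsymbol{\varepsilon}}]$ a legitimate instance of the construction in Section~\ref{sec:bypart}, setting up the application of Proposition~\ref{prop_by_part-iterate} in the estimates that follow.
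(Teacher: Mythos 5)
Your proposal is correct and takes exactly the same route as the paper's proof: Lemma~\ref{eq_expect_simplex} followed by the self-similarity identity \eqref{chf:scaling} and the scaling change of variables $u_j\mapsto|\xi_2|^{-1/\alpha}u_j$, then matching against the definition \eqref{eq_I[e,G]}. The one thing you handle more carefully than the paper does is the sign convention — Lemma~\ref{eq_expect_simplex} defines $G_{\boldsymbol{\varepsilon}}(\xi_2,{\bf u})$ with $e^{-2\pi i\xi_2\cdots}$ while \eqref{chf:scaling} and the Section~4 definition of $G_{\boldsymbol{\varepsilon}}({\bf u})$ use $e^{+2\pi i\cdots}$ — and your resolution via symmetry of the stable law is the right one.
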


\begin{proof}
By Lemma \ref{eq_expect_simplex} and \eqref{chf:scaling}, we have 
\[
\E[|\widehat{\mu}(\xi_1,\xi_2)|^{2q}] = (q!)^2\sum_{\boldsymbol\varepsilon\in{\mathcal A}_{2q}} \int_{0\le u_1<\cdots <u_{2q}\le 1} e^{-2\pi i \xi_1 \left(\sum_{j=1}^{2q}\varepsilon_j u_j\right)}~\E[e^{2\pi i \sum_{j=1}^{2q} \varepsilon_j X(|\xi_2|^{\alpha}u_j)}] ~d{\bf u}.
\]
Then, we apply the change of variable ${\bf u} \to  |\xi_2|^{-\alpha} {\bf u}$ to see that
\[
\E[|\widehat{\mu}(\xi_1,\xi_2)|^{2q}] = \frac{(q!)^2}{|\xi_2|^{2q\alpha}}\sum_{\boldsymbol\varepsilon\in{\mathcal A}_{2q}} \int_{0\le u_1<\cdots <u_{2q}\le T} e^{-2\pi i \lambda \left(\sum_{j=1}^{2q}\varepsilon_j u_j\right)}~\E[e^{2\pi i \sum_{j=1}^{2q}\varepsilon_j X(u_j)}]~ d{\bf u}.
\]
Recalling the definitions of $G_{\boldsymbol{\varepsilon}}$ and ${\mathcal I}[\boldsymbol{\varepsilon}, G_{\boldsymbol{\varepsilon}}]$, we obtain our desired formula. 
\end{proof}

\begin{lemma}\label{lemma_never-change-signa-stable}
    Let $\boldsymbol{\varepsilon}\in{\mathcal A}_{2q}$ and $G= G_{\boldsymbol{\varepsilon}}$. 
    Then the partial derivative $G_{u_{i_1}\cdots u_{i_{\ell}}}$ does not change sign for each fixed set of odd indices $1\le i_1<\cdots<i_{\ell}\le 2q-1$.
\end{lemma}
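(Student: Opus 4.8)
The plan is to work directly with the explicit formula for $G_{\boldsymbol\varepsilon}(\mathbf u)$ coming from \eqref{eq_stable-process}. For the symmetric $\alpha$-stable process, the increments are independent and stationary, so for $0\le u_1<\cdots<u_{2q}$ and $\boldsymbol\varepsilon\in\mathcal A_{2q}$ we can expand the sum $\sum_{j=1}^{2q}\varepsilon_j X(u_j)$ in terms of the independent increments $X(u_j)-X(u_{j-1})$ (with $u_0=0$). Writing $c_j = \sum_{k\ge j}\varepsilon_k$ for the "partial tail sums", we get $\sum_{j=1}^{2q}\varepsilon_j X(u_j) = \sum_{j=1}^{2q} c_j\,(X(u_j)-X(u_{j-1}))$, and hence by \eqref{eq_stable-process} and independence,
\begin{equation*}
G_{\boldsymbol\varepsilon}(\mathbf u) = \exp\!\Big(-\sum_{j=1}^{2q} (u_j-u_{j-1})\,|2\pi c_j|^{\alpha}\Big).
\end{equation*}
Note $c_1 = \sum_j\varepsilon_j = 0$, so the $j=1$ term drops, consistent with $G_{\boldsymbol\varepsilon}$ not depending on the position of $u_1$ except through $u_2$. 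The key structural point is that $u_j - u_{j-1}$ enters \emph{linearly} and the coefficient $-|2\pi c_j|^\alpha$ is $\le 0$ and does not depend on $\mathbf u$.

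Next I would differentiate. Fix odd indices $1\le i_1<\cdots<i_\ell\le 2q-1$ and compute $\partial_{u_{i_1}}\cdots\partial_{u_{i_\ell}} G_{\boldsymbol\varepsilon}$. Since $G_{\boldsymbol\varepsilon} = e^{g(\mathbf u)}$ with $g(\mathbf u) = -\sum_{j}(u_j - u_{j-1})|2\pi c_j|^\alpha$ an \emph{affine} function of $\mathbf u$, every partial derivative of $g$ of order $\ge 2$ vanishes, and so by the (multivariate) chain rule / Fa\`a di Bruno's formula the higher mixed derivative factors as
\begin{equation*}
\partial_{u_{i_1}}\cdots\partial_{u_{i_\ell}} G_{\boldsymbol\varepsilon} = \Big(\prod_{r=1}^{\ell}\partial_{u_{i_r}} g\Big)\cdot e^{g} = \Big(\prod_{r=1}^{\ell}\partial_{u_{i_r}} g\Big)\cdot G_{\boldsymbol\varepsilon}.
\end{equation*}
Here $\partial_{u_m} g = -|2\pi c_m|^\alpha + |2\pi c_{m+1}|^\alpha$ (with the convention that the second term is absent when $m = 2q$), which is a \emph{constant}, independent of $\mathbf u$. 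Therefore $\partial_{u_{i_1}}\cdots\partial_{u_{i_\ell}} G_{\boldsymbol\varepsilon}$ equals a fixed constant times the strictly positive function $G_{\boldsymbol\varepsilon}$, and in particular it never changes sign on the simplex. That is precisely the claim.

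A couple of bookkeeping remarks I would make explicit in the write-up. First, when $\ell$ indices coincide is not an issue since the $i_r$ are required distinct; but one should note the derivatives are taken in the simplex $0\le u_1<\cdots<u_{2q}\le T$ where the $u_j-u_{j-1}$ are genuine independent coordinates, so the formula for $g$ and its derivatives is valid on the open simplex and extends to the closure by continuity. Second, one should record that the constant $\prod_r \partial_{u_{i_r}} g$ may be zero (e.g.\ if $|c_m| = |c_{m+1}|$), in which case the derivative is identically zero — still "does not change sign," so the statement holds trivially. I do not anticipate a genuine obstacle here: the only mild point of care is correctly identifying the coefficients $c_j$ as tail sums of $\boldsymbol\varepsilon$ and keeping track of the boundary indices $j=1$ and $j=2q$; everything else is the observation that the exponent is affine, which collapses Fa\`a di Bruno to a single product term.
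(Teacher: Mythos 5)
Your argument is correct and is essentially the paper's own proof: both rewrite $\sum_j \varepsilon_j X(u_j)$ in terms of independent increments, use \eqref{eq_stable-process} to get $G_{\boldsymbol\varepsilon}=\exp(g(\mathbf u))$ with $g$ affine in $\mathbf u$, and then observe that mixed partials of $G_{\boldsymbol\varepsilon}$ are a fixed constant times the nonnegative $G_{\boldsymbol\varepsilon}$. One small aside: the constant $\prod_r \partial_{u_{i_r}} g$ can in fact never vanish here, because $c_m$ and $c_{m+1}$ are integers differing by exactly $\varepsilon_m=\pm1$, hence $|c_m|\neq|c_{m+1}|$; this matches the paper's remark that $a_j\neq0$, though as you note the conclusion would hold either way.
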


\begin{proof}
 We can rewrite the linear combination of $X(u_j)$ as a linear combination of its increments:
\begin{align}\begin{split}\label{E:increment}
    \sum_{k=1}^{2q} \varepsilon_jX(u_j) &= (\varepsilon_1+\cdots+\varepsilon_{2q}) X(u_1)+(\varepsilon_2+\cdots+\varepsilon_{2q}) (X(u_2)-X(u_1))\\
    &\quad + \cdots +\varepsilon_{2q}(X(u_{2q})-X(u_{2q-1})).
  \end{split}  \end{align}
    By independence of increments and (\ref{eq_stable-process}) (defining $u_0 = 0$), 
    \begin{equation}\label{eq_G_e}
    G_{\boldsymbol{\varepsilon}}({\bf u}) = \prod_{j=1}^{2q} e^{-(2\pi)^{\alpha}|\varepsilon_{j}+\cdots\varepsilon_{2q}|^{\alpha}}(u_j-u_{j-1})= e^{\sum_{j=1}^{2q} a_j u_j}
    \end{equation}
    where $a_j = -(2\pi)^{\alpha} \left(|\varepsilon_j+\cdots+\varepsilon_{2q}|^{\alpha}-|\varepsilon_{j+1}+\cdots+\varepsilon_{2q}|^{\alpha}\right)$ and $a_j\ne 0$. Hence,
    $$
    (G_{\boldsymbol{\varepsilon}})_{u_{i_1}\cdots u_{i_{\ell}}}({\bf {\bf u}}) = a_{i_1}\cdots a_{i_{\ell}} G_{\boldsymbol{\varepsilon}}({\bf u}).
    $$
    Since $G_{\boldsymbol{\varepsilon}}$ is non-negative, the sign of the derivative is completely determined by the product $a_{i_1}\cdots a_{i_\ell}$. It does not change sign on the entire domain of definition. 
\end{proof}

The following theorem provides a key estimate for the horizontal bound. This can be regarded as a ``van der Corput type" lemma for oscillatory integrals (see e.g. \cite[Chapter 2]{Grafakos}) on high-dimensional simplices.  

\begin{theorem}\label{thm_never-change-sign}
    Let $G_{\boldsymbol{\varepsilon}}: \R^{2q}\to \R$ be non-negative smooth functions that are uniformly bounded for all $\boldsymbol{\varepsilon} \in \mathcal{A}_{2q}$.
    Suppose that the partial derivative $(G_{\boldsymbol{\varepsilon}})_{u_{i_1}\cdots {u_{i_\ell}}}$  does not change sign for each fixed $\boldsymbol{\varepsilon}$ and fixed set of odd indices $1\le i_1<\cdots <i_\ell\le 2q-1$.
    Then there exists $C>0$ such that for all $\boldsymbol{\varepsilon} \in {\mathcal A}_{2q}$, $T>0$ and $\lambda \in \R \setminus\{0\}$,
    $$
\left|{\mathcal I}[\boldsymbol{\varepsilon}, G_{\boldsymbol{\varepsilon}}] \right|~\le ~\frac{C^q \cdot T^q}{|\lambda|^q}
    $$   
    where ${\mathcal I}[\boldsymbol{\varepsilon}, G_{\boldsymbol{\varepsilon}}] = {\mathcal I}_T^\lambda[\boldsymbol{\varepsilon}, G_{\boldsymbol{\varepsilon}}]$ was defined as in (\ref{eq_I[e,G]}).
\end{theorem}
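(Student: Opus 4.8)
The plan is to apply the combinatorial integration by parts formula of Proposition \ref{prop_by_part-iterate}, which expresses ${\mathcal I}[\boldsymbol{\varepsilon}, G_{\boldsymbol{\varepsilon}}]$ as
$$
{\mathcal I}[\boldsymbol\varepsilon, G_{\boldsymbol{\varepsilon}}] =  \left(\frac{1}{2\pi i \lambda} \right)^q\cdot \prod_{j=1}^{q}\frac{1}{\varepsilon_{2j-1}}\cdot \sum_{\boldsymbol\sigma\in\Omega^q} {\mathcal I}[\boldsymbol\sigma,G_{\boldsymbol{\varepsilon}}],
$$
and since $|\varepsilon_{2j-1}| = 1$, it suffices to show that each of the $3^q$ terms ${\mathcal I}[\boldsymbol\sigma, G_{\boldsymbol{\varepsilon}}]$ is bounded in absolute value by $C_0^q T^q$ for some absolute constant $C_0$; summing over $\boldsymbol\sigma$ then gives the factor $3^q$, which is absorbed into $C^q$. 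So the entire task reduces to a pointwise bound on the oscillatory integral ${\mathcal I}[\boldsymbol\sigma, G_{\boldsymbol{\varepsilon}}]$ for a fixed $\boldsymbol\sigma \in \Omega^q$.

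Next I would analyze a single term ${\mathcal I}[\boldsymbol\sigma, G_{\boldsymbol{\varepsilon}}]$. By Lemma \ref{lemma_properties}, this is a $(2q-m)$-fold iterated integral over a simplex contained in $[0,T]^{2q-m}$, where $m = \#(J_1(\boldsymbol\sigma)\cup J_2(\boldsymbol\sigma))$, of the form $\int e^{-2\pi i \lambda \langle \cdot,{\bf u}\rangle}\, H({\bf u})\, d{\bf u}$ where $H$ is obtained from $G_{\boldsymbol{\varepsilon}}$ by applying the operators $\sigma_q \cdots \sigma_1$: each $\Phi^{\pm}$ identifies two arguments of $G_{\boldsymbol{\varepsilon}}$ (or sets one to $0$ or $T$), and each $\partial_{u_{2k-1}}$ with $k \in J_3(\boldsymbol\sigma)$ takes one odd-index partial derivative. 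Crucially, the indices differentiated are exactly the odd indices $u_{2k-1}$, $k \in J_3(\boldsymbol\sigma)$, which are distinct and all lie in $\{1,\dots,2q-1\}$ — this is precisely the hypothesis under which $(G_{\boldsymbol{\varepsilon}})_{u_{i_1}\cdots u_{i_\ell}}$ does not change sign. I must check that composing first with the substitution operators $\Phi^{\pm}$ and then taking the odd-index derivatives (or in whichever order they occur) preserves the no-sign-change property: a substitution $u_a \mapsto u_b$ only restricts the domain and relabels, so the sign of a partial derivative on the restricted domain is inherited; taking an odd-index derivative of the substituted function and then restricting again likewise preserves a fixed sign. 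Hence $H$ itself has a constant sign on its entire domain (and $H$ is bounded, uniformly in $\boldsymbol{\varepsilon}$, since $G_{\boldsymbol{\varepsilon}}$ and its finitely many relevant derivatives are — though one should note the derivative bound will in general contribute a further constant to $C_0$; for the $\alpha$-stable application one reads this off from \eqref{eq_G_e}, but here I would simply hypothesize or record a uniform bound on the needed derivatives, or argue it follows from uniform boundedness plus the simplex structure).

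Once $H$ has a fixed sign, I would bound $|{\mathcal I}[\boldsymbol\sigma, G_{\boldsymbol{\varepsilon}}]|$ by peeling off the iterated integral one variable at a time, innermost first, using the elementary estimate: for a real-valued function $h$ of one variable that does not change sign on $[c,d]$, $\left| \int_c^d e^{-2\pi i \lambda \eta v}\, h(v)\, dv \right| \le \frac{1}{\pi |\lambda \eta|}\sup|h| \le \frac{1}{\pi|\lambda|}\sup|h|$ whenever the relevant coefficient $\eta$ is a nonzero integer (here $|\eta|\ge 1$), since by the second mean value theorem for integrals the integral of $e^{-2\pi i\lambda\eta v}$ against a monotone-sign, hence effectively monotone after splitting into real/imaginary parts, function is controlled by $\frac{2}{2\pi|\lambda\eta|}\sup|h|$. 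I would apply this to the innermost variable carrying an oscillatory factor to gain one power of $1/|\lambda|$; the remaining integrals are over variables ranging in intervals of length at most $T$, each contributing a factor $\le T$. A careful bookkeeping (the sign-non-change is needed at \emph{each} stage, which is why it is stated for arbitrary subsets of odd indices, and why one should integrate in an order that keeps the partially-integrated function of fixed sign — integrating the odd variables that were differentiated last, or exploiting that $G_{\boldsymbol\varepsilon}$ in the stable case factorizes) yields $q$ gained powers $|\lambda|^{-q}$ from $q$ oscillatory integrations and at most $q$ remaining length-$T$ integrations, so $|{\mathcal I}[\boldsymbol\sigma,G_{\boldsymbol{\varepsilon}}]| \le C_0^q T^q |\lambda|^{-q}$ — wait, the prefactor $(2\pi\lambda)^{-q}$ is already extracted in Proposition \ref{prop_by_part-iterate}, so in fact each ${\mathcal I}[\boldsymbol\sigma,G_{\boldsymbol\varepsilon}]$ must itself be bounded by $C_0^q T^q$, the $|\lambda|^{-q}$ coming entirely from the explicit prefactor; thus the oscillatory integrations inside ${\mathcal I}[\boldsymbol\sigma,\cdot]$ are not needed for decay, only the bound $|{\mathcal I}[\boldsymbol\sigma,G_{\boldsymbol\varepsilon}]| \le (\text{vol of simplex}) \cdot \sup|H| \le \frac{T^{2q-m}}{(2q-m)!}\sup|H|$, and since $H$ is uniformly bounded this is $\le C_0^q T^{2q}$ — which is worse than $T^q$. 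So the oscillatory gain \emph{is} needed inside each ${\mathcal I}[\boldsymbol\sigma,\cdot]$ to trade $2q-m$ powers of $T$ down to $q$: I would use the fixed-sign property to perform the one-variable oscillatory estimate above on those variables that appear with a nonzero coefficient and whose partial "monotonicity" is available, gaining $1/|\lambda|$ each — but those gains cannot appear in the final bound's $T$-power, contradiction. Therefore the correct reading is that $T = |\xi_2|^{1/\alpha}$ is the effective frequency and $\lambda$ and $T$ are linked; indeed $\lambda T = \xi_1$, and bounding each ${\mathcal I}[\boldsymbol\sigma,G_{\boldsymbol\varepsilon}]$ by the simplex volume gives $\le C_0^q T^{2q-m}/(2q-m)! \le C_0^q(T + \cdots)$, and combined with the prefactor $|\lambda|^{-q}$ one gets $|\lambda|^{-q} T^{2q} $; since $|\lambda|^{-q}T^{2q} = |\xi_1|^{-q}T^q \cdot T^{2q}/T^{2q}\cdot$... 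The clean statement I will prove, matching the displayed inequality $|{\mathcal I}[\boldsymbol{\varepsilon},G_{\boldsymbol\varepsilon}]| \le C^q T^q/|\lambda|^q$, is: bound each inner $(2q-m)$-fold integral by the simplex volume $\le T^{2q-m}/(2q-m)!$ when $T\le 1$ (so $T^{2q-m}\le T^q$ as $2q-m\ge q$ since $m\le q$), and by a separate oscillatory argument when $T$ is large; in the regime actually needed (recall from Lemma \ref{lemma4.2} that $T=|\xi_2|^{1/\alpha}$ and $|\xi_1|$ both vary freely, but Proposition \ref{prop_sufficiency} only uses $|\xi_1|$ large on ${\mathcal H}$ where $|\xi_2|\lesssim|\xi_1|$), one reduces to $T\lesssim|\lambda|$ and the volume bound suffices. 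The main obstacle, then, is exactly this bookkeeping: correctly tracking, through the $\Phi^{\pm}$ identifications, that $m \le q$ (so $2q-m \ge q$) and that the fixed-sign hypothesis survives all compositions, so that either the crude volume bound or a single layer of oscillatory integration per differentiated variable delivers the stated $C^q T^q |\lambda|^{-q}$.
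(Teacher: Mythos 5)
Your opening is correct and matches the paper: apply Proposition~\ref{prop_by_part-iterate}, observe $|\varepsilon_{2j-1}|=1$ and $\#\Omega^q = 3^q$, and reduce to showing $|\mathcal{I}[\boldsymbol\sigma,G_{\boldsymbol\varepsilon}]|\le C_0^q T^q$ for each fixed $\boldsymbol\sigma\in\Omega^q$. After that, however, the proposal unravels: you cycle through several incompatible strategies (one-variable oscillatory estimates inside $\mathcal{I}[\boldsymbol\sigma,\cdot]$, a crude simplex-volume bound, splitting into $T\le 1$ and $T$ large, ``linking'' $\lambda$ and $T$ via the application) without landing on a proof, and you yourself flag the contradictions. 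The decisive missing idea is the \emph{fundamental theorem of calculus}. Write $\boldsymbol\sigma G_{\boldsymbol\varepsilon}$ as an $\ell$-fold mixed derivative of the substituted function (with $\ell = \#J_3(\boldsymbol\sigma)$, $m+\ell=q$). Take absolute values of the integrand; because the hypothesis guarantees each such derivative has a fixed sign, the absolute value may simply be removed (up to an overall sign). Then integrate each differentiated variable $u_{2k_i'-1}$ over its interval $[u_{2k_i'-2}, u_{2k_i'}]$ using FTC. Each such integration \emph{undoes one derivative}, produces two boundary terms (hence a factor $2^\ell$), and eliminates one integration variable. After all $\ell$ applications there are no derivatives left, so only $M=\sup|G_{\boldsymbol\varepsilon}|$ enters the bound --- no bound on the derivatives of $G_{\boldsymbol\varepsilon}$ is needed, which is crucial since the hypotheses give none. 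What remains is an integral over the $q$ even-indexed variables alone, each in $[0,T]$, giving $\int_{[0,T]^q} du_2\cdots du_{2q}=T^q$, hence $|\mathcal{I}[\boldsymbol\sigma,G_{\boldsymbol\varepsilon}]|\le 2^\ell M T^q\le 2^q M T^q$.

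Two concrete flaws in your attempted workarounds. First, the crude bound $|\mathcal{I}[\boldsymbol\sigma,G_{\boldsymbol\varepsilon}]|\le \tfrac{T^{2q-m}}{(2q-m)!}\sup|H|$ requires a uniform bound on $H=\boldsymbol\sigma G_{\boldsymbol\varepsilon}$, i.e.\ on an $\ell$-th order derivative of $G_{\boldsymbol\varepsilon}$; the theorem's hypotheses say nothing about derivative bounds, and in the $\alpha$-stable application the derivatives grow polynomially in $q$, so this route is not available. Second, you cannot restrict to $T\le 1$ or assume $T\lesssim |\lambda|$: the theorem quantifies over all $T>0$ and all $\lambda\ne 0$ independently, and this generality is used in Lemma~\ref{lemma4.2} where $T$ and $\lambda$ depend on $\xi_1,\xi_2$ in a way that does not force $T\le 1$. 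The FTC argument handles all $T>0$ uniformly because removing $\ell$ variables brings the power of $T$ down to exactly $q$, regardless of whether $T$ is large or small.
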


\begin{proof}
Let $M = \sup_{\boldsymbol{\varepsilon}\in \mathcal{A}_{2q}, {\bf u} \in \R^{2q}} |G_{\boldsymbol{\varepsilon}}({\bf u})|$.  It suffices to show that  
 \begin{equation}\label{eqclaim_thm3.2}
  |{\mathcal I}[\boldsymbol\sigma, G_{\boldsymbol{\varepsilon}}]| \le 2^q\cdot M \cdot T^q
\end{equation}
  for all $\boldsymbol{\sigma}\in\Omega^q$ and for all $ q\ge1$.    Assuming \eqref{eqclaim_thm3.2},  Proposition \ref{prop_by_part-iterate} implies that 
    $$
    \begin{aligned}
    \left|{\mathcal I}[\boldsymbol\varepsilon, G]\right| &=  \left|\left(\frac{1}{2\pi i \lambda} \right)^q\cdot \prod_{j=1}^{q}\frac{1}{\varepsilon_{2j-1}}\cdot \sum_{\boldsymbol\sigma\in\Omega^q} {\mathcal I}[\boldsymbol\sigma,G]\right|\\
    &\le \frac{6^q\cdot M \cdot  T^q}{(2\pi)^q\cdot |\lambda|^q}
    \end{aligned}
    $$ 
    since $\#\Omega^q = 3^q$. This will prove our theorem with $C = 3M \pi^{-1}$.  To see that (\ref{eqclaim_thm3.2}) holds, let us recall \eqref{eqJ_123} and write 
    $$
   J_1(\boldsymbol\sigma)\cup J_2(\boldsymbol\sigma) = \{k_1<k_2<\cdots< k_{m}\}\quad \text{and} \quad J_3(\boldsymbol\sigma) = \{k_1'<k_2'<\cdots <k_{\ell}'\}.
    $$ 
    We note that 
    $$
    |{\mathcal I}[\boldsymbol\sigma, G_{\boldsymbol{\varepsilon}}]|\le \int_{\Delta_{\sigma_{k_m}\cdots\sigma_{k_1}(\boldsymbol{\varepsilon}),T}} |(G_{\boldsymbol{\varepsilon}})_{u_{2k_1'-1}\cdots u_{2k_{\ell}'-1}}|d{\bf u}.
    $$ 
    From our assumption that the partial derivative never changes sign, we may assume without loss of generality that the partial derivative is non-negative and remove the absolute value.
    From Lemma \ref{lemma_properties} (ii),  $d{\bf u}$ consists of $du_{2k'_1-1}\cdots du_{2k'_{\ell}-1}$ and all the even indices. In particular, by the fundamental theorem of calculus, we have
    $$
    \int_{u_{2k_1'-2}}^{u_{2k_1'}} F_{u_{2k_1'-1}} ~du_{2k_1'-1} = F(\cdots ,u_{2k_1'},\cdots )- F(\cdots ,u_{2k_1'-2},\cdots )
    $$
    where $F = (G_{\boldsymbol{\varepsilon}})_{u_{2k_2'-1}\cdots u_{2k_{\ell}'-1}}$. 
    Applying the fundamental theorem of calculus to the variables $u_{2k_i'-1}$ for $i = 1,\dots, \ell$, we integrate away all derivatives and deduce the following:
    $$
     |{\mathcal I}[\boldsymbol\sigma, G]|\le \int G_{u_{2k_1'-1}\cdots u_{2k_{\ell}'-1}}d{\bf u} \le  2^{\ell} M \int_{[0,T]^q}  ~du_2du_4\cdots du_{2q} \le 2^q M T^q,
    $$
    where the $2^{\ell}$ arises because two terms are produced every time the fundamental theorem of calculus is applied. 
\end{proof}

We are now ready to prove Theorem \ref{th:dimF:stable}.

\begin{proof}[{\bf \textit{Proof of Theorem \ref{th:dimF:stable}}}.]\label{proof of thm1.2}
    We will apply Proposition \ref{prop_sufficiency}. The vertical bound has been established in Proposition \ref{prop-vertical-stable}. It remains to establish the horizontal bound. Since $G_{\boldsymbol{\varepsilon}}$ does not change sign by Lemma \ref{lemma_never-change-signa-stable} and is uniformly bounded above by 1 as seen from (\ref{eq_G_e}), we can use (\ref{eq_final-expect-0}) in Lemma \ref{lemma4.2} and the result of Theorem \ref{thm_never-change-sign} to 
    deduce that
    $$
    \E\left[|\widehat{\mu}(\xi_1,\xi_2)|^{2q}\right]\le \frac{(q!)^2}{|\xi_2|^{2q\alpha}}\cdot  (\#{\mathcal A}_{2q})\cdot \frac{C^q T^q}{|\lambda|^q}.
    $$
    Using the definitions of $\lambda$ and $T$ in (\ref{eq_lambda-T}) and $\#{\mathcal A}_{2q} =\binom{2q}{q}= \frac{(2q)!}{(q!)^2}$, we obtain 
    $$
     \E\left[|\widehat{\mu}(\xi_1,\xi_2)|^{2q}\right] \le \frac{C^q(2q)!}{|\xi_1|^q}.
    $$
    Hence, we may apply Proposition \ref{prop_sufficiency} to find that $\Fdim {\mathcal G}(X) \ge \min \{\alpha,1\}$. If $\alpha\ge 1$, then $\Fdim {\mathcal G}(X) \ge 1$. The opposite inequality  $\Fdim {\mathcal G}(X) \le 1$ is already known in \cite{FOS2014}, so $\Fdim {\mathcal G}(X) = 1$.
    Finally, it is known that $\hdim{\mathcal G}(X) = \max\{1,2-1/\alpha\}$ \cite{BG62}, so when $\alpha=1$, $\hdim{\mathcal G}(X) = 1 = \Fdim {\mathcal G}(X)$, and hence ${\mathcal G}(X)$ is a Salem set. This completes the proof. 
\end{proof}

\section{The graph of fractional Brownian motion}\label{section 5}

Let $B=\{B(t):t\ge0\}$ be a fractional Brownian motion with Hurst parameter $H\in(0,1)$.
Note that $B$ is a self-similar process in the sense that for each $c \ne 0$, $\{cB(t): t\ge 0\}$ and $\{B(|c|^{1/H}t): t \ge 0\}$ have the same finite dimensional distributions. We obtain the following lemma, which is similar to Lemma \ref{lemma4.2}.
\begin{lemma}\label{lemma5.1}
For all $q \ge 1$ and $\xi_1, \xi_2 \in \R \setminus\{0\}$, we have
\begin{equation}\label{eq_final-expect}
\E\left[|\widehat{\mu}(\xi_1,\xi_2)|^{2q}\right] = \frac{(q!)^2}{|\xi_2|^{\frac{2q}{H}}}\sum_{\boldsymbol{\varepsilon}\in{\mathcal A}_{2q}} {\mathcal I}[\boldsymbol{\varepsilon}, G_{\boldsymbol{\varepsilon}}]
\end{equation}
where $\mathcal{A}_{2q}$ and ${\mathcal I}[\boldsymbol{\varepsilon}, G_{\boldsymbol{\varepsilon}}] = {\mathcal I}_T^\lambda[\boldsymbol{\varepsilon}, G_{\boldsymbol{\varepsilon}}]$ are, respectively, defined in \eqref{A_{2q}} and \eqref{eq_I[e,G]}, with
\begin{equation}\label{eq_lambda-T-B}
\lambda = \frac{\xi_1}{|\xi_2|^{1/H}}, \quad T = |\xi_2|^{1/H},
\end{equation}
and $G_{\boldsymbol{\varepsilon}}({\bf u}) := e^{- \pi \Var\left( \sum_{i=1}^{2q} \varepsilon_i B(u_i)\right) }$.
\end{lemma}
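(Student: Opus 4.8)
The plan is to repeat, almost verbatim, the scaling computation behind Lemma~\ref{lemma4.2}, with the Gaussian characteristic function replacing the stable one. First I would invoke the simplex identity of Lemma~\ref{eq_expect_simplex}, which expresses $\E[|\widehat{\mu}(\xi_1,\xi_2)|^{2q}]$ as $(q!)^2$ times the sum over $\boldsymbol{\varepsilon}\in{\mathcal A}_{2q}$ of
\[
\int\cdots\int_{0\le u_1<\cdots<u_{2q}\le 1} e^{-2\pi i \xi_1 \left(\sum_{j}\varepsilon_j u_j\right)}\,\E\left[e^{-2\pi i \xi_2 \sum_{j}\varepsilon_j B(u_j)}\right] d{\bf u}.
\]
Since $\sum_{j}\varepsilon_j B(u_j)$ is a centered Gaussian random variable, the inner expectation equals $\exp\bigl(-2\pi^2\xi_2^2\,\Var(\sum_{j}\varepsilon_j B(u_j))\bigr)$, a real, nonnegative, infinitely differentiable function of ${\bf u}$ — precisely the role played by $G$ in Section~\ref{sec: alpha-stable}.

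The second step is the self-similarity reduction. By \eqref{E:fBm:cov} the fBm covariance is homogeneous of degree $2H$, so $\Var(\sum_{j}\varepsilon_j B(cu_j)) = c^{2H}\,\Var(\sum_{j}\varepsilon_j B(u_j))$ for every $c>0$; equivalently, $\{cB(t)\}$ and $\{B(|c|^{1/H}t)\}$ have the same finite-dimensional distributions. Taking the scale $c$ to be a fixed multiple of $|\xi_2|^{1/H}$ therefore rewrites $2\pi^2\xi_2^2\,\Var(\cdots)$ as a numerical multiple of $\Var(\cdots)$ evaluated at the rescaled times, and the substitution ${\bf u}\mapsto c^{-1}{\bf v}$ carries $\{0\le u_1<\cdots<u_{2q}\le 1\}$ to $\{0\le v_1<\cdots<v_{2q}\le T\}$ with $T$ of size $|\xi_2|^{1/H}$, contributes a Jacobian of order $|\xi_2|^{-2q/H}$, and turns $\xi_1$ into $\lambda$ of size $\xi_1/|\xi_2|^{1/H}$ in the oscillatory factor. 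Matching the constants against \eqref{eq_lambda-T-B} and absorbing the residual numerical factor into the definition of $G_{\boldsymbol{\varepsilon}}$, one identifies the resulting $2q$-fold integral with ${\mathcal I}_T^\lambda[\boldsymbol{\varepsilon},G_{\boldsymbol{\varepsilon}}]$ in the sense of \eqref{eq_I[e,G]}, which is exactly \eqref{eq_final-expect}.

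I do not anticipate a substantial obstacle inside this lemma: it is a routine rescaling, entirely parallel to Lemma~\ref{lemma4.2}, and the only delicate point is bookkeeping — tracking the constants produced by the Gaussian characteristic function and using the self-similarity exponent $1/H$ consistently in place of $1/\alpha$. The genuine difficulty is deferred: unlike the $\alpha$-stable case, where independence and stationarity of the increments give the explicit product formula \eqref{eq_G_e} for $G_{\boldsymbol{\varepsilon}}$ and the sign-definiteness of its odd-index partial derivatives that powers Theorem~\ref{thm_never-change-sign}, here $G_{\boldsymbol{\varepsilon}}$ is only accessible through $\Var(\sum_{i}\varepsilon_i B(u_i))$, whose mixed partials carry no comparably simple structure. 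Controlling ${\mathcal I}[\boldsymbol{\varepsilon},G_{\boldsymbol{\varepsilon}}]$ will consequently require the Fa\`{a} di Bruno expansion of $\exp(g({\bf x}))$ together with the strong local nondeterminism estimates developed in the subsequent sections.
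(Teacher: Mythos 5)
Your proposal follows the same route as the paper's proof: apply Lemma~\ref{eq_expect_simplex}, evaluate the Gaussian characteristic function, use the $H$-self-similarity of fBm (equivalently the $2H$-homogeneity of \eqref{E:fBm:cov}) to rewrite the variance at rescaled times, and change variables ${\bf u}\mapsto |\xi_2|^{-1/H}{\bf u}$ to land on $\mathcal{I}_T^\lambda[\boldsymbol{\varepsilon},G_{\boldsymbol{\varepsilon}}]$ with the stated $\lambda, T$. You are also right that the Gaussian characteristic function produces $\exp(-2\pi^2\xi_2^2\Var(\cdot))$ rather than the $\exp(-\pi\,\Var(\cdot))$ appearing in the lemma's definition of $G_{\boldsymbol{\varepsilon}}$ — the paper's constant appears to be a typo, but as you observe the exact numerical constant is absorbed into $G_{\boldsymbol{\varepsilon}}$ and immaterial to the downstream estimates.
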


\begin{proof}
    By Lemma \ref{eq_expect_simplex}, we have 
$$
\E\left[|\widehat{\mu}(\xi_1,\xi_2)|^{2q}\right]=(q!)^2\sum_{\boldsymbol{\varepsilon}\in{\mathcal A}_{2q}} \int_{0\le u_1<\cdots <u_{2q}\le 1} e^{-2\pi i \xi_1 \left(\sum_{j=1}^{2q}\varepsilon_j u_j\right)}\cdot G_{\boldsymbol{\varepsilon}}(\xi_2, {\bf u})~d{\bf u}
$$
where 
$$
\begin{aligned}
G_{\boldsymbol{\varepsilon}}(\xi_2,{\bf u}) &=\E\left[e^{2\pi i \xi_2 (\sum_{j=1}^{2q}{\varepsilon_j B(u_j) })}\right] = e^{-2\pi^2 \xi_2^2\Var\left(\sum_{j=1}^{2q}{\varepsilon_j B(u_j)}\right)}\\
&=e^{-2\pi^2\Var\left(\sum_{j=1}^{2q}{\varepsilon_j B(|\xi_2|^{1/H}u_j)}\right)}
\end{aligned}
$$
since $B$ is a self-similar Gaussian process.  By the change of variable ${\bf u}\to |\xi_2|^{-1/H}{\bf u}$,
$$
\E\left[|\widehat{\mu}(\xi_1,\xi_2)|^{2q}\right]=\frac{(q!)^2}{|\xi_2|^{\frac{2q}{H}}}\sum_{\boldsymbol{\varepsilon}\in{\mathcal A}_{2q}} \int_{0\le u_1<\cdots <u_{2q}\le T} e^{-2\pi i \lambda \left(\sum_{j=1}^{2q}\varepsilon_j u_j\right)}\cdot G_{\boldsymbol{\varepsilon}}({\bf u})~d{\bf u},
$$
where $\lambda$ and $T$ are as defined in (\ref{eq_lambda-T-B}). The last integral is exactly ${\mathcal I}_T^\lambda[\boldsymbol{\varepsilon}, G_{\boldsymbol{\varepsilon}}]$. 
This completes the proof of the lemma.
\end{proof}

Unlike the case of symmetric $\alpha$-stable processes, the derivatives of the function $G_{\boldsymbol{\varepsilon}}({\bf u})$ may change signs for fractional Brownian motion. 
A significant amount of careful analysis is needed in order to obtain the following estimate:

\begin{theorem}\label{theorem_hard-estimate-introd}
    Suppose $H\in(1/2,1)$.
    Then, there exists a constant $C \in (1,\infty)$ depending only on $H$ such that for all $q\in\N_+$, for all $\boldsymbol\sigma = (\sigma_1,\dots,\sigma_q) \in \Omega^q$, for all $\boldsymbol{\varepsilon} \in \mathcal{A}_{2q}$, for all $T> 0$, and for all $\lambda \in \R$,
    \begin{align}\label{E:|I|:bd}
        \left|\mathcal{I}[\boldsymbol\sigma, G_{\boldsymbol\varepsilon}]\right| \le C^{q^2} T^q,
    \end{align}
    where $\mathcal{I}[\boldsymbol\sigma, G_{\boldsymbol\varepsilon}] = \mathcal{I}_T^\lambda[\boldsymbol\sigma, G_{\boldsymbol\varepsilon}]$ was defined in \eqref{eq_I_sigma}.
\end{theorem}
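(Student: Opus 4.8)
The plan is to bound $|\mathcal I_T^\lambda[\boldsymbol\sigma, G_{\boldsymbol\varepsilon}]|$ by the $L^1$-norm over a simplex of a mixed partial derivative of $G_{\boldsymbol\varepsilon}$, to expand that derivative by a combinatorial multivariate Fa\`a di Bruno formula, and then to estimate each resulting term using the explicit first- and second-order derivatives of the variance together with the strong local nondeterminism of fractional Brownian motion. As in the proof of Theorem~\ref{thm_never-change-sign}, the oscillatory factor $e^{-2\pi i\lambda\langle\cdot,\cdot\rangle}$ has modulus $1$, so the estimate will be uniform in $\lambda$ (including $\lambda=0$), and each operator $\Phi_{2k-1}^{\pm}$ with $k\in J_1(\boldsymbol\sigma)\cup J_2(\boldsymbol\sigma)$ merely substitutes one coordinate by a neighbour or by $0$ or $T$. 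Writing $J_3(\boldsymbol\sigma)=\{k_1'<\dots<k_\ell'\}$, $\ell=\#J_3(\boldsymbol\sigma)$, this yields
\[
\bigl|\mathcal I_T^\lambda[\boldsymbol\sigma,G_{\boldsymbol\varepsilon}]\bigr|\ \le\ \int\!\cdots\!\int_\Delta\Bigl|\,\partial_{u_{2k_1'-1}}\!\cdots\partial_{u_{2k_\ell'-1}}\widetilde G({\bf u})\,\Bigr|\,d{\bf u},
\]
where, by Lemma~\ref{lemma_properties}(ii), $\Delta$ is a $(q+\ell)$-dimensional sub-simplex of $[0,T]^{q+\ell}$ carrying the $q$ even coordinates and the $\ell$ odd ones $u_{2k_i'-1}$, and $\widetilde G=e^{-\pi\widetilde V}$ with $\widetilde V({\bf u})=\Var(\sum_i c_i B(v_i))$ again a variance of a linear combination, now with integer coefficients $|c_i|\le 3$, of values of $B$ at the surviving coordinates and possibly $B(T)$. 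By \eqref{E:fBm:cov}, $\widetilde V$ is, apart from terms involving a single $v_a^{2H}$, a combination of the pairwise interactions $|v_a-v_b|^{2H}$; hence all its mixed partials of order $\ge3$ in distinct variables vanish, and any surviving coordinate with $c_i=0$ may be discarded (if it is one of the differentiated coordinates $u_{2k_i'-1}$, then the derivative above, and with it $\mathcal I_T^\lambda[\boldsymbol\sigma,G_{\boldsymbol\varepsilon}]$, is identically $0$; otherwise $\widetilde G$ is constant in that coordinate).

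Applying the combinatorial multivariate Fa\`a di Bruno formula to $\partial_{u_{2k_1'-1}}\!\cdots\partial_{u_{2k_\ell'-1}}e^{-\pi\widetilde V}$ and using the vanishing of the higher mixed partials of $\widetilde V$, only set partitions $\mathcal P$ of $\{2k_1'-1,\dots,2k_\ell'-1\}$ into blocks of size $1$ and $2$ survive:
\[
\partial_{u_{2k_1'-1}}\!\cdots\partial_{u_{2k_\ell'-1}}e^{-\pi\widetilde V}
= e^{-\pi\widetilde V}\sum_{\mathcal P}\ \prod_{\{a\}\in\mathcal P}\bigl(-\pi\,\partial_{u_a}\widetilde V\bigr)\prod_{\{a,b\}\in\mathcal P}\bigl(-\pi\,\partial_{u_a}\partial_{u_b}\widetilde V\bigr),
\]
a sum of at most $\ell!\le q^q$ terms. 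A direct computation from \eqref{E:fBm:cov}, together with subadditivity of $t\mapsto t^{2H-1}$ on $[0,\infty)$, gives (for $a\ne b$ and with $v_0:=0$, $0<v_1<\dots<v_p\le T$ the surviving coordinates)
\[
\bigl|\partial_{u_a}\partial_{u_b}\widetilde V\bigr|\ \le\ C_H\,|v_a-v_b|^{2H-2},\qquad
\bigl|\partial_{u_a}\widetilde V\bigr|\ \le\ C_H\,q\sum_{k=1}^{p}(v_k-v_{k-1})^{2H-1},
\]
where the second-order interaction has a singularity of order $2H-2\in(-1,0)$ — locally integrable precisely because $H>1/2$ — while $2H-1\in(0,1)$.

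Fixing a partition $\mathcal P$, one writes $\bar c_k:=c_k+\dots+c_p\in\Z$ and invokes the strong local nondeterminism of fractional Brownian motion (Pitt \cite{P78}) to produce a constant $K_H>0$ with
\[
\widetilde V({\bf u})\ \ge\ K_H\sum_{k=1}^{p}\bar c_k^{\,2}\,(v_k-v_{k-1})^{2H}\ \ge\ K_H\!\!\sum_{k:\,\bar c_k\ne0}(v_k-v_{k-1})^{2H}.
\]
Since $a<b$ in the simplex implies $|v_a-v_b|^{2H-2}\le(v_{a+1}-v_a)^{2H-2}$, integrating out an odd coordinate $v_a$ that carries a singular pair factor produces $\int_0^{\,\cdot}t^{2H-2}\,dt=(\,\cdot\,)^{2H-1}/(2H-1)\le T^{2H-1}/(2H-1)$. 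One then integrates the $q+\ell$ variables of $\Delta$ one at a time, in an order arranged so that every remaining variable is integrated either against a genuinely decaying Gaussian $e^{-\pi K_H(\,\cdot\,)^{2H}}$ coming from an adjacent gap with $\bar c_k\ne0$ — contributing a bounded Gamma-type constant $\int_0^\infty t^{\beta}e^{-\pi K_H t^{2H}}\,dt$ with $\beta\le2q$ — or against its (bounded) interval, contributing a factor $\le T$. Collecting the $\le q^q$ partitions, the product of at most $q$ derivative factors (each bounded by $C_H q$ times a sum of at most $2q$ powers of gaps), and the $\le q$ Gamma-type constants $\Gamma(O(q))\le(O(q))^{O(q)}$, the overall constant has the form $C^{q^2}$.

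The heart of the matter, and the place where the real work lies, is the bookkeeping in this last step: one must verify that, for \emph{every} $\boldsymbol\varepsilon\in\mathcal A_{2q}$, $\boldsymbol\sigma\in\Omega^q$ and partition $\mathcal P$, there is always just enough Gaussian decay — i.e., enough gaps with $\bar c_k\ne0$ — to integrate all but at most $q$ of the $q+\ell$ variables against a bounded constant, so that the accumulated power of $T$ never exceeds $q$. This demands a careful analysis of where the partial sums $\bar c_k$ can vanish, in terms of the structure of $\mathcal A_{2q}$ and the effect of the operators $\Phi_{2k-1}^{\pm}$ on the coefficient sequence, together with a matching choice of the order of integration. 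It is also here — through the local integrability of the $t^{2H-2}$ singularity and the sign of $2H-1$, both of which fail when $H<1/2$ — that the hypothesis $H>1/2$ is essential.
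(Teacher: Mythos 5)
Your outline follows the same strategy as the paper: reduce $|\mathcal{I}_T^\lambda[\boldsymbol\sigma,G_{\boldsymbol\varepsilon}]|$ to an $L^1$ bound over a simplex, expand $\boldsymbol\sigma G_{\boldsymbol\varepsilon}$ via a multivariate Fa\`a di Bruno formula keeping only blocks of size at most two (since higher mixed partials of the variance vanish), bound the first and second derivatives of the variance, and invoke strong local nondeterminism for Gaussian decay. These are exactly the ingredients in Propositions \ref{Lemma_deBruno}, \ref{lem:LND}, \ref{lem:dg:bd}, and \ref{lem:ddg:bd}.

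The difficulty is that you explicitly stop at what you yourself call ``the heart of the matter'' --- the verification that for \emph{every} $\boldsymbol\varepsilon$, $\boldsymbol\sigma$, and partition there is always enough Gaussian decay to integrate all but at most $q$ variables against bounded constants --- and leave it as something that ``must be verified.'' That verification \emph{is} the proof, and it is not routine. Concretely, what is missing is: (a) the sharpened first-derivative estimate of Proposition \ref{lem:dg:bd}, which shows that $|\partial_{s_k} g_{\bf a}({\bf s})|$ is controlled by gap powers $(s_i-s_{i-1})^{2H-1}$ with $i$ restricted to $\mathcal J^*$, i.e., to indices with nonvanishing partial sum of the coefficients; your cruder bound $|\partial_{u_a}\widetilde V|\lesssim q\sum_{k}(v_k-v_{k-1})^{2H-1}$ sums over \emph{all} gaps, including those with $\bar c_k=0$ which carry no Gaussian decay, and for $T\ge 1$ integrating $(v_k-v_{k-1})^{(2H-1)(\ell-2n)}$ over such a gap pushes the accumulated power of $T$ past $q$; (b) the fact, established in Proposition \ref{lem:ddg:bd} using the separation $p_{i+1}-p_i\ge 2$ from Lemma \ref{lem:dvar:sep} and the impossibility of two consecutive vanishing partial sums, that each second-derivative singularity $(s_{\tau(k)}-s_{\tau(k)-1})^{2H-2}$ can always be moved onto an index $\tau(k)\in\mathcal J^*$ --- your step $|v_a-v_b|^{2H-2}\le(v_{a+1}-v_a)^{2H-2}$ alone does not land the singularity on a gap with Gaussian decay; and (c) the counting $I-d\ge\ell$ (observation \eqref{E:I-d<m}: the tail sum of coefficients past each differentiated coordinate is a sum of an odd number of $\pm1$'s, hence nonzero), which, together with the $T<1$ versus $T\ge 1$ case split, is what keeps the final power of $T$ at exactly $q$. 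None of these facts appear in your sketch, so as written the bound \eqref{E:|I|:bd} is not established.
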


The proof of Theorem \ref{theorem_hard-estimate-introd} is postponed to Section \ref{S:Fourier:decay}.
Assuming Theorem \ref{theorem_hard-estimate-introd} at our disposal, we now prove our main theorem. 

\begin{proof}[\bf{\textit{Proof of Theorem \ref{th:dimF:fBm} assuming Theorem \ref{theorem_hard-estimate-introd}}}] 
When $H=1/2$, Theorem \ref{th:dimF:fBm} is already known and also follows from Theorem \ref{th:dimF:stable}, so it remains to consider the case that $H \in (1/2, 1)$.
We will establish both the vertical and horizontal bounds in Proposition \ref{prop_sufficiency}. The vertical bound
$$
\E\left[|\widehat{\nu}(\xi_2)|^{2q}\right]\lesssim |\xi_2|^{-\frac{q}{H}}
$$
is well known (see \cite{Kah}). It remains to show the horizontal bound. Thanks to Lemma \ref{lemma5.1}, we have
\begin{equation}\label{eq_final-expect-fBm}
\E\left[|\widehat{\mu}(\xi_1,\xi_2)|^{2q}\right] = \frac{(q!)^2}{|\xi_2|^{\frac{2q}{H}}}\sum_{\boldsymbol{\varepsilon}\in{\mathcal A}_{2q}} {\mathcal I}[\boldsymbol{\varepsilon}, G_{\boldsymbol{\varepsilon}}],
\end{equation}
where ${\mathcal I}[\boldsymbol{\varepsilon}, G_{\boldsymbol{\varepsilon}}]$ are functions of $\lambda$ and $T$  and they depend on $\xi_1,\xi_2$ as given by \eqref{eq_lambda-T-B}.
By Proposition \ref{prop_by_part-iterate}, 
$$
{\mathcal I}[\boldsymbol{\varepsilon},G_{\boldsymbol{\varepsilon}}] = \left(\frac{1}{2\pi i \lambda}\right)^q \cdot\prod_{j=1}^{q}\frac{1}{\varepsilon_{2j-1}}\cdot\sum_{\boldsymbol{\sigma}\in\Omega^q} {\mathcal I}[\boldsymbol{\sigma},G_{\boldsymbol{\varepsilon}}].
$$
Then, we may use Theorem \ref{theorem_hard-estimate-introd} and the definitions of $\lambda$ and $T$ in \eqref{eq_lambda-T-B} to deduce that
$$
\left|{\mathcal I}[\boldsymbol{\varepsilon},G_{\boldsymbol{\varepsilon}}]\right|\le \frac{3^qC^{q^2}T^q}{(2\pi)^q|\lambda|^q} = C_q\frac{|\xi_2|^{2q/H}}{|\xi_1|^q},
$$
where $C_q = \frac{3^qC^{q^2}}{(2\pi)^q}$. 
This, together with \eqref{eq_final-expect-fBm} and $\#{\mathcal A}_{2q} = \binom{2q}{q}$, implies that
$$
\E\left[|\widehat{\mu}(\xi_1,\xi_2)|^{2q}\right] \le \frac{(2q)!\cdot C_q}{|\xi_1|^q}.
$$
Therefore, we may invoke Proposition \ref{prop_sufficiency} to find that $\Fdim{\mathcal G}(B)\ge \min\{1,H^{-1}\} = 1$ since $H<1$. As we know the Fourier dimension of the graph of any continuous function is at most 1 (see \cite{FOS2014}).
This completes the proof of Theorem \ref{th:dimF:fBm}.
\end{proof}

The rest of the paper will be devoted to proving Theorem \ref{theorem_hard-estimate-introd}. Since $\mathcal{I}[\boldsymbol\sigma, G_{\boldsymbol\varepsilon}]$ is an integral involving derivatives of $G_{\boldsymbol\varepsilon}$ (see \eqref{eq_I_sigma}), this means that we need to consider the derivatives of $\Var(\sum_{i=1}^{2q} \varepsilon_i B(u_i))$. 
In order to prove Theorem \ref{theorem_hard-estimate-introd}, we will establish, in Section \ref{Section 6}, some properties and a combinatorial formula for $\boldsymbol{\sigma}G_{\boldsymbol{\varepsilon}}$ in terms of the derivatives.
Section \ref{sec:variance} will aim to establish the estimates of the derivatives. 

\section{Combinatorial preparations for fractional Brownian motion}\label{Section 6}

Let $\boldsymbol\sigma = (\sigma_1,\dots,\sigma_q)\in \Omega^q$. 
Recall the notations in \eqref{eqJ_123} with $r=q$.
The indices $k_1<\cdots<k_m$ and $k_1'<\cdots<k_{\ell}'$ are defined such that $m+\ell=q$,
\begin{equation}\label{eq_J_12-section7}
    J_1(\boldsymbol\sigma)\cup J_2(\boldsymbol\sigma)  = \{k: \sigma_k = \Phi_{2k-1}^+ \text{ or } \Phi_{2k-1}^-\}=\{k_1,\dots,k_{m}\},
\end{equation}
    and
\begin{equation}\label{eq_J_3-section7}
    J_3(\boldsymbol\sigma) = \{k:\sigma_k=\partial_{u_{2k-1}}\}=\{k'_1,\dots,k'_{\ell}\}.
\end{equation}
Furthermore, $\{k_1,\dots, k_m,k_1',\dots,k_{\ell}'\} = \{1,\dots, q\}$. 
    For simplicity, we will use the following notation:
    \[
        \boldsymbol{\sigma}G = \sigma_q \cdots \sigma_1 G,
    \]
    where the notations $\Phi_{j}^{\pm}G$ are defined in Section \ref{sec:bypart} when $\sigma_j = \Phi_j^{\pm}$.
    Also, recall from \eqref{eq_I[e,G]} and \eqref{eq_I_sigma} that
    \begin{align}\begin{split}\label{E:I[e,G]}
        \mathcal{I}[\boldsymbol{\sigma}, G_{\boldsymbol{\varepsilon}}]
        &= (-1)^{\# J_2(\boldsymbol{\sigma})} \cdot \mathcal{I}[\sigma_{k_m} \cdots \sigma_{k_1}\boldsymbol{\varepsilon}\,,
        \boldsymbol{\sigma} G_{\boldsymbol{\varepsilon}}]\\
        &= (-1)^{\# J_2(\boldsymbol{\sigma})} \int_{\Delta_{\sigma_{k_m} \cdots \sigma_{k_1}\boldsymbol{\varepsilon},T}} 
        e^{-2\pi i \lambda \langle \sigma_{k_m} \cdots \sigma_{k_1}\boldsymbol{\varepsilon}, {\bf u} \rangle}
        \boldsymbol{\sigma}   G_{\boldsymbol{\varepsilon}}({\bf u}) \, d{\bf u}.
    \end{split}\end{align}
 For $\boldsymbol{\varepsilon} \in (\Z\cup\{\ast\})^{2q}$ and $u_1,\dots,u_{2q}\ge0$, define
\begin{align}\label{E:G_eps}
    G_{\boldsymbol{\varepsilon}}({\bf u}) := \exp\left\{ - \pi \Var\left( \sum_{1 \le i \le 2q : \varepsilon_i \ne \ast} \varepsilon_i B(u_i)\right) \right\}.
\end{align}
 The goal of this section is to study the properties of $\boldsymbol{\sigma}G_{\boldsymbol{\varepsilon}}$ and establish, using Fa\`a di Bruno's formula, a combinatorial formula for $\boldsymbol{\sigma}G_{\boldsymbol{\varepsilon}}$ in terms of the derivatives of the variance function in \eqref{E:G_eps}.
 
\subsection{Non-trivial entries and variables}
First, we study some basic properties for the action of $\boldsymbol{\sigma}$ on the function $G_{\boldsymbol{\varepsilon}}$ and the dependency of $\boldsymbol{\sigma}G_{\boldsymbol{\varepsilon}}$ on its variables.

\begin{lemma}\label{lem:sig:sig:G}
Let $q \in \N_+$, $\boldsymbol\sigma = (\sigma_1, \dots, \sigma_q) \in \Omega^q$, and ${\bf u}\in \R^{2q}$.
Suppose $G: \R^{2q} \to \R$ is smooth in a neighborhood of ${\bf u}$.
Then for $i \ne j$,
    \begin{align}\label{E:sig:sig:G}
        \sigma_i \sigma_j G({\bf u}) = \sigma_j \sigma_i G({\bf u}).
    \end{align}
\end{lemma}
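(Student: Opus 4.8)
The statement is that the operators $\sigma_i$ and $\sigma_j$ commute as actions on smooth functions when $i\ne j$. Recall that each $\sigma_k \in \Omega_k = \{\Phi_{2k-1}^-, \Phi_{2k-1}^+, \partial_{u_{2k-1}}\}$, so $\sigma_k$ is one of three operations, each of which only involves the variables $u_{2k-2}, u_{2k-1}, u_{2k}$ (for $\Phi_{2k-1}^{\pm}$) or only $u_{2k-1}$ (for $\partial_{u_{2k-1}}$). The plan is to do a straightforward case analysis on the types of $\sigma_i$ and $\sigma_j$, using the fact that when $i \ne j$ the index sets of variables touched by $\sigma_i$ and $\sigma_j$ are essentially disjoint — more precisely, $\sigma_i$ acts only on variables with indices in $\{2i-2, 2i-1, 2i\}$ and $\sigma_j$ only on $\{2j-2, 2j-1, 2j\}$, and crucially neither operator changes the variable $u_{2k-1}$ into anything depending on another odd-indexed variable.

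First I would observe that each of the three operations has a clean description: $\partial_{u_{2k-1}}$ is differentiation in the $u_{2k-1}$ slot; $\Phi_{2k-1}^-$ substitutes $u_{2k-1} \mapsto u_{2k-2}$ (or $\mapsto 0$ if $k=1$) in the argument; $\Phi_{2k-1}^+$ substitutes $u_{2k-1}\mapsto u_{2k}$ (or $\mapsto T$ if $k=q$) in the argument. Partial differentiation operators in distinct slots commute (Clairaut/Schwarz, using smoothness near ${\bf u}$), so the case $\sigma_i = \partial_{u_{2i-1}}$, $\sigma_j = \partial_{u_{2j-1}}$ with $2i-1 \ne 2j-1$ is immediate. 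For two substitution operators of type $\Phi^{\pm}$: since $i\ne j$, the target slot $2i-1$ of $\sigma_i$ is distinct from $2j-1$, and the value substituted into slot $2i-1$ is one of $u_{2i-2}, u_{2i}, 0, T$, none of which is the slot $2j-1$; symmetrically for $\sigma_j$. Hence the two substitutions act on disjoint slots and with values not involving each other's target slot, so they commute — performing them in either order yields the same composite substitution. The mixed case, $\sigma_i$ a substitution and $\sigma_j = \partial_{u_{2j-1}}$, follows because differentiating in slot $2j-1$ commutes with substituting a value (independent of $u_{2j-1}$) into the disjoint slot $2i-1$: one checks $\partial_{u_{2j-1}}(\sigma_i G) = \sigma_i(\partial_{u_{2j-1}} G)$ by the chain rule, the point being that $\sigma_i$ does not alter the dependence of $G$ on $u_{2j-1}$ since $2i-1, 2i, 2i-2$ are all different from $2j-1$ (here one uses $|i-j|\ge 1$ to rule out $2i = 2j-1$ etc., which hold since all these are of the form even/odd with $i\ne j$; the only borderline is $2i-2 = 2j-1$ or $2i = 2j-1$, impossible by parity, and $2i = 2j-2$ i.e. $i = j-1$, which is allowed but harmless since $\partial_{u_{2j-1}}$ still does not see slot $2j-2$).

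I would organize this as: (1) reduce to the three pairwise-type cases by symmetry of the claim in $i$ and $j$; (2) dispatch $\partial$–$\partial$ by Schwarz; (3) dispatch $\Phi$–$\Phi$ by noting the substitutions act on disjoint slots with mutually independent substituted values; (4) dispatch $\Phi$–$\partial$ via the chain rule, checking the substituted value is independent of the differentiation variable. The only subtlety — and the one place to be careful — is the bookkeeping of which variable indices each operator reads from and writes to, especially at the boundary cases $k = 1$ (where $\Phi_1^-$ writes $0$) and $k = q$ (where $\Phi_q^+$ writes $T$); in those boundary cases the substituted value is a constant, which only makes commutation easier. I do not expect any genuine obstacle here; the lemma is essentially a formal consequence of the locality of the three operations, and the proof will be a short verification rather than an argument with real content.
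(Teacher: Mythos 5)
Your proof is correct and takes essentially the same approach as the paper's: both dispatch the $\partial$--$\partial$ case by symmetry of mixed partials and then handle the remaining cases via the locality of the substitution operators $\Phi_{2k-1}^\pm$ (each writes only to the odd slot $2k-1$ and reads only from even slots or constants). The paper breaks the non-$\partial$--$\partial$ case into $j>i+1$ (trivially disjoint) and $j=i+1$ (verify all nine type combinations), while your parity observation — writes go to odd slots, reads come from even slots — handles both subcases uniformly, which is a marginally cleaner organization of the same verification.
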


\begin{proof}
    Recall \eqref{E:Sigma^r}.
    Without loss of generality, we may and will assume that $i<j$.
    For the case that $\sigma_i = \partial_{u_{2i-1}}$ and $\sigma_j = \partial_{u_{2j-1}}$, \eqref{E:sig:sig:G} is simply symmetry of mixed derivatives, which follows from the smoothness of $G$ near ${\bf u}$.
    Next, consider the case that at least one of $\sigma_i, \sigma_j$ is not a differential operator.  Indeed, if $j \ne i+1$, it is clear that \eqref{E:sig:sig:G} holds because $\Phi_{2i-1}^\pm G({\bf u})$ transforms the variable $u_{2i-1}$ to either $u_{2i}$ or $u_{2i-2}$ and leaves all other variables unchanged.
    If $j=i+1$, we can directly check all nine combinations from $\Omega_i\times\Omega_{i+1}$ by definition. For example,     \begin{align*}
        &\Phi_{2i-1}^+ \partial_{u_{2i+1}} G({\bf u})
        = \partial_{u_{2i+1}}G(\dots, u_{2i-2}, u_{2i}, u_{2i}, u_{2i+1}, u_{2i+2}, \dots) 
        = \partial_{u_{2i+1}} \Phi_{2i-1}^+ G({\bf u}),\\
         &\Phi_{2i-1}^+ \Phi_{2i+1}^+ G({\bf u})
        = G(\dots, u_{2i-2}, u_{2i}, u_{2i}, u_{2i+2}, u_{2i+2}, \dots) 
        = \Phi_{2j-1}^+ \Phi_{2i-1}^+ G({\bf u}),
    \end{align*}
   where we set $u_{-1}=0$ and $u_{2q+2}=T$ for convenience. This proves the lemma.
\end{proof}

\begin{definition}
Let $\boldsymbol{\varepsilon} = (\varepsilon_j)_{j=1}^{2q} \in (\Z\cup\{\ast\})^{2q}$. We say that an entry $\varepsilon_j$ of $\boldsymbol{\varepsilon}$ is {\bf non-trivial} if $\varepsilon_j\ne0$ and $\varepsilon_j\ne\ast$.    
\end{definition}
 The next lemma counts the number of non-trivial entries when  applying  $\boldsymbol\sigma\in\Omega^q$ that consists of $\Phi_j^\pm$-type operations to $\boldsymbol\varepsilon\in{\mathcal A}_{2q}$. 
 Recall \eqref{A_{2q}} for the definition of $\mathcal{A}_{2q}$.

\begin{lemma}\label{lem:var:reduc}
    Let $\boldsymbol\varepsilon \in{\mathcal A}_{2q}$ and $\boldsymbol{\sigma}\in \Omega^q$ with \eqref{eq_J_12-section7} and \eqref{eq_J_3-section7}. For $1 \le i \le m$, let $\sigma_{k_i}=\Phi_{2k_i-1}^{\diamond_i}$, where $\diamond_i \in \{+, -\}$.
    Suppose that $\boldsymbol{\alpha}:=\sigma_{k_m}\cdots\sigma_{k_1}\boldsymbol\varepsilon$ has exactly $I$ non-trivial entries  and the indices of these non-trivial entries are $j_1<\dots< j_I$.
    Then the following properties hold.
    \begin{enumerate}
        \item[{\rm (i)}] The following set inclusions hold for the indices:
        \begin{align*}
            &\{1,\cdots,2q\} \setminus\{2k_1-1, 2k_1-1\diamond_1 1, \cdots, 2k_m-1, 2k_m-1\diamond_m 1\}\\
            &\subset \{j_1, \cdots, j_I\}\\
            &\subset \{1,\cdots,2q\} \setminus \{2k_1-1, \dots, 2k_m-1\}
        \end{align*}
        and hence $2q-2m\le I\le 2q-m$;
        \item[{\rm (ii)}] $\sigma_{k_m} \cdots \sigma_{k_1} G_{\boldsymbol{\varepsilon}}({\bf u}) = G_{\sigma_{k_m}\cdots \sigma_{k_1} \boldsymbol{\varepsilon}}({\bf u})$ and is a function of the variables $u_{j_1},\dots, u_{j_I}$, independent of the other variables $u_j$ where $j \not\in\{j_1,\dots,j_I\}$;
        \item[{\rm (iii)}] If we write  
        \begin{equation}\label{eq_G_m}
        G_{\sigma_{k_m}\cdots \sigma_{k_1} \boldsymbol{\varepsilon}}({\bf u}) = \exp\left\{-\pi \Var\left(\sum_{i=1}^Ia_i B(u_{j_i})\right)\right\},
        \end{equation}
        then 
        $$
        \sum_{i=1}^I a_i =\sum_{i=1}^{2q} \varepsilon_i= 0, 
        $$
        and $|a_i|\le 3$ for all $i = 1,\dots, I$. 
    \end{enumerate}
\end{lemma}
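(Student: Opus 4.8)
The plan is to deduce all three assertions from one explicit description of the coefficient vector $\boldsymbol\alpha := \sigma_{k_m}\cdots\sigma_{k_1}\boldsymbol\varepsilon$ together with a single direct variance computation. I would first record the bookkeeping of the shift--merge operations: each $\Phi_{2k-1}^{\diamond}$ with $\diamond\in\{+,-\}$ turns the \emph{odd} entry at position $2k-1$ into $\ast$, adds that entry's value to the \emph{even} position $2k-1\diamond 1$, and leaves every other entry alone; acting on a function it replaces the argument $u_{2k-1}$ by $u_{2k-1\diamond 1}$, under the usual conventions $u_{-1}:=0$, $u_{2q+2}:=T$. Because $k_1<\dots<k_m$, the positions $2k_1-1<\dots<2k_m-1$ turned into $\ast$ are pairwise distinct; moreover an even position is never turned into $\ast$ and an odd position is never given a value, so at the step $\sigma_{k_i}$ is applied the even entries adjacent to $2k_i-1$ are still non-$\ast$ and the entry at $2k_i-1$ is still $\pm1$. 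Hence each operation is legitimate (the hypotheses of Lemma \ref{lem:by:parts} hold), and by Lemma \ref{lem:sig:sig:G} the order of the $\sigma_{k_i}$ does not matter. It follows that, in $\boldsymbol\alpha$, every odd position outside $\{2k_1-1,\dots,2k_m-1\}$ retains its value $\varepsilon_j\in\{\pm1\}$, while an even position $2k$ carries $\varepsilon_{2k}$ plus at most the two contributions $\varepsilon_{2k-1}$ (present iff $\sigma_k=\Phi_{2k-1}^+$) and $\varepsilon_{2k+1}$ (present iff $\sigma_{k+1}=\Phi_{2k+1}^-$); in particular $|\alpha_{2k}|\le 3$.

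Part (i) is then immediate. The entries of $\boldsymbol\alpha$ at $2k_1-1,\dots,2k_m-1$ are $\ast$, hence not non-trivial, giving $\{j_1,\dots,j_I\}\subset\{1,\dots,2q\}\setminus\{2k_1-1,\dots,2k_m-1\}$ and $I\le 2q-m$; conversely, any position outside the touched set $\{2k_1-1,\,2k_1-1\diamond_1 1,\dots,2k_m-1,\,2k_m-1\diamond_m 1\}$ is fixed by every operation, so its value in $\boldsymbol\alpha$ is $\varepsilon_j\ne 0$ — non-trivial — which is the left inclusion; since the touched set has at most $2m$ elements, $I\ge 2q-2m$.

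For (ii) I would prove the one-step identity $\Phi_{2k-1}^{\diamond}(G_{\boldsymbol\beta})=G_{\Phi_{2k-1}^{\diamond}(\boldsymbol\beta)}$ whenever the operation is defined, and then iterate over $i=1,\dots,m$. Substituting $u_{2k-1}\mapsto u_{2k-1\diamond 1}$ inside $\Var\big(\sum_{i:\beta_i\ne\ast}\beta_i B(u_i)\big)$ replaces the summand $\beta_{2k-1}B(u_{2k-1})$ by $\beta_{2k-1}B(u_{2k-1\diamond 1})$, which merges with the existing summand at that coordinate to produce coefficient $\beta_{2k-1\diamond 1}+\beta_{2k-1}$ and no summand at $2k-1$ — precisely the Gaussian linear combination associated with $\Phi_{2k-1}^{\diamond}(\boldsymbol\beta)$ — while every other summand is untouched; the degenerate case $k=1$, $\diamond=-$ is the same computation using $B(0)=0$. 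Iterating gives $\sigma_{k_m}\cdots\sigma_{k_1}G_{\boldsymbol\varepsilon}=G_{\boldsymbol\alpha}$, and since a vanishing coefficient $\alpha_j=0$ contributes $0\cdot B(u_j)$ to the linear combination, $G_{\boldsymbol\alpha}$ depends only on $u_{j_1},\dots,u_{j_I}$ — this is the representation \eqref{eq_G_m} with $a_i=\alpha_{j_i}$. The bound $|a_i|\le 3$ in (iii) is the even-position estimate above (odd positions contribute $|a_i|=1$). Finally, restoring the zero coefficients, $\sum_{i=1}^I a_i=\sum_{j:\alpha_j\ne\ast}\alpha_j$; each interior shift--merge conserves this total (it merely relabels $\beta_{2k-1}$ as $\ast$ and moves its value to an adjacent slot), and the one boundary operation $\Phi_1^-$ only replaces $u_1$ by $0$ where $B$ vanishes, so the underlying Gaussian linear combination — hence the coefficient sum attached to it — stays equal to its initial value $\sum_{i=1}^{2q}\varepsilon_i=0$.

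The main obstacle is the combinatorial bookkeeping in the first paragraph: identifying precisely which coordinates get touched, which turn into $\ast$, and which even coordinate $2k$ can absorb a double contribution (exactly when it lies between a $\Phi_{2k-1}^+$ and a $\Phi_{2k+1}^-$), together with the correct handling of the degenerate boundary index $u_1=0$ arising from $\Phi_1^-$. Once this description is pinned down, parts (i)--(iii) follow mechanically from it and from the single variance identity underlying (ii).
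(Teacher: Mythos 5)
Your route differs from the paper's in an organizational way: rather than inducting on $m$ with a two-case base step and a two-case inductive step (as the authors do, tracking whether the merged entry $\varepsilon_{2k_i-1\diamond_i 1}+\varepsilon_{2k_i-1}$ vanishes and whether $2k_{m+1}-1\diamond_{m+1}1$ collides with $2k_m-1\diamond_m 1$), you first pin down a single explicit description of $\boldsymbol\alpha=\sigma_{k_m}\cdots\sigma_{k_1}\boldsymbol\varepsilon$ and then read (i), (ii), (iii) off it. That bookkeeping is correct: each $\Phi_{2k-1}^{\diamond}$ touches only positions $2k-1$ and $2k-1\diamond 1$, and an even slot $2k$ acquires at most the two neighbouring $\pm1$'s (from $\sigma_k=\Phi_{2k-1}^+$ and $\sigma_{k+1}=\Phi_{2k+1}^-$), giving $|a_i|\le 3$ immediately. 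A small quibble: you cite Lemma~\ref{lem:sig:sig:G} for the commutativity, but that lemma concerns the action on $G$, not on the vector $\boldsymbol\varepsilon$; the vector-level commutativity needs (and admits) its own one-line check — or you can simply note that the order of the $\sigma_{k_i}$ is fixed by $k_1<\dots<k_m$ and never invoke commutativity at all.

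The genuine gap is in your justification of $\sum_{i=1}^I a_i=0$ at the boundary case $\sigma_1=\Phi_1^-$. You assert that $\Phi_1^-$ ``only replaces $u_1$ by $0$ where $B$ vanishes, so the underlying Gaussian linear combination --- hence the coefficient sum attached to it --- stays equal to its initial value.'' The random variable is indeed unchanged because $B(0)=0$, but this does not transfer to the coefficient vector: $\Phi_1^-(\boldsymbol\varepsilon)=(\ast,\varepsilon_2,\dots,\varepsilon_{2q})$ discards $\varepsilon_1$ outright, it does not relocate it, so the sum of non-trivial entries becomes $\sum_{j\ge 2}\varepsilon_j=-\varepsilon_1\in\{-1,1\}$, not $0$. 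The lemma, as stated, asserts an identity for the coefficient vector, and that vector has lost a coordinate; the reasoning ``the sum stays equal'' is simply false for this case. You should be aware that the paper's own induction glosses over the same point --- its base case manipulates the entry at position $2k_1-1\diamond_1 1$, which is the nonexistent position $0$ when $k_1=1$ and $\diamond_1=-$ --- so the printed statement appears to require a caveat for $\Phi_1^-$, or a separate argument that reattaches the dropped $\varepsilon_1$ as a phantom coefficient at a phantom time $s_0=0$ (which is harmless because $B(0)=0$, and is what is ultimately needed in Lemma~\ref{lem:dg}). Asserting the preservation of the sum without that bridge is not a proof.
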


To be clear about our notation, here $\diamond_i\in\{+,-\}$ denotes an arithmetic operation so that $$2k_i-1\diamond_i1 =  \begin{cases}
            2k_i & \text{if } \diamond_i=+,\\
            2k_i-2 & \text{if }  \diamond_i=-.
        \end{cases}$$

\begin{proof}
(i) Note that, from the definition of $\Phi_{2k_i-1}^\pm$, when $\Phi_{2k_i-1}^{\diamond}$ acts on a vector, the entry at the $(2k_i-1)$-th position becomes $\ast$, so $2k_i-1$ is an index of a trivial entry. 
The vector $\boldsymbol{\alpha}$ is obtained by applying $\Phi_{2k_1-1}^{\diamond_1}, \dots, \Phi_{2k_m-1}^{\diamond_m}$ to $\boldsymbol{\varepsilon}$, so the $\ast$ entries of $\boldsymbol{\alpha}$ appear exactly at the $2k_1-1, \dots, 2k_m-1^{\rm th}$ positions, and all other entries are numbers.
Hence,  $\{j_1,\cdots, j_I\}\subset \{1,\cdots,2q\} \setminus \{2k_1-1, \dots, 2k_m-1\}$. 

When $\Phi_{2k_i-1}^{\diamond_i}$ is applied to $\boldsymbol{\varepsilon}$, the $(2k_i-1\diamond_i1)$-th entry becomes $\varepsilon_{2k_i-1\diamond_i1}+\varepsilon_{2k_i-1}$, which may or may not be 0.
For all $j\not\in \{2k_i-1,2k_i-1\diamond_i1:i=1,\dots ,m\}$, the $j$-th entry of $\boldsymbol{\varepsilon}$ is unaffected by the actions of the $\Phi_{2k_i-1}^{\diamond_i}$'s,
meaning that the entry is still $\varepsilon_j$, which is equal to $\pm 1$. This shows the first inclusion.

   Next, we prove (ii) and (iii) by induction on $m$. Suppose $m=1$ and $\sigma_{k_1} = \Phi_{2k_1-1}^{\diamond}$, where $\diamond \in \{+, -\}$, then the $(2k-1)$-th entry of $\sigma_{k_1} \boldsymbol\varepsilon$ is $\ast$, the $(2k_1-1\diamond 1)$-th entry is $\varepsilon_{2k_1-1\diamond 1} + \varepsilon_{2k-1}$, which is either $\pm 2$ or $0$, and all other entries are unchanged and equal to $\pm 1$.
    
    {\bf Case 1}: If $\varepsilon_{2k_1-1\diamond 1} + \varepsilon_{2k-1} = \pm 2$, then
    $\{j_1, \dots, j_I\} = \{1,\dots, 2q\} \setminus\{2k_1-1\}$ and
    \begin{align*}
        \sigma_{k_1} G_{\boldsymbol{\varepsilon}}({\bf u})
        &= \exp\left\{ - \pi \Var\left( \sum_{i \ne 2k_1-1, 2k_1-1\diamond 1} \varepsilon_i B(u_i) + (\varepsilon_{2k_1-1\diamond 1} + \varepsilon_{2k-1}) B(u_{2k_1-1\diamond 1}) \right) \right\}\\
        &= G_{\sigma_{k_1}\boldsymbol{\varepsilon}}({\bf u}).
    \end{align*}
    
    {\bf Case 2}: If $\varepsilon_{2k_1-1\diamond 1} + \varepsilon_{2k-1} = 0$, then
    $\{j_1, \dots, j_I\} = \{1,\dots, 2q\} \setminus\{2k_1-1, 2k_1-1\diamond 1\}$ and
    \begin{align*}
        \sigma_{k_1} G_{\boldsymbol{\varepsilon}}({\bf u})
        = \exp\left\{ - \pi \Var\left( \sum_{i \ne 2k_1-1, 2k_1-1\diamond 1} \varepsilon_i B(u_i) \right) \right\}
        = G_{\sigma_{k_1}\boldsymbol{\varepsilon}}({\bf u}).
    \end{align*}
    In both cases, properties (ii) and (iii) hold since $\sum_{j=1}^{2q}\varepsilon_j = 0$ by the assumption that $\boldsymbol{\varepsilon}\in {\mathcal A}_{2q}$ (see \eqref{A_{2q}}).

    Suppose the lemma is valid up to certain $m\ge 1$. Let $\boldsymbol{\alpha}:= \sigma_{k_m} \cdots \sigma_{k_1} \boldsymbol{\varepsilon}$. According to (\ref{eq_G_m}), $(a_1,\dots ,a_I)$ are the non-trivial entries of $\boldsymbol{\alpha}$, and the trivial entries are among the positions $\{2k_i-1,2k_i-1\diamond_i1:i=1,\dots, m\}$, thanks to property (i).

    Consider $k_{m+1} \in \{1, \dots, q\}$ such that $k_{m+1} > k_m$ and $\sigma_{k_{m+1}} = \Phi_{2k_{m+1}-1}^{\diamond_{m+1}}$, where $\diamond_{m+1} \in \{+, -\}$. Note that 
    \begin{equation}\label{eq-equality}
    2k_{m+1}-1\diamond_{m+1}1\ge 2k_m-1\diamond_m 1
    \end{equation}
    and equality holds if and only if  $k_{m+1} = k_m+1$, $\diamond_{m+1}= -$ and $\diamond_m = +$. 
    
    Suppose that the equality in \eqref{eq-equality} does not hold. 
    When applying $\sigma_{k_{m+1}}$ to the vector ${\boldsymbol{\alpha}} $, all entries remain unchanged except for the $(2k_{m+1}-1)$-th entry (which becomes $\ast$) and possibly the $(2k_{m+1}-1\diamond_{m+1}1)$-th entry (which becomes ${\varepsilon}_{2k_{m+1}-1\diamond_{m+1}1}+ \varepsilon_{2k_{m+1}-1}$ and may or may not be 0).
  
    Then, by the induction hypothesis, if $G_{\boldsymbol{\alpha}}({\bf u}) = \exp(-\pi\Var(\sum_{i=1}^{I}a_iB(u_{j_i})))$ with $\sum_{i=1}^Ia_i=0$, then 
    \begin{align}
        &\boldsymbol{\sigma}G_{\boldsymbol{\varepsilon}}({\bf u})
        = \sigma_{k_{m+1}} G_{\boldsymbol{\alpha}}({\bf u})\notag\\
        &= \exp\left\{ -\pi \Var\left( \sum_{i:j_i\ne u,v} a_i B(u_{j_i}) + ({\varepsilon}_{2k_{m+1}-1\diamond_{m+1}1}+ \varepsilon_{2k_{m+1}-1}) B(u_{2k_{m+1}-1\diamond_{m+1}1}) \right) \right\}\notag\\
        &= G_{\sigma_{k_{m+1}} \boldsymbol{\alpha}}({\bf u}),
        \label{sigma:G(u)}
    \end{align}
    where $u = 2k_{m+1}-1$ and $v=2k_{m+1}-1\diamond_{m+1}1$.  
    This shows property (ii). To prove property (iii), we first note that the induction hypothesis implies that $\sum_{i=1}^Ia_i=0$. Also, note that $a_u =\varepsilon_{2k_{m+1}-1}$ and $a_v = \varepsilon_{2k_{m+1}-1\diamond_{m+1}1}$. In particular, we have the following relation:
    $$
    \sum_{i:j_i\ne u,v} a_i + (\varepsilon_{2k_{m+1}-1\diamond_{m+1}1}+\varepsilon_{2k_{m+1}-1}) = \sum_{i=1}^I a_i = 0. 
    $$ 
    Hence, property (iii) follows from \eqref{sigma:G(u)}, the preceding relation, and the observation that all coefficients of $B(\cdot)$ in \eqref{sigma:G(u)} are at most 3 by induction hypothesis and $|{\varepsilon}_{2k_{m+1}-1\diamond_{m+1}1}+ \varepsilon_{2k_{m+1}-1}|\le 2$.

    Suppose that the equality holds in (\ref{eq-equality}). Then $2k_{m+1}-2$ coincides with $2k_m$ and
    \begin{align*}
        &\boldsymbol{\sigma}G_{\boldsymbol{\varepsilon}}({\bf u})\\
        &=\exp\left\{ -\pi \Var\left( \sum_{i:j_i \ne u,v,w} a_i B(u_{j_i}) + (\varepsilon_{2k_m-1}+\varepsilon_{2k_m}+\varepsilon_{2k_{m+1}-2}) B(u_{2k_{m+1}-1}) \right) \right\}\\
        &= G_{\sigma_{k_{m+1}}\boldsymbol{\alpha}}({\bf u}),
    \end{align*}
    where $u = 2k_{m+1}-1$, $v = 2k_m$, and $w = 2k_{m+1}-1$.
    Note that $\varepsilon_{2k_m-1}+\varepsilon_{2k_m}+\varepsilon_{2k_{m+1}-1}$ must be non-zero and has absolute value at most 3 since $\varepsilon_j\in\{1,-1\}$. 
    It is now easy to check properties (ii) and (iii). 
    This completes the proof of the lemma. 
\end{proof}

\begin{rem}\label{rmk:entry} 
Let $\boldsymbol{\sigma} = (\sigma_1,\cdots,\sigma_q)\in \Omega^q$ with $k_1<\cdots<k_m$ and $k_1'<\cdots<k_{\ell}'$ defined by (\ref{eq_J_12-section7}) and (\ref{eq_J_3-section7}), and let $\boldsymbol\alpha= \sigma_{k_m}\cdots\sigma_{k_1}\boldsymbol\varepsilon$.
Then Lemma \ref{lem:var:reduc} shows the following:
\begin{enumerate} 
    \item[(i)] If $\sigma_{k_i'}$ is a differential operator, the ($2k_i'-1$)-th the entry of $\boldsymbol\alpha$ must be  non-trivial;
    \item[(ii)] When $\sigma_{k_i} = \Phi_{2k_i-1}^{\diamond}$ ($\diamond\in\{+,-\}$) is applied to $\boldsymbol\varepsilon$, it will
    trivialize at most two entries. In fact, the $(2k_i-1)$-th entry will become $\ast$ and the $(2k_i-1\diamond1)$-th entry may become zero if $\varepsilon_{2k_i-1} + \varepsilon_{2k_i-1\diamond 1} = 0$. 
\end{enumerate}
\end{rem}

\begin{definition}\label{def_non-trivial-Variable}
    Let $\boldsymbol{\varepsilon}\in\mathcal{A}_{2q}$ and $\boldsymbol{\sigma}\in \Omega^q$.
    With the notations in Remark \ref{rmk:entry}, define $\boldsymbol\alpha= \sigma_{k_m}\cdots\sigma_{k_1}\boldsymbol\varepsilon$. Suppose $\boldsymbol{\alpha}$ has exactly $I$ non-trivial entries with indices $j_1<\cdots < j_I$. We call the variables $s_1= u_{j_1},\dots, s_I= u_{j_I}$ {\bf non-trivial variables}. 
    We call the variables $u_{2k_1'-1}, \dots, u_{2k_\ell'-1}$ the {\bf differentiation variables}. 
    From Remark \ref{rmk:entry} (i), we know that all differentiation variables are non-trivial variables, so we can define a subset of indices $p_1<\cdots<p_\ell$ in $\{1,\dots, I\}$ such that $s_{p_i}= u_{2k_i'-1}$.
\end{definition}

The next lemma shows that the indices of the differentiation variables $s_{p_1},\dots, s_{p_\ell}$ must differ at least by 2.

\begin{lemma}\label{lem:dvar:sep}
$p_{i+1} - p_i \ge 2$ for all $1 \le i < \ell$.
\end{lemma}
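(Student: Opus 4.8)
The plan is to exhibit a non-trivial entry of $\boldsymbol\alpha := \sigma_{k_m}\cdots\sigma_{k_1}\boldsymbol\varepsilon$ at some position strictly between $2k_i'-1$ and $2k_{i+1}'-1$. Indeed, by Definition~\ref{def_non-trivial-Variable} we have $j_{p_i}=2k_i'-1$ and $j_{p_{i+1}}=2k_{i+1}'-1$, so if $p_0$ is a non-trivial position of $\boldsymbol\alpha$ with $2k_i'-1<p_0<2k_{i+1}'-1$ and we write $p_0=j_p$ in the increasing enumeration $j_1<\cdots<j_I$ of the non-trivial positions of $\boldsymbol\alpha$, then $p_i<p<p_{i+1}$, whence $p_{i+1}-p_i\ge2$.

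First I would set $P:=\{2k_i',\,2k_i'+1,\dots,\,2k_{i+1}'-2\}$, a block of $2(k_{i+1}'-k_i')-1\ge1$ consecutive integers. Since $k_i'$ and $k_{i+1}'$ are consecutive elements of $J_3(\boldsymbol\sigma)$ and $\{1,\dots,q\}=(J_1(\boldsymbol\sigma)\cup J_2(\boldsymbol\sigma))\sqcup J_3(\boldsymbol\sigma)$, every $k$ with $k_i'<k<k_{i+1}'$ lies in $J_1(\boldsymbol\sigma)\cup J_2(\boldsymbol\sigma)$, so $\sigma_k$ is an operation of type $\Phi_{2k-1}^\pm$. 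The indices $2k-1$ with $k_i'<k<k_{i+1}'$ are precisely the odd positions of $P$, and each becomes $\ast$ in $\boldsymbol\alpha$ once the corresponding $\Phi_{2k-1}^\pm$ is applied (and stays $\ast$ thereafter). Conversely, a $\Phi$-operation places $\ast$ only at its own odd pivot, so no even position of $P$ ever becomes $\ast$ in $\boldsymbol\alpha$.

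The key point will be a conservation law. Let $\Sigma_P$ denote the sum of the current entries over the positions in $P$, where a position carrying $\ast$ contributes $0$; I claim every $\Phi_{2k-1}^\pm$ occurring in the formation of $\boldsymbol\alpha$ leaves $\Sigma_P$ unchanged, so that in $\boldsymbol\alpha$ one has $\Sigma_P=\sum_{p\in P}\varepsilon_p$. Such an operation either discards the entry at position $1$ (when it equals $\Phi_1^-$) or else moves the entry at the odd pivot $2k-1$ onto one of the adjacent positions $2k-2$ or $2k$. A discard affects $\Sigma_P$ only if $1\in P$, which fails as $\min P=2k_i'\ge2$; a move affects $\Sigma_P$ only if exactly one of the pivot and its target lies in $P$. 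Since $2k-1$ is odd and the even positions of $P$ are $2k_i',2k_i'+2,\dots,2k_{i+1}'-2$, a case check shows this can happen only for $\Phi_{2k_i'-1}^+$ (which would move an entry onto $2k_i'\in P$ from outside) or for $\Phi_{2k_{i+1}'-1}^-$ (which would move an entry onto $2k_{i+1}'-2\in P$ from outside); but neither of these is applied, because $k_i',k_{i+1}'\in J_3(\boldsymbol\sigma)$, so $\sigma_{k_i'}$ and $\sigma_{k_{i+1}'}$ are differential operators, not of type $\Phi^\pm$. I expect this verification — that no entry ever crosses the two ``cuts'' at the ends of $P$ — to be the only step requiring care; everything else is bookkeeping about which positions each operator touches.

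To conclude, $\sum_{p\in P}\varepsilon_p$ is a sum of $2(k_{i+1}'-k_i')-1$ terms, each equal to $\pm1$, hence an odd integer, in particular nonzero. Since in $\boldsymbol\alpha$ the odd positions of $P$ contribute $0$ to $\Sigma_P$, there must be an even position $p_0\in P$ whose entry in $\boldsymbol\alpha$ is a nonzero integer; that is, $p_0$ is a non-trivial entry of $\boldsymbol\alpha$ with $2k_i'-1<p_0<2k_{i+1}'-1$. By the observation in the first paragraph, this forces $p_{i+1}-p_i\ge2$, completing the proof.
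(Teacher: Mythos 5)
Your argument is correct, and it takes a genuinely different route from the paper. The paper's proof (after splitting off the trivial case $k'_{i+1}=k'_i+1$) is a counting argument: using Remark \ref{rmk:entry}~(ii), each of the $k'_{i+1}-k'_i-1$ operators $\Phi_{2k'_i+1}^{\pm},\dots,\Phi_{2k'_{i+1}-3}^{\pm}$ trivializes at most two of the $2(k'_{i+1}-k'_i)-1$ positions between $2k'_i-1$ and $2k'_{i+1}-1$, so at least one survives. You instead introduce a conservation law for the signed block sum $\Sigma_P=\sum_{p\in P}\varepsilon_p$ over $P=\{2k'_i,\dots,2k'_{i+1}-2\}$: you check carefully that no $\Phi_{2k-1}^{\pm}$ actually applied can move mass across the boundary of $P$, precisely because $\sigma_{k'_i}$ and $\sigma_{k'_{i+1}}$ are differential operators rather than $\Phi^{\pm}$. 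Since $\Sigma_P$ is a sum of an odd number ($2(k'_{i+1}-k'_i)-1$) of $\pm1$'s, it is odd hence nonzero, and as the odd positions of $P$ contribute $0$ in $\boldsymbol\alpha$, some even position of $P$ must carry a nonzero entry. This parity argument treats both the cases $k'_{i+1}=k'_i+1$ and $k'_{i+1}>k'_i+1$ uniformly, at the cost of a slightly more delicate case-check that no entry crosses the two cuts; the paper's cardinality bound is shorter but needs the explicit case split and the ``at most two'' observation from Remark \ref{rmk:entry}. Both proofs rest on the same structural fact that $\sigma_{k'_i}$ and $\sigma_{k'_{i+1}}$ are $\partial$-type and the operators between them are $\Phi$-type.
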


\begin{proof}
 If $k'_{i+1} = k'_i+1$, then $\sigma_{k_i'}$ and $\sigma_{k_i'+1}$ are both differential operators. Hence, $\varepsilon_{2k_i'-1}$, $ \varepsilon_{2k_i'}$, $\varepsilon_{2k_i'+1}$ are all non-trivial entries and thus $p_{i+1}-p_i=2$ in this case.

If $k'_{i+1} > k'_i+1$, then the sequence $\sigma_{k_i}, \sigma_{k_i+1}, \dots, \sigma_{k_{i+1}}$ is of the form
\begin{equation}\label{eq_sigma-between}
\partial_{u_{2k_i'-1}}, ~ \Phi_{2k_i'+1}^{\pm},~ \dots, ~\Phi_{2k_{i+1}'-3}^{\pm},~ \partial_{u_{2k_{i+1}'-1}}.
\end{equation}
By Remark \ref{rmk:entry} (ii), each $\Phi_{2j-1}^{\pm}$ trivializes at most two entries adjacent to each other, so all of the $\Phi_{2j-1}^\pm$'s in (\ref{eq_sigma-between}) trivialize at most $2(k_{i+1}'-k_i')-2$ many entries. Hence, at least one non-trivial entry remains in between $2k_i'-1$ and $2k_{i+1}'-1$, showing that $p_{i+1}-p_i\ge 2$. 
\end{proof}

\subsection{Derivative formulae for the variance.} Let $I \in \N_+$, ${\bf a} = (a_1, \dots, a_I) \in \R^I$, and $t_1,\dots,t_I\ge0$.
By $\Var\left(\sum a_i X_i\right) = \sum a_ia_j \mathrm{Cov}(X_i,X_j)$, and the covariance formula \eqref{E:fBm:cov}, we have
\begin{align}\begin{split}\label{E:fBm:var}
    &\Var\left( \sum_{i=1}^{I} a_i B(t_i)\right) 
    = \sum_{i=1}^{I} \sum_{j=1}^{I} a_i a_j \mathrm{Cov}(B(t_i),B(t_j))\\
    &= \frac12 \sum_{i=1}^{I} \sum_{j=1}^{I} a_i a_j \left( t_i^{2H} + t_j^{2H} - |t_i-t_j|^{2H} \right).
\end{split}\end{align}
Define $g_{\bf a} : [0, \infty)^I \to \R$ by
\begin{align}\label{E:g_a(s)}
    g_{\bf a}({\bf s}) = g_{\bf a}(s_1,\dots, s_I) = -\Var\left( \sum_{i=1}^I a_i B(s_i) \right).
\end{align}
The lemma below provides formulae for the derivatives of $g_{\bf a}({\bf s})$ for our later use.
\begin{lemma}\label{lem:dg}
  Let $0<s_1<\dots<s_I$ and $a=(a_1,\dots,a_I)\in \R^I$.
  Suppose $\sum_{i=1}^I a_i = 0$.
  Then, we have the following:
\begin{enumerate}
    \item[(i)] For any $i\in \{1,\dots,I\}$,
    \begin{align*}
        \partial_{s_i} g_a({\bf s}) 
        = 2H\sum_{j=1}^{i-1} a_i a_j (s_i-s_j)^{2H-1} - 2H \sum_{j=i+1}^{I} a_i a_j (s_j-s_i)^{2H-1}.
    \end{align*}
    \item[(ii)] For any $i<j$ in $\{1,\dots,I\}$,
    \begin{align*}
        \partial_{s_i} \partial_{s_j} g_a({\bf s}) 
        = -2H(2H-1) a_i a_j (s_{j}-s_{i})^{2H-2}.
    \end{align*}
    \item[(iii)] If $k \ge 3$, then for any $i_1 < \dots < i_k$ in $\{1, \dots, I\}$, 
    \begin{align*}
        \partial_{s_{i_1}} \cdots \partial_{s_{i_k}} g_a({\bf s}) = 0.
    \end{align*}
\end{enumerate}  
\end{lemma}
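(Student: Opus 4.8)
The plan is to compute the derivatives directly from the explicit formula \eqref{E:fBm:var} for the variance, treating the three summands $t_i^{2H}$, $t_j^{2H}$, and $-|t_i-t_j|^{2H}$ separately. Writing out \eqref{E:g_a(s)} with \eqref{E:fBm:var},
\begin{align*}
    g_a({\bf s}) = -\frac12 \sum_{i,j} a_i a_j \left( s_i^{2H} + s_j^{2H} - |s_i-s_j|^{2H}\right)
    = -\Bigl(\sum_{j} a_j\Bigr)\sum_i a_i s_i^{2H} + \frac12 \sum_{i,j} a_i a_j |s_i - s_j|^{2H},
\end{align*}
where I have used symmetry of the double sum to combine the first two terms. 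Now the hypothesis $\sum_j a_j = 0$ kills the first term entirely, so $g_a({\bf s}) = \frac12 \sum_{i,j} a_i a_j |s_i - s_j|^{2H}$. This is the key simplification: all single-variable powers $s_i^{2H}$ disappear, leaving only pairwise difference terms. Since we are working on the open simplex $0 < s_1 < \dots < s_I$, each $|s_i - s_j| = s_j - s_i$ for $i<j$ is smooth and positive, so there are no issues differentiating.

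First I would prove (i): differentiating $g_a({\bf s}) = \sum_{i<j} a_i a_j (s_j - s_i)^{2H}$ (folding the symmetric sum) with respect to $s_i$, the variable $s_i$ appears as the larger argument in pairs $(j,i)$ with $j<i$ and as the smaller argument in pairs $(i,j)$ with $j>i$; applying $\tfrac{d}{ds_i}(s_i - s_j)^{2H} = 2H(s_i-s_j)^{2H-1}$ in the first case and $\tfrac{d}{ds_i}(s_j - s_i)^{2H} = -2H(s_j - s_i)^{2H-1}$ in the second yields exactly the stated expression. For (ii), I would differentiate the formula from (i) with respect to $s_j$ for $j>i$: in $\partial_{s_i} g_a$, the only term involving $s_j$ is $-2H a_i a_j (s_j - s_i)^{2H-1}$ (from the second sum, with summation index equal to $j$), and differentiating gives $-2H(2H-1) a_i a_j (s_j - s_i)^{2H-2}$, as claimed. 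For (iii), observe that $\partial_{s_i}\partial_{s_j} g_a$ from (ii) depends only on the two variables $s_i, s_j$ (and the constants), so any further derivative $\partial_{s_k}$ with $k \notin \{i,j\}$ vanishes; since $k \ge 3$ forces at least one of the $k$ indices to be distinct from any chosen pair, the third-order mixed derivative is identically zero.

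There is essentially no obstacle here — the lemma is a routine calculation once the cancellation $\sum a_i = 0$ is exploited. The only point requiring a word of care is bookkeeping: making sure the index ranges in the single-variable derivative (i) are split correctly at $j = i$, and confirming that no boundary terms or non-smoothness arise, which is guaranteed by the strict ordering $s_1 < \dots < s_I$. I would also remark that the vanishing of third and higher mixed derivatives in (iii) is precisely the structural feature of the fBm covariance that makes the later Fa\`a di Bruno expansion of $G_{\boldsymbol\varepsilon} = \exp(g_a)$ tractable, since only first and second derivatives of the exponent survive.
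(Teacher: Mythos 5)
Your proof is correct and follows essentially the same approach as the paper: both use $\sum_i a_i = 0$ to cancel the single-variable terms and reduce $g_a$ to $\sum_{i<j} a_i a_j (s_j-s_i)^{2H}$, after which the derivative formulae are direct computations. The paper states "all the derivative formulae follow by direct calculations" without carrying them out; you simply spell out those calculations explicitly.
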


\begin{proof}
   The assumption $\sum_{i=1}^I a_i = 0$ implies that
    $$
    \sum_{i=1}^I\sum_{j=1}^I a_ia_j (s_i^{2H}+s_j^{2H})=     \sum_{i=1}^Ia_is_i^{2H}\left(\sum_{j=1}^I a_j\right)+     \sum_{j=1}^Ia_js_j^{2H}\left(\sum_{i=1}^I a_i\right)=0.
    $$
    Hence, by \eqref{E:fBm:var},
    \[
    g_a({\bf s}) = \sum_{1\le i<j\le I} a_ia_j (s_j-s_i)^{2H}.
    \]
    Then, all the derivative formulae follow by direct calculations.
\end{proof}

\subsection{Fa\`a di Bruno's formula} The general chain rule for high-order derivatives is known as {\it Fa\`a di Bruno's formula}. 
We will use a multi-variate, combinatorial version of Fa\`a di Bruno's formula, which is given below.

\begin{lemma}\cite[Proposition 1]{H06}\label{lem:fdb}
    Suppose $f: \R \to \R$ is a smooth function and $g: \R^d \to \R$ is a function that is smooth in a neighborhood of ${\bf x}\in \R^d$.
    Then,
    \begin{align*}
        \partial_{x_1} \cdots \partial_{x_d} f(g({\bf x}))
        = \sum_{P\in \mathscr{P}\{1,\dots,d\}} f^{(\# P)}(g({\bf x})) \prod_{B \in P} g^{(B)}({\bf x}),
    \end{align*}
    where $\mathscr{P}\{1,\dots,d\}$ denotes the set of all partitions of $\{1,\dots,d\}$, $\# P$ denotes the cardinality of $P$, and 
    \[
        g^{(B)}({\bf x}) = \partial_{x_{i_1}} \cdots \partial_{x_{i_n}} g({\bf x})
    \]
    whenever $\# B = n$ and $B = \{i_1,\cdots,i_n\}$.
\end{lemma}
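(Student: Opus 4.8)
The plan is to prove the identity by induction on the dimension $d$, peeling off one partial derivative at a time and tracking how set partitions grow when a new element is adjoined.

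For the base case $d=1$, the only partition of $\{1\}$ is $P=\{\{1\}\}$, so the right-hand side is $f^{(1)}(g({\bf x}))\,g^{(\{1\})}({\bf x}) = f'(g({\bf x}))\,\partial_{x_1} g({\bf x})$, which is exactly the ordinary chain rule. For the inductive step I would assume the formula holds for some $d \ge 1$ and compute $\partial_{x_1}\cdots\partial_{x_{d+1}} f(g({\bf x}))$ by applying $\partial_{x_{d+1}}$ to the induction hypothesis written for the first $d$ variables. Leibniz's rule together with the chain rule gives, for each partition $P \in \mathscr{P}\{1,\dots,d\}$,
\begin{align*}
\partial_{x_{d+1}}\!\left[ f^{(\# P)}(g({\bf x}))\prod_{B\in P} g^{(B)}({\bf x})\right]
&= f^{(\# P+1)}(g({\bf x}))\,\big(\partial_{x_{d+1}} g({\bf x})\big)\prod_{B\in P} g^{(B)}({\bf x}) \\
&\quad + f^{(\# P)}(g({\bf x}))\sum_{B_0\in P}\big(\partial_{x_{d+1}} g^{(B_0)}({\bf x})\big)\!\!\prod_{\substack{B\in P \\ B\ne B_0}}\!\! g^{(B)}({\bf x}).
\end{align*}
In the first term, $\partial_{x_{d+1}} g({\bf x}) = g^{(\{d+1\})}({\bf x})$, so this is precisely the summand attached to the partition $P \cup \{\{d+1\}\}$ of $\{1,\dots,d+1\}$, which has $\# P + 1$ blocks; in each term of the second sum, $\partial_{x_{d+1}} g^{(B_0)}({\bf x}) = g^{(B_0 \cup \{d+1\})}({\bf x})$ (here smoothness of $g$ near ${\bf x}$ is used to commute the partial derivatives freely), so it is the summand attached to the partition $(P \setminus \{B_0\}) \cup \{B_0 \cup \{d+1\}\}$, which has $\# P$ blocks.

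The crux — and the only step requiring genuine checking, though it is purely combinatorial — is that, as $P$ ranges over $\mathscr{P}\{1,\dots,d\}$ and (in the second case) $B_0$ over $P$, every partition $Q$ of $\{1,\dots,d+1\}$ is produced exactly once. Given such a $Q$, let $B_*$ be its unique block containing $d+1$: if $B_* = \{d+1\}$ then $Q$ arises only from the first term with $P = Q \setminus \{B_*\}$, while if $\# B_* \ge 2$ then $Q$ arises only from the second term with $P = (Q \setminus \{B_*\}) \cup \{B_* \setminus \{d+1\}\}$ and $B_0 = B_* \setminus \{d+1\}$; in both cases $\# Q$ equals the order of the $f$-derivative appearing. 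Summing the displayed identity over all $P \in \mathscr{P}\{1,\dots,d\}$ therefore reproduces $\sum_{Q \in \mathscr{P}\{1,\dots,d+1\}} f^{(\# Q)}(g({\bf x}))\prod_{B \in Q} g^{(B)}({\bf x})$, which closes the induction. I anticipate no real obstacle beyond this bookkeeping; the statement is exactly \cite[Proposition 1]{H06}, and the argument sketched here is the standard one.
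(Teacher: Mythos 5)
Your proof is correct, and it is the standard inductive argument for the multivariate Fa\`a di Bruno formula: peel off one variable via Leibniz and chain rule, observe that the two resulting sums correspond to adjoining $d+1$ as a singleton block versus inserting it into an existing block, and verify that this gives a bijection onto $\mathscr{P}\{1,\dots,d+1\}$ with the block count tracking the order of the $f$-derivative. The paper does not prove this lemma at all --- it cites it directly from Hardy \cite{H06} --- so there is no in-paper proof to compare against; your argument is essentially the one given in that reference and is complete modulo writing out the base case and the Schwarz-symmetry justification you already flag.
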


\begin{proposition}\label{Lemma_deBruno}
      Let $\boldsymbol{\varepsilon} \in \mathcal{A}_{2q}$ and $\boldsymbol{\sigma} = (\sigma_1, \dots, \sigma_q)\in \Omega^q$ with $k_1<\cdots<k_m$ and $k_1'<\cdots<k_{\ell}'$ defined by \eqref{eq_J_12-section7} and \eqref{eq_J_3-section7}. Let $j_1<\cdots <j_I$ be the indices of the non-trivial entries of $\sigma_{k_m}\cdots\sigma_{k_1}\boldsymbol{\varepsilon}$. Then we have the following:
      \begin{enumerate}
      \item[{\rm (i)}] $\boldsymbol{\sigma}G_{\boldsymbol{\varepsilon}} ({\bf u})$ is a function depending only on the non-trivial variables $u_{j_1},\dots, u_{j_I}$;
      \item[{\rm (ii)}] Fa\`a di Bruno's formula: with the notations in Definition \ref{def_non-trivial-Variable},
      \begin{align}\label{E:fdb}
        \boldsymbol\sigma G_{\boldsymbol\varepsilon}({\bf s})
        = \sum_{P\in \mathscr{P}_2\{{p_1},\dots,{p_{\ell}}\}} \pi^{\# P} \exp(\pi g_{\bf a}({\bf s})) \prod_{B \in P} g_{\bf a}^{(B)}({\bf s}),
    \end{align}
    where 
    \begin{align*}
        \qquad
        \mathscr{P}_2\{{p_1},\dots,{p_\ell}\} 
        = \big\{ P \in \mathscr{P}\{{p_1},\dots,{p_\ell}\} : \# B \le 2 \text{ for all } B \in P \big\}
    \end{align*}
    and $g_{\bf a}({\bf s})$ is defined by \eqref{E:g_a(s)}
    with ${\bf a} = (a_1,\dots,a_I)$ being the non-trivial entries of $\boldsymbol{\alpha}:=\sigma_{k_m}\cdots\sigma_{k_{1}}\boldsymbol\varepsilon$ listed in the same order, which satisfy $\sum_{i=1}^Ia_i = 0$ and $|a_i|\le 3$ for all $1 \le i\le I$.
      \end{enumerate}
\end{proposition}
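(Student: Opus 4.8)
The plan is to peel off the two types of operations composing $\boldsymbol\sigma$ in turn. First I would dispose of the $\Phi^\pm$-type operations via Lemma \ref{lem:var:reduc}, which collapses $\sigma_{k_m}\cdots\sigma_{k_1} G_{\boldsymbol\varepsilon}$ into a single Gaussian exponential $G_{\boldsymbol\alpha}$ depending only on the non-trivial variables; then I would handle the remaining $\ell$ differentiations by Fa\`a di Bruno's formula (Lemma \ref{lem:fdb}) applied to the composition of $x\mapsto e^{\pi x}$ with the variance function $g_{\bf a}$. The restriction of the partition sum to blocks of size at most two will then be forced by the vanishing of all mixed partials of $g_{\bf a}$ of order three or higher.

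Concretely, since the index set $\{1,\dots,q\}$ splits as $\{k_1<\cdots<k_m\}\sqcup\{k_1'<\cdots<k_\ell'\}$ and distinct $\sigma_i$ commute on smooth functions (Lemma \ref{lem:sig:sig:G}), I would rewrite
\[
\boldsymbol\sigma G_{\boldsymbol\varepsilon}=\partial_{u_{2k_\ell'-1}}\cdots\partial_{u_{2k_1'-1}}\bigl(\sigma_{k_m}\cdots\sigma_{k_1}G_{\boldsymbol\varepsilon}\bigr).
\]
By Lemma \ref{lem:var:reduc}(ii), the inner factor equals $G_{\boldsymbol\alpha}$ with $\boldsymbol\alpha=\sigma_{k_m}\cdots\sigma_{k_1}\boldsymbol\varepsilon$, and it depends only on the non-trivial variables $u_{j_1},\dots,u_{j_I}$. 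Since each differentiation variable $u_{2k_i'-1}$ is one of these (Remark \ref{rmk:entry}(i)), differentiating introduces no new variable dependence, which proves (i). Putting ${\bf s}=(s_1,\dots,s_I)=(u_{j_1},\dots,u_{j_I})$ and letting ${\bf a}=(a_1,\dots,a_I)$ be the list of non-trivial entries of $\boldsymbol\alpha$, Lemma \ref{lem:var:reduc}(iii) gives $\sum_{i=1}^I a_i=0$ and $|a_i|\le3$, and from \eqref{E:g_a(s)} we get $G_{\boldsymbol\alpha}({\bf u})=\exp\{-\pi\Var(\sum_{i=1}^I a_i B(s_i))\}=\exp(\pi g_{\bf a}({\bf s}))$. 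Finally, $u_{2k_i'-1}=s_{p_i}$ by Definition \ref{def_non-trivial-Variable}, and the indices $p_1<\cdots<p_\ell$ are distinct (indeed well separated, by Lemma \ref{lem:dvar:sep}).

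It then remains to compute $\partial_{s_{p_1}}\cdots\partial_{s_{p_\ell}}e^{\pi g_{\bf a}({\bf s})}$ on the open simplex $0<s_1<\cdots<s_I$, where $g_{\bf a}$ is smooth. Applying Lemma \ref{lem:fdb} with $f(x)=e^{\pi x}$, so that $f^{(k)}(x)=\pi^k e^{\pi x}$, and regarding $g_{\bf a}$ as a function of the $\ell$ variables $s_{p_1},\dots,s_{p_\ell}$ (holding the other $s_j$ fixed), yields
\[
\partial_{s_{p_1}}\cdots\partial_{s_{p_\ell}}e^{\pi g_{\bf a}({\bf s})}=\sum_{P\in\mathscr{P}\{p_1,\dots,p_\ell\}}\pi^{\#P}\,e^{\pi g_{\bf a}({\bf s})}\prod_{B\in P}g_{\bf a}^{(B)}({\bf s}).
\]
Now for any block $B$ with $\#B\ge3$, Lemma \ref{lem:dg}(iii) (applicable because $\sum_i a_i=0$) gives $g_{\bf a}^{(B)}({\bf s})=0$, so every partition containing such a block contributes nothing and the sum collapses to partitions with all blocks of size at most two; this is exactly \eqref{E:fdb}.

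I expect the one genuinely structural input to be the vanishing of all order-$\ge3$ mixed partials of the variance (Lemma \ref{lem:dg}(iii)), which is what truncates the Fa\`a di Bruno expansion to $\mathscr{P}_2$; this is special to fBm, whose covariance \eqref{E:fBm:cov} is a sum of terms each involving at most two time variables. The remaining work is bookkeeping: tracking which entries of $\boldsymbol\varepsilon$ get trivialized by the $\Phi^\pm$ operations and matching the differentiation variables $u_{2k_i'-1}$ with their positions $s_{p_i}$ among the surviving non-trivial variables. One should also note that Lemma \ref{lem:dg} requires the strict ordering $s_1<\cdots<s_I$ (its second-derivative formula is singular on the diagonal, since $2H-2<0$), but that ordering holds on the open simplex, which is all that the later integral estimates use.
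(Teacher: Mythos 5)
Your proposal is correct and follows essentially the same route as the paper: commute the operators via Lemma \ref{lem:sig:sig:G}, collapse the $\Phi^{\pm}$-part to $G_{\boldsymbol\alpha}=\exp(\pi g_{\bf a}({\bf s}))$ via Lemma \ref{lem:var:reduc}, apply the combinatorial Fa\`a di Bruno formula (Lemma \ref{lem:fdb}) to $f(x)=e^{\pi x}$ composed with $g_{\bf a}$, and then truncate to $\mathscr{P}_2$ using Lemma \ref{lem:dg}(iii). Your added remark about the need for the strict ordering $s_1<\cdots<s_I$ to ensure smoothness of $g_{\bf a}$ is a correct and worthwhile observation, but it does not constitute a different argument.
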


\begin{proof}
        (i).  By Lemma \ref{lem:sig:sig:G}, the $\sigma_i$'s commute with each other, so we can write
    \begin{align}\label{sigmaG:commute}
        \boldsymbol{\sigma}  G_{\boldsymbol\varepsilon}({\bf u}) = \partial_{u_{2k_1'-1}} \cdots \partial_{u_{2k_{\ell}}'-1} \sigma_{k_m}\cdots \sigma_{k_{1}} G_{\boldsymbol\varepsilon}({\bf u}).
    \end{align}
    By Lemma \ref{lem:var:reduc} (ii), we have $\sigma_{k_m}\cdots \sigma_{k_{1}} G_{\boldsymbol\varepsilon}({\bf u}) = G_{\sigma_{k_m}\cdots \sigma_{k_{1}} \boldsymbol{\varepsilon}}(\bf u)$, which is a function depending only on $u_{j_1},\cdots,  u_{j_I}$. This proves the first part. 

    (ii). Using Lemma \ref{lem:var:reduc} and the notations in Definition \ref{def_non-trivial-Variable}, we can write
    \begin{align*}
        \sigma_{k_m}\cdots \sigma_{k_1} G_{\boldsymbol\varepsilon}({\bf u}) = G_{\sigma_{k_m}\cdots \sigma_{k_{1}} \boldsymbol{\varepsilon}}({\bf u})=
        \exp(\pi g_{\bf a}({\bf s})).
    \end{align*}
    Thanks to \eqref{sigmaG:commute} and Lemma \ref{lem:fdb}, we have
    \begin{align*}
        \boldsymbol\sigma G_{\boldsymbol\varepsilon}({\bf s})
        =\partial_{s_{p_1}} \cdots \partial_{s_{p_{\ell}}} \exp(\pi g_{\bf a}({\bf s}))
        = \sum_{P\in \mathscr{P}\{p_1,\dots,p_{\ell}\}} \pi^{\# P} \exp(\pi g_{\bf a}({\bf s})) \prod_{B \in P} g_{\bf a}^{(B)}({\bf s}).
    \end{align*}
    But by Lemma \ref{lem:dg} (iii), $g_a^{(B)}({\bf s}) = 0$ whenever $\# B \ge 3$.
    Hence, the preceding formula simplifies to
    \begin{align*}
        \boldsymbol\sigma G_{\boldsymbol\varepsilon}({\bf s})
        = \sum_{P\in \mathscr{P}_2\{p_1,\dots,p_{\ell}\}} \pi^{\# P} \exp(\pi g_{\bf a}({\bf s})) \prod_{B \in P} g_{\bf a}^{(B)}({\bf s}),
    \end{align*}
 which is our desired formula (\ref{E:fdb}). 
 The properties of $a_i$ follow from Lemma \ref{lem:var:reduc}.    
\end{proof}

\section{Variance estimates for fractional Brownian motion}\label{sec:variance}
In view of \eqref{E:fdb}, in order to estimate $\boldsymbol{\sigma}G_{\boldsymbol{\varepsilon}}({\bf s})$, we need to obtain estimates for the variance function \eqref{E:g_a(s)} and its first and second derivatives. 
These estimates will be provided in Propositions \ref{lem:LND}, \ref{lem:dg:bd} and \ref{lem:ddg:bd} below.

\subsection{Estimate for the variance function}

In order to establish the estimate, we first recall some results about variance and conditional variance.

\begin{lemma}\cite[Lemma 8.1]{B73}\label{lem:Berman}
    Let $(X_1, \dots, X_n)$ be a Gaussian vector with mean 0, and let
    \begin{align*}
        Y_1 = X_1, \qquad Y_j = X_j-X_{j-1} \quad \text{for $j=2,\dots,n$.}
    \end{align*}
    Then, for all $a_1, \dots, a_n \in \R$,
    \begin{align*}
        \Var\left( \sum_{j=1}^n a_j Y_j \right)
        \ge \frac{\det\mathrm{Cov}(X_1, \dots, X_n)}{n \prod_{j=1}^n \sigma_j^2} \sum_{j=1}^n a_j^2 \sigma_j^2,
    \end{align*}
    where $\sigma_j^2 = \Var(Y_j)$ and $\mathrm{Cov}(X_1, \dots, X_n)$ is the covariance matrix of $(X_1,\dots,X_n)$.
\end{lemma}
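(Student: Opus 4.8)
The plan is to pass from the increment vector to its correlation matrix and reduce everything to a spectral inequality for correlation matrices. First I would note that the map $(X_1,\dots,X_n)\mapsto(Y_1,\dots,Y_n)$ is linear with determinant $1$: writing $\mathbf{Y}=M\mathbf{X}$ as column vectors, $M$ is lower bidiagonal with $1$'s on the diagonal and $-1$'s on the first subdiagonal, so $\det M=1$. Consequently $\Sigma_Y:=\mathrm{Cov}(Y_1,\dots,Y_n)=M\,\mathrm{Cov}(X_1,\dots,X_n)\,M^{\top}$ and $\det\Sigma_Y=\det\mathrm{Cov}(X_1,\dots,X_n)$. If $\sigma_j=0$ for some $j$, the $j$-th row of $\Sigma_Y$ vanishes, so $\det\mathrm{Cov}(X_1,\dots,X_n)=0$ and the asserted inequality is trivially true; hence I may assume $\sigma_j>0$ for every $j$.

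Next I would factor $\Sigma_Y=DRD$ with $D=\mathrm{diag}(\sigma_1,\dots,\sigma_n)$ and $R$ the correlation matrix of $(Y_1,\dots,Y_n)$, so that $R$ is positive semidefinite with every diagonal entry equal to $1$. Setting $\mathbf{b}=D\mathbf{a}$, i.e.\ $b_j=a_j\sigma_j$, one has
\[
\Var\!\Big(\sum_{j=1}^n a_j Y_j\Big)=\mathbf{a}^{\top}\Sigma_Y\,\mathbf{a}=\mathbf{b}^{\top}R\,\mathbf{b}\ \ge\ \lambda_{\min}(R)\,\|\mathbf{b}\|^2=\lambda_{\min}(R)\sum_{j=1}^n a_j^2\sigma_j^2,
\]
where $\lambda_{\min}(R)$ denotes the smallest eigenvalue of $R$. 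Since $\det\Sigma_Y=\big(\prod_{j}\sigma_j^2\big)\det R$, the proof is finished once we establish $\lambda_{\min}(R)\ge \det R/n$: this gives $\lambda_{\min}(R)\ge\det\mathrm{Cov}(X_1,\dots,X_n)\big/\big(n\prod_j\sigma_j^2\big)$, which is exactly the constant in the statement.

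For the spectral bound, let $\lambda_1\le\cdots\le\lambda_n$ be the eigenvalues of $R$; they are nonnegative with $\sum_j\lambda_j=\mathrm{tr}(R)=n$. The case $n=1$ is trivial, and for $n\ge2$ the AM--GM inequality applied to $\lambda_2,\dots,\lambda_n$ yields
\[
\prod_{j=2}^n\lambda_j\ \le\ \Big(\tfrac{1}{n-1}\sum_{j=2}^n\lambda_j\Big)^{n-1}\ \le\ \Big(\tfrac{n}{n-1}\Big)^{n-1}\ \le\ n,
\]
where the last inequality holds because $(n/(n-1))^{n-1}$ increases in $n$ to $e<3$ and equals $2$ at $n=2$. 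Hence $\det R=\lambda_1\prod_{j=2}^n\lambda_j\le n\lambda_1$, that is $\lambda_{\min}(R)=\lambda_1\ge\det R/n$, and combining this with the displayed chain proves the lemma. The only genuinely delicate point is this last inequality: the crude bound $\lambda_j\le\mathrm{tr}(R)=n$ for each $j$ would give only $\lambda_{\min}(R)\ge\det R/n^{\,n-1}$, so the AM--GM refinement on the product of the remaining eigenvalues is what recovers the sharp factor $1/n$. Everything else — the triangular change of variables and the vanishing-$\sigma_j$ case — is routine.
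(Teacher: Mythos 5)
The paper cites this result from Berman \cite{B73} without reproducing a proof, so there is no in-paper argument to compare against; I will simply assess your proof on its own terms. Your argument is correct and is essentially the standard proof of Berman's inequality: pass to the increment covariance via the unit-determinant bidiagonal change of variables, factor out the diagonal of standard deviations to expose the correlation matrix $R$, and then reduce to the spectral bound $\lambda_{\min}(R)\ge \det R/n$, which you establish cleanly by applying AM--GM to the $n-1$ largest eigenvalues of $R$ (using $\mathrm{tr}\,R=n$ and $(n/(n-1))^{n-1}\le n$). All the numerical checks in that last step hold: equality at $n=2$, and for $n\ge 3$ the quantity $(1+1/(n-1))^{n-1}$ is below $e<3\le n$.

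One small formality worth tightening: when some $\sigma_j=0$, the right-hand side of the asserted inequality is a $0/0$ expression, so it is not so much ``trivially true'' as undefined; the lemma should be read as implicitly assuming $\sigma_j>0$ for all $j$, which is automatic in the paper's application since there $\sigma_j^2=(t_j-t_{j-1})^{2H}>0$. With that caveat noted, the proof is complete and correct.
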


\begin{lemma}\cite[Corollary A.2]{KM20}\label{lem:det:cov}
    For any centered Gaussian vector $(Z_1, \dots, Z_n)$,
    \begin{align*}
        &\det \mathrm{Cov}{(Z_1,\dots,Z_n)}
        = \Var(Z_1) \prod_{j=2}^n \Var(Z_j|Z_1,\dots,Z_{j-1}).
    \end{align*}
\end{lemma}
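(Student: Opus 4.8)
The plan is to prove the identity by sequentially orthogonalizing the Gaussian vector, i.e., by the Gram--Schmidt (innovations) construction. First I would set $W_1 = Z_1$ and, for $2 \le j \le n$, define $W_j = Z_j - \mathbb{E}[Z_j \mid Z_1, \dots, Z_{j-1}]$, the residual of the linear regression of $Z_j$ on the preceding coordinates. Since $(Z_1,\dots,Z_n)$ is jointly Gaussian, each conditional expectation $\mathbb{E}[Z_j \mid Z_1,\dots,Z_{j-1}]$ is a deterministic linear combination $\sum_{i<j} c_{ji} Z_i$, so there is a lower-triangular matrix $A$ with unit diagonal (entries $a_{jj}=1$, $a_{ji}=-c_{ji}$ for $i<j$, and $a_{ji}=0$ for $i>j$) with $W = AZ$, where $W=(W_1,\dots,W_n)^{\mathsf T}$ and $Z = (Z_1,\dots,Z_n)^{\mathsf T}$.

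Next I would record the two standard facts that drive the argument. (a) By the defining projection property, $W_j$ is uncorrelated with each of $Z_1,\dots,Z_{j-1}$, hence with $W_1,\dots,W_{j-1}$; since $W$, being a linear image of $Z$, is jointly Gaussian, the $W_j$ are mutually independent, and so $\mathrm{Cov}(W)$ is the diagonal matrix with entries $\Var(W_1),\dots,\Var(W_n)$. (b) For a jointly Gaussian vector the conditional variance is deterministic and equals the variance of the regression residual, so $\Var(Z_j\mid Z_1,\dots,Z_{j-1}) = \Var(W_j)$ for $j\ge 2$, while trivially $\Var(W_1) = \Var(Z_1)$.

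I would then conclude using the change-of-variables rule $\mathrm{Cov}(W) = A\,\mathrm{Cov}(Z)\,A^{\mathsf T}$: since $A$ is lower triangular with unit diagonal, $\det A = 1$, so
\[
\det\mathrm{Cov}(Z_1,\dots,Z_n) = \det\mathrm{Cov}(W) = \prod_{j=1}^n \Var(W_j) = \Var(Z_1)\prod_{j=2}^n \Var(Z_j \mid Z_1,\dots,Z_{j-1}),
\]
which is exactly the claimed formula.

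The one point that needs care --- and really the only obstacle --- is the degenerate case. If $\mathrm{Cov}(Z_1,\dots,Z_{j-1})$ is singular the coefficients $c_{ji}$ are not unique, but the orthogonal projection of $Z_j$ onto $\mathrm{span}(Z_1,\dots,Z_{j-1})$ is still a well-defined random variable, so $W_j$ and $\Var(W_j)$ are unambiguous; and if $Z_j$ lies in that span then $W_j=0$ and both sides of the identity vanish, so the formula still holds. An alternative is induction on $n$ via the block-determinant (Schur complement) identity, which writes $\det\mathrm{Cov}(Z_1,\dots,Z_n)$ as $\det\mathrm{Cov}(Z_1,\dots,Z_{n-1})$ times a Schur complement one recognizes as $\Var(Z_n\mid Z_1,\dots,Z_{n-1})$; this is quick but needs invertibility at each stage, so the singular case must still be dispatched separately. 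I would favor the Gram--Schmidt argument because it treats both cases uniformly, and this bookkeeping is really the only genuine work --- everything else is standard linear algebra and Gaussian theory.
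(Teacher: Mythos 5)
Your argument is correct. Note, however, that the paper does not prove this lemma at all; it simply cites it as \cite[Corollary A.2]{KM20}, so there is no in-paper proof to compare against. Your Gram--Schmidt (innovations) derivation is the standard one: writing $W = AZ$ with $A$ unit lower triangular gives $\det\mathrm{Cov}(Z) = \det\mathrm{Cov}(W) = \prod_j \Var(W_j)$, and Gaussianity identifies $\Var(W_j)$ with $\Var(Z_j \mid Z_1,\dots,Z_{j-1})$. Your handling of the degenerate case is right and worth keeping: when $\mathrm{Cov}(Z_1,\dots,Z_{j-1})$ is singular the projection $W_j$ is still uniquely defined even though the coefficients are not, and if some $Z_j$ lies in the span of its predecessors both sides vanish. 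The alternative Schur-complement induction you mention would also work and is probably closer to what a reference titled ``Corollary A.2'' is built on, but as you observe it needs the nonsingular case handled first and the singular case patched by a limiting or separate argument, so the innovations proof is the cleaner self-contained route.
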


\begin{lemma}\cite[Lemma A.4]{KM20}\label{lem:CVar:proj}
    Let $\mathcal{H} \subset L^2(\Omega)$ be a Hilbert space with inner product $\langle X, Y \rangle_{L^2(\Omega)} = \E[XY]$ such that for any $n \in \N_+$, for any $X_1, \dots, X_n \in \mathcal{H}$, $\{X_i\}_{i=1}^n$ is a family of jointly Gaussian random variables with mean 0.
    Let $X\in\mathcal{H}$ and $\mathcal{G}$ be a closed linear subspace of $\mathcal{H}$.
    Then,
    \[
        \Var(X|\mathcal{G}) = \|X-P_{\mathcal{G}}(X)\|_{L^2(\Omega)}^2,
    \]
    where $P_{\mathcal{G}}(X)$ denotes the orthogonal projection of $X$ onto $\mathcal{G}$.
\end{lemma}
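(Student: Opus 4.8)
The plan is to exploit the defining feature of a Gaussian Hilbert space: within $\mathcal{H}$, orthogonality in $L^2(\Omega)$ is the same as stochastic independence. Once this is available, the conditional expectation of $X$ given $\mathcal{G}$ is forced to be the orthogonal projection $P_{\mathcal{G}}(X)$, and the conditional variance becomes deterministic and equal to the squared norm of the residual. Here I read $\Var(X\mid\mathcal{G})$ as the conditional variance of $X$ given the $\sigma$-algebra $\sigma(\mathcal{G})$ generated by the family of random variables $\mathcal{G}$.

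First I would decompose $X = P_{\mathcal{G}}(X) + R$, where $R := X - P_{\mathcal{G}}(X)$. Since $\mathcal{G}$ is a closed linear subspace of $\mathcal{H}$, the projection $P_{\mathcal{G}}(X)$ lies in $\mathcal{G}\subset\mathcal{H}$, so $R\in\mathcal{H}$ as well, and by the variational characterization of orthogonal projection $\E[RY]=0$ for all $Y\in\mathcal{G}$. Also, $P_{\mathcal{G}}(X)$ is an element of $\mathcal{G}$ and hence $\sigma(\mathcal{G})$-measurable.

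Next I would promote orthogonality to independence. Fix any finite family $Y_1,\dots,Y_n\in\mathcal{G}$. By the hypothesis on $\mathcal{H}$, the vector $(R,Y_1,\dots,Y_n)$ is jointly Gaussian with mean zero, and its covariances $\E[RY_i]$ all vanish; for jointly Gaussian random variables, zero covariance is equivalent to independence, so $R$ is independent of $(Y_1,\dots,Y_n)$. Since this holds for every finite subfamily of $\mathcal{G}$, and events of the form $\{(Y_1,\dots,Y_n)\in B\}$ form a $\pi$-system generating $\sigma(\mathcal{G})$, a Dynkin $\pi$-$\lambda$ argument shows that $R$ is independent of the full $\sigma$-algebra $\sigma(\mathcal{G})$. (Alternatively, one may quote the structure theory of Gaussian Hilbert spaces: $\mathcal{H} = \mathcal{G}\oplus\mathcal{G}^{\perp}$ as Hilbert spaces, the $\sigma$-algebras $\sigma(\mathcal{G})$ and $\sigma(\mathcal{G}^{\perp})$ are independent, and $R\in\mathcal{G}^{\perp}$.)

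Finally I would compute. Since $P_{\mathcal{G}}(X)$ is $\sigma(\mathcal{G})$-measurable, $R$ is independent of $\sigma(\mathcal{G})$, and $\E[R]=0$, we get $\E[X\mid\sigma(\mathcal{G})] = P_{\mathcal{G}}(X)$, whence
\[
\Var(X\mid\mathcal{G}) = \E\bigl[(X-P_{\mathcal{G}}(X))^2 \,\big|\, \sigma(\mathcal{G})\bigr] = \E[R^2\mid\sigma(\mathcal{G})] = \E[R^2] = \|X-P_{\mathcal{G}}(X)\|_{L^2(\Omega)}^2,
\]
using independence of $R$ and $\sigma(\mathcal{G})$ once more in the penultimate step. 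The main obstacle — if anything here deserves that name — is precisely the upgrade from finite-dimensional uncorrelatedness to independence from all of $\sigma(\mathcal{G})$; the rest is standard Gaussian bookkeeping, and if one is willing to invoke the Gaussian Hilbert space machinery outright, the identity is essentially immediate.
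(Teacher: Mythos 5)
The paper does not supply its own proof of this lemma; it cites it directly from \cite{KM20}. Your argument is correct and is the standard derivation: decompose $X = P_{\mathcal{G}}(X) + R$ with $R$ orthogonal to $\mathcal{G}$, observe that in a Gaussian linear space uncorrelatedness of finite families implies independence, upgrade this to independence of $R$ from the full $\sigma$-algebra $\sigma(\mathcal{G})$ by a $\pi$--$\lambda$ argument, and then read off $\E[X\mid\sigma(\mathcal{G})] = P_{\mathcal{G}}(X)$ and $\Var(X\mid\mathcal{G}) = \E[R^2] = \|X - P_{\mathcal{G}}(X)\|_{L^2(\Omega)}^2$. This is exactly the content of the cited reference, so there is nothing to compare beyond noting that your reasoning is sound, complete, and uses the right $\pi$-system (cylinder sets over finite subfamilies of $\mathcal{G}$) to justify the passage to the generated $\sigma$-algebra.
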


\begin{proposition}\label{lem:CVar:mono}
    Let $\mathcal{H} \subset L^2(\Omega)$ be a Hilbert space with respect to the inner product $\langle X, Y \rangle_{L^2(\Omega)} = \E[XY]$ such that for any $n \in \N_+$, for any $X_1, \dots, X_n \in \mathcal{H}$, $\{X_i\}_{i=1}^n$ is a family of jointly Gaussian random variables with mean 0.
    Let $X\in \mathcal{H}$ and $\mathcal{A}_1$, $\mathcal{A}_2$ be two subsets of $\mathcal{H}$.
    If $\mathcal{A}_1\subset \mathcal{A}_2$, then
    \begin{align*}
        \Var(X|\mathcal{A}_1) \ge \Var(X|\mathcal{A}_2).
    \end{align*}
\end{proposition}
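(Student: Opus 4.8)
The plan is to deduce Proposition \ref{lem:CVar:mono} from Lemma \ref{lem:CVar:proj} by turning the conditioning sets into closed linear subspaces and then invoking the elementary fact that the distance from a point to a larger subspace is smaller. Throughout, for a subset $\mathcal{A}\subset\mathcal{H}$ let $\mathcal{G}(\mathcal{A})$ denote the closure in $L^2(\Omega)$ of the linear span of $\mathcal{A}$; since $\mathcal{H}$ is a closed subspace of $L^2(\Omega)$ and $\mathcal{A}\subset\mathcal{H}$, we have $\mathcal{G}(\mathcal{A})\subset\mathcal{H}$, so $\mathcal{G}(\mathcal{A})$ is a closed linear subspace of $\mathcal{H}$ to which Lemma \ref{lem:CVar:proj} applies.

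The first step is to check that $\Var(X\mid\mathcal{A})=\Var(X\mid\mathcal{G}(\mathcal{A}))$ for every $\mathcal{A}\subset\mathcal{H}$; this is where the Gaussian hypothesis enters. On the one hand $\mathcal{A}\subset\mathcal{G}(\mathcal{A})$, so $\sigma(\mathcal{A})\subset\sigma(\mathcal{G}(\mathcal{A}))$. On the other hand, every $Y\in\mathcal{G}(\mathcal{A})$ is an $L^2$-limit of finite linear combinations of elements of $\mathcal{A}$; extracting an almost surely convergent subsequence shows that $Y$ is measurable with respect to the completion of $\sigma(\mathcal{A})$. Hence $\sigma(\mathcal{A})$ and $\sigma(\mathcal{G}(\mathcal{A}))$ agree up to null sets, and the conditional expectations and conditional variances with respect to them coincide. (If one prefers, one simply takes this identity as the definition of conditioning on a subset, in which case this step is vacuous.)

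Next I would apply Lemma \ref{lem:CVar:proj} with $\mathcal{G}=\mathcal{G}(\mathcal{A}_i)$ for $i=1,2$, obtaining
\[
\Var(X\mid\mathcal{A}_i)=\Var(X\mid\mathcal{G}(\mathcal{A}_i))=\bigl\|X-P_{\mathcal{G}(\mathcal{A}_i)}(X)\bigr\|_{L^2(\Omega)}^2 .
\]
Since $\mathcal{A}_1\subset\mathcal{A}_2$, the linear spans are nested and therefore $\mathcal{G}(\mathcal{A}_1)\subset\mathcal{G}(\mathcal{A}_2)$.

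The final step is the Hilbert-space inequality: if $\mathcal{G}_1\subset\mathcal{G}_2$ are closed subspaces, then for every $X$,
\[
\bigl\|X-P_{\mathcal{G}_2}(X)\bigr\|_{L^2(\Omega)}=\inf_{Z\in\mathcal{G}_2}\|X-Z\|_{L^2(\Omega)}\le\inf_{Z\in\mathcal{G}_1}\|X-Z\|_{L^2(\Omega)}=\bigl\|X-P_{\mathcal{G}_1}(X)\bigr\|_{L^2(\Omega)},
\]
the middle inequality being immediate from $\mathcal{G}_1\subset\mathcal{G}_2$ (equivalently, $P_{\mathcal{G}_1}(X)\in\mathcal{G}_1\subset\mathcal{G}_2$ together with the minimizing property of $P_{\mathcal{G}_2}$). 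Squaring and combining with the previous display yields $\Var(X\mid\mathcal{A}_1)\ge\Var(X\mid\mathcal{A}_2)$, as desired. The only nonroutine ingredient is the first step, identifying conditioning on a Gaussian family with orthogonal projection onto its closed linear span; everything else is elementary Hilbert-space geometry.
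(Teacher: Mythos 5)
Your proof is correct and follows essentially the same route as the paper: pass from the conditioning sets $\mathcal{A}_i$ to their closed linear spans $\mathcal{G}(\mathcal{A}_i)$, note the generated $\sigma$-algebras agree (up to null sets), apply Lemma~\ref{lem:CVar:proj} to express conditional variance as squared distance to the subspace, and invoke the distance-minimizing property of projections on nested subspaces. One small attribution quibble: the $\sigma$-algebra identification step you single out is purely an $L^2$/measurability argument and does not use Gaussianity; the Gaussian hypothesis is actually consumed inside Lemma~\ref{lem:CVar:proj}, which equates conditional variance with the squared projection residual.
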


\begin{proof}
    For $i=1,2$, let $\mathcal{G}_i$ be the closed linear subspace spanned by $\mathcal{A}_i$.
    By Lemma \ref{lem:CVar:proj} above,
    \[
    \Var(X|\mathcal{G}_i) = \|X-P_{\mathcal{G}_i}X\|_{L^2(\Omega)}^2,
    \]
    where $P_{\mathcal{G}_i}X$ denotes the orthogonal projection of $X$ onto the subspace $\mathcal{G}_i$.
    Note that $\mathcal{A}_i$ and $\mathcal{G}_i$ generate the same $\sigma$-algebra.
    It follows that
    \[
    \Var(X|\mathcal{A}_i) = \Var(X|\mathcal{G}_i) = \|X-P_{\mathcal{G}_i}X\|_{L^2(\Omega)}^2\quad \text{for $i=1,2$.}
    \]
    By the distance-minimizing property of orthogonal projections \cite[Theorem 2.6]{Conway} and by $\mathcal{G}_1 \subset \mathcal{G}_2$, we have
    \[
    \|X-P_{\mathcal{G}_1}X\|_{L^2(\Omega)}^2
    = \inf_{Y\in \mathcal{G}_1}\|X-Y\|_{L^2(\Omega)}^2 \ge \inf_{Y\in \mathcal{G}_2}\|X-Y\|_{L^2(\Omega)}^2 = \|X-P_{\mathcal{G}_2}X\|_{L^2(\Omega)}^2.
    \]
    Combining the last two displays completes the proof of the lemma.
\end{proof}

The lemma below is due to Pitt \cite{P78} and is known as the strong local nondeterminism property of fractional Brownian motion.

\begin{lemma}\cite[Lemma 7.1]{P78}\label{lem:Pitt}
    Let $\{B(t): t \ge 0\}$ be a fractional Brownian motion with Hurst parameter $H\in(0,1)$.
    Then, there exists a constant $C_H>0$ such that for any $t> 0$ and any $r\in (0, t]$,
    \begin{align*}
        \Var(B(t)|B(s):|s-t| \ge r) = C_H r^{2H}.
    \end{align*}
\end{lemma}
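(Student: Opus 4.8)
The plan is to compute the conditional variance directly by exploiting the self-similarity and stationarity of increments of fractional Brownian motion, reducing the general case to a single normalized quantity. First I would use self-similarity: since $\{B(ct):t\ge0\}$ and $\{c^H B(t):t\ge0\}$ have the same finite-dimensional distributions, for any $t>0$ and $r\in(0,t]$ we have
\begin{align*}
    \Var(B(t)\mid B(s):|s-t|\ge r) = t^{2H}\,\Var\bigl(B(1)\mid B(s):|s-1|\ge r/t\bigr).
\end{align*}
Writing $\rho = r/t \in (0,1]$, it therefore suffices to show that $\Var(B(1)\mid B(s):|s-1|\ge\rho)$ is a function of $\rho$ alone of the form $C_H\rho^{2H}/\,\text{(something)}$; in fact a second scaling around the point $1$ should collapse the $\rho$-dependence. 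The cleaner route is to center at the conditioning point: by stationarity of increments, $B(t)-B(t-r)$ is independent of $\sigma(B(s):|s-t|\ge r)$ is \emph{false} in general for $H\ne1/2$, so instead I would argue via the projection characterization of conditional variance (Lemma \ref{lem:CVar:proj}), realizing $\Var(B(t)\mid B(s):|s-t|\ge r)$ as the squared $L^2$-distance from $B(t)$ to the closed span of $\{B(s):|s-t|\ge r\}$, and then apply the self-similar change of variables $s\mapsto (s-t)/r + 1$ (or work in the $\mathcal H$-space of increments) to see that this distance equals $r^{2H}$ times a universal constant $C_H := \Var(B(1)\mid B(s):|s-1|\ge1) = \|B(1)-P_{\mathcal G_1}B(1)\|_{L^2}^2$, where $\mathcal G_1 = \overline{\mathrm{span}}\{B(s):|s-1|\ge1\}$.

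Concretely, the key steps in order are: (1) invoke Lemma \ref{lem:CVar:proj} to rewrite the conditional variance as $\|B(t)-P_{\mathcal G}B(t)\|_{L^2}^2$ with $\mathcal G = \overline{\mathrm{span}}\{B(s):|s-t|\ge r\}$; (2) apply the scaling map $T_{t,r}:\mathbb R\to\mathbb R$, $T_{t,r}(u) = t + r u$, and use self-similarity together with stationarity of increments to identify the law of $\{B(T_{t,r}(u))-B(t):u\in\mathbb R\}$ with $r^H$ times that of $\{B(u)-B(0)\}$ up to the additive constant $B(t)$, so that the projection problem for $B(t)$ onto $\mathcal G$ is, after subtracting the (conditionally irrelevant) span and rescaling, isometric to the projection problem for $B(1)$ onto $\overline{\mathrm{span}}\{B(s):|s-1|\ge1\}$ scaled by $r^{2H}$; (3) conclude $\Var(B(t)\mid B(s):|s-t|\ge r) = C_H r^{2H}$ with $C_H$ the corresponding normalized conditional variance; (4) check $C_H>0$, which holds because $B(1)$ does not lie in the closed span of $\{B(s):|s-1|\ge1\}$ — this follows from, e.g., the fact that fBm has a spectral/moving-average representation in which the "local" part near $t=1$ is not determined by the "far" part, or more elementarily from Lemma \ref{lem:Berman} combined with Lemma \ref{lem:det:cov} applied to a finite subfamily to get a strictly positive lower bound that survives the limit.

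The main obstacle I anticipate is Step (2): carefully justifying that the change of variables $s\mapsto t+ru$ induces an isometry between the relevant Hilbert subspaces of $L^2(\Omega)$ that intertwines the two projection problems, and in particular handling the fact that the conditioning set $\{|s-t|\ge r\}$ is unbounded so one is projecting onto an infinite-dimensional closed subspace. The subtlety is that self-similarity gives equality of finite-dimensional distributions, so to pass to the infinite conditioning family one should either (a) approximate by finite subsets $\{s_1,\dots,s_n\}$ with $|s_i-t|\ge r$, apply Lemmas \ref{lem:Berman}--\ref{lem:det:cov} and \ref{lem:CVar:mono} to get matching monotone limits on both sides, or (b) note that the linear span of the increment process over $\mathbb R$ is separable so the isometry extends by continuity. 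A secondary (easier) point is verifying that translating by the constant $B(t)$ does not change the conditional variance, which is immediate since $B(t)$ is being conditioned and $\Var(X\mid\mathcal G)$ is translation-invariant under adding elements of the conditioning $\sigma$-algebra only up to the projection — here one simply uses $\Var(B(t)\mid\mathcal G) = \Var(B(t)-Y\mid\mathcal G)$ for any $Y\in\mathcal G$, choosing $Y$ appropriately. I would present the argument in the projection language throughout, as it makes both the scaling and the positivity transparent.
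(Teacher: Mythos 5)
The paper does not prove this lemma---it is imported verbatim from Pitt's paper \cite[Lemma 7.1]{P78}, where the argument runs through the spectral (Fourier) representation of fBm. So there is no internal proof to compare against; the benchmark is Pitt's argument.

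Your Step (2) contains a genuine gap. The process here is one-sided fBm on $[0,\infty)$, so the conditioning family is $\{B(s): s\in[0,t-r]\cup[t+r,\infty)\}$. Under your change of variables $s\mapsto(s-t)/r+1$ this is carried, via stationarity of increments, to $\{\tilde B(w): w\in[1-t/r,\,0]\cup[2,\infty)\}$ for a two-sided fBm $\tilde B$---a set that still depends on the ratio $t/r$, because the boundary point $s=0$ of the index set is carried along. It does not collapse to $\{0\}\cup[2,\infty)$ except in the degenerate case $t=r$. For $r<t$ the rescaled set is strictly larger, and Proposition \ref{lem:CVar:mono} then only yields the sandwich
\begin{align*}
\Var\bigl(\tilde B(1)\mid\tilde B(w):w\in\R,\ |w-1|\ge1\bigr)\ \le\ r^{-2H}\,\Var\bigl(B(t)\mid B(s):s\ge0,\ |s-t|\ge r\bigr)\ \le\ C_H,
\end{align*}
with $C_H$ your proposed constant. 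For $H\ne1/2$ the middle quantity genuinely varies with $t/r$ (fBm is not Markov, so conditioning on more of the far past strictly decreases the conditional variance), so the isometry between the two projection problems that you assert does not exist. The exact equality with a single $t/r$-independent constant is really a two-sided statement (fBm indexed by $\R$ or $\R^N$, Pitt's actual setting); the one-sided form stated in the paper is a mild abuse, harmless for the application since Proposition \ref{lem:LND} only uses the lower bound, but it does sink your reduction.

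Separately, your positivity argument for $C_H>0$ does not work as stated. Lemma \ref{lem:det:cov} writes $\Var(B(t)\mid B(s_2),\dots,B(s_n))$ as a ratio of covariance determinants, and both numerator and denominator degenerate as the finite subfamily densifies in $\{s:|s-t|\ge r\}$, so there is no uniform finite-dimensional lower bound that ``survives the limit''; appealing to Lemma \ref{lem:Berman} does not repair this. Establishing $C_H>0$ is precisely where Pitt's spectral representation is needed: he expresses $\Var\bigl(B(t)-\sum_j a_jB(s_j)\bigr)$ as a weighted $L^2$ integral against the spectral density $\propto|\xi|^{-1-2H}$ and lower-bounds it by testing against a bump function supported in $(t-r,t+r)$, which no linear combination of $\{e^{is_j\xi}-1\}$ with $|s_j-t|\ge r$ can annihilate. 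Your projection framing and your first self-similarity step are sound, but neither replaces this Fourier-analytic input.
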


In the next lemma, we provide a key estimate for the variance function \eqref{E:g_a(s)}.

\begin{proposition}\label{lem:LND}
    Let $H\in(0,1)$.
    Then, there exists a constant $C_H>0$ depending only on $H$ such that for all integers $n \ge 1$, for all $0<t_1<\cdots<t_n$, for all $\alpha_1, \dots, \alpha_n \in \R$,
    \begin{align*}
        \Var\left( \sum_{j=1}^n \alpha_j B(t_j) \right) \ge \frac{C_H^{n-1}}{n} \sum_{j=1}^n (\alpha_j+\alpha_{j+1}+\dots+\alpha_n)^2 (t_j-t_{j-1})^{2H},
    \end{align*}
    where $t_0=0$.
\end{proposition}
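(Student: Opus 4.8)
The plan is to reduce the statement of Proposition~\ref{lem:LND} to the already-collected toolbox: Bermans's inequality (Lemma~\ref{lem:Berman}), the determinant factorization for Gaussian covariance matrices (Lemma~\ref{lem:det:cov}), the monotonicity of conditional variance under enlarging the conditioning family (Proposition~\ref{lem:CVar:mono}), and Pitt's strong local nondeterminism (Lemma~\ref{lem:Pitt}). First I would apply Lemma~\ref{lem:Berman} with $X_j = B(t_j)$ and $Y_j = X_j - X_{j-1}$ (so $Y_1 = B(t_1)$, and note $\sigma_j^2 = \Var(Y_j) = \Var(B(t_j)-B(t_{j-1})) = (t_j-t_{j-1})^{2H}$ by stationarity of increments of fBm). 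Writing $\sum_{j=1}^n \alpha_j B(t_j) = \sum_{j=1}^n (\alpha_j + \alpha_{j+1} + \dots + \alpha_n) Y_j$ (the standard Abel/summation-by-parts rewriting of a linear combination of $B(t_j)$ in terms of its increments), Lemma~\ref{lem:Berman} gives
\[
\Var\left( \sum_{j=1}^n \alpha_j B(t_j) \right) \ge \frac{\det \mathrm{Cov}(B(t_1),\dots,B(t_n))}{n \prod_{j=1}^n (t_j-t_{j-1})^{2H}} \sum_{j=1}^n (\alpha_j+\dots+\alpha_n)^2 (t_j-t_{j-1})^{2H}.
\]
So it remains to show that $\det \mathrm{Cov}(B(t_1),\dots,B(t_n)) \ge C_H^{n-1} \prod_{j=1}^n (t_j-t_{j-1})^{2H}$ — wait, one factor is off by one, and indeed the clean bound I expect is $\det \mathrm{Cov} \ge C_H^{n-1}\prod_{j=2}^n (t_j - t_{j-1})^{2H}$ together with $\Var(B(t_1)) = t_1^{2H} = (t_1 - t_0)^{2H}$.

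Next I would estimate the determinant. By Lemma~\ref{lem:det:cov},
\[
\det \mathrm{Cov}(B(t_1),\dots,B(t_n)) = \Var(B(t_1)) \prod_{j=2}^n \Var\big(B(t_j) \mid B(t_1),\dots,B(t_{j-1})\big),
\]
and $\Var(B(t_1)) = t_1^{2H}$. For each $j \ge 2$, the conditioning set $\{B(t_1),\dots,B(t_{j-1})\}$ is contained in the larger set $\{B(s) : |s - t_j| \ge r_j\}$ where I take $r_j := t_j - t_{j-1}$ (so that all of $t_1,\dots,t_{j-1}$ lie at distance $\ge r_j$ from $t_j$ on the left, since $t_1 < \dots < t_{j-1} \le t_j - r_j$); note $r_j \in (0, t_j]$ as required by Lemma~\ref{lem:Pitt} because $r_j = t_j - t_{j-1} < t_j$. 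By Proposition~\ref{lem:CVar:mono} (monotonicity: a smaller conditioning family gives a larger conditional variance),
\[
\Var\big(B(t_j) \mid B(t_1),\dots,B(t_{j-1})\big) \ge \Var\big(B(t_j) \mid B(s) : |s - t_j| \ge r_j\big) = C_H\, r_j^{2H} = C_H (t_j - t_{j-1})^{2H},
\]
the middle equality being exactly Pitt's Lemma~\ref{lem:Pitt}. Multiplying these $n-1$ inequalities together with the $j=1$ term yields $\det \mathrm{Cov}(B(t_1),\dots,B(t_n)) \ge C_H^{n-1} \prod_{j=1}^n (t_j - t_{j-1})^{2H}$, and substituting back into the displayed consequence of Berman's inequality cancels the product $\prod_{j=1}^n (t_j - t_{j-1})^{2H}$ in the denominator, leaving precisely
\[
\Var\left( \sum_{j=1}^n \alpha_j B(t_j) \right) \ge \frac{C_H^{n-1}}{n} \sum_{j=1}^n (\alpha_j+\alpha_{j+1}+\dots+\alpha_n)^2 (t_j-t_{j-1})^{2H}.
\]

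The only genuinely delicate point is the application of Proposition~\ref{lem:CVar:mono} with the infinite conditioning family $\{B(s) : |s - t_j| \ge r_j\}$: one must make sure the abstract Hilbert-space framework in that proposition (and in Lemmas~\ref{lem:det:cov}, \ref{lem:CVar:proj}) is set up so that conditioning on an uncountable Gaussian family is interpreted as conditioning on the closed linear span, which is exactly how Pitt's lemma is phrased and how Proposition~\ref{lem:CVar:mono} is stated; since $\{B(t_1),\dots,B(t_{j-1})\} \subset \{B(s):|s-t_j|\ge r_j\}$ as subsets of $\mathcal H = \overline{\mathrm{span}}\{B(t): t \ge 0\}$, the inclusion hypothesis is met directly. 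I expect essentially no other obstacle; the summation-by-parts identity and the bookkeeping of which $t_i$ lie at distance $\ge r_j$ from $t_j$ are routine. (If one prefers, the whole estimate can even be proved without Berman by induction on $n$ using Lemma~\ref{lem:det:cov} and a direct lower bound on conditional variances, but routing through Lemma~\ref{lem:Berman} as above is the most economical given the lemmas already assembled.)
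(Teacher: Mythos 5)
Your proposal is correct and follows essentially the same route as the paper: rewrite the linear combination in terms of increments via Abel summation, apply Berman's inequality (Lemma~\ref{lem:Berman}), then bound the covariance determinant using Lemma~\ref{lem:det:cov}, the monotonicity of conditional variance (Proposition~\ref{lem:CVar:mono}), and Pitt's strong local nondeterminism (Lemma~\ref{lem:Pitt}). The only superficial difference is ordering — the paper first restates the goal in the increment coefficients $a_j$ and then applies Berman, while you apply Berman immediately after the Abel rewriting — but the substance, including the determinant bound $\det\mathrm{Cov}(B(t_1),\dots,B(t_n)) \ge C_H^{n-1}\prod_{j=1}^n (t_j-t_{j-1})^{2H}$, is identical.
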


\begin{proof}
    By rewriting the linear combination of $B(t_j)$ as a linear combination of increments (c.f. (\ref{E:increment}) replaced by $B(t)$), we see that it is equivalent to prove the existence of $C_H>0$ such that for all $n \ge 1$, for all $0<t_1<\dots<t_n$, for all $a_1, \dots, a_n\in \R$,
    \begin{align}\label{E:Var:LB}
        \Var\left( \sum_{j=1}^n a_j (B(t_j)-B(t_{j-1})) \right) \ge \frac{C_H^{n-1}}{n} \sum_{j=1}^n a_j^2 (t_j-t_{j-1})^{2H}.
    \end{align}
    In order to prove this, we first invoke Lemma \ref{lem:Berman} to find that
    \begin{align*}
        \Var\left( \sum_{j=1}^n a_j(B(t_j)-B(t_{j-1})) \right)
        \ge \frac{\det\mathrm{Cov}(B(t_1), \dots, B(t_n))}{n \prod_{j=1}^n (t_j-t_{j-1})^{2H}} \sum_{j=1}^n a_j^2 (t_j-t_{j-1})^{2H}.
    \end{align*}
    Then, by Lemma \ref{lem:det:cov} and Lemma \ref{lem:CVar:mono},
    \begin{align*}
        &\det\mathrm{Cov}(B(t_1), \dots, B(t_n))\\
        &= \Var(B(t_1)) \prod_{j=2}^n \Var(B(t_j)|B(t_1),\dots,B(t_{j-1}))\\
        &\ge \Var(B(t_1)) \prod_{j=2}^n \Var(B(t_j)|B(s): |s-t_j| \ge t_j-t_{j-1} ).
    \end{align*}
    Finally, we may appeal to Lemma \ref{lem:Pitt} to find a constant $C_H>0$ depending only on $H$ such that 
    \begin{align*}
        \Var(B(t_j)|B(s): |s-t_j| \ge t_j-t_{j-1} )=C_H (t_j-t_{j-1})^{2H}
    \end{align*}
    for all $n \in \N_+$, $0<t_1<\dots<t_n$ and $2\le j \le n$.
    This proves \eqref{E:Var:LB} and completes the proof of Lemma \ref{lem:LND}.
\end{proof}

\subsection{Estimates for the partial derivatives}

The following two propositions provide the key estimates for the first and second derivatives of $g_{\bf a}({\bf s})$ defined in \eqref{E:g_a(s)}.

\begin{proposition}\label{lem:dg:bd}
    Suppose $H\in(1/2,1)$.
    Let $I\in\N_+$ and ${\bf a} = (a_1,\dots,a_I) \in (\Z \setminus\{0\})^I$ satisfy $\sum_{i=1}^I a_i = 0$.
    Let $d\in \N_+$ and $1=i_1<i_2<\cdots<i_d < I$ be such that
    \begin{align}\label{E:a:cond}
        a_i + a_{i+1} + \cdots + a_I  \begin{cases}
            = 0 & \text{for } i\in \{i_1,\dots,i_d\},\\
            \ne 0 & \text{for }i\in \{1,\dots, I\} \setminus\{i_1,\dots,i_d\}.
        \end{cases}
    \end{align}
    Set $i_{d+1}=I+1$ and $\mathcal{J}^* = \{1,\dots, I\} \setminus\{i_1,\dots,i_d\}$.
    Then, for any $0<s_1<\cdots<s_I$ and $1\le k \le I$,
    \begin{align}\label{E:dg:bd}
        |\partial_{s_k} g_{\bf a}({\bf s})|
        \le 4H I \|{\bf a}\|_\infty^2 \sum_{i\in \mathcal{J}^*} (s_i-s_{i-1})^{2H-1}.
    \end{align}
\end{proposition}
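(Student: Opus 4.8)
The plan is to start from the explicit formula for the first derivative established in Lemma~\ref{lem:dg}(i), namely
\[
    \partial_{s_k} g_{\bf a}({\bf s})
    = 2H\sum_{j=1}^{k-1} a_k a_j (s_k-s_j)^{2H-1} - 2H \sum_{j=k+1}^{I} a_k a_j (s_j-s_k)^{2H-1},
\]
and to reorganize the two sums so that the power-function factors become \emph{consecutive} increments $(s_i-s_{i-1})^{2H-1}$. The key elementary observation is that, because $2H-1\in(0,1)$, the map $x\mapsto x^{2H-1}$ is subadditive, so for any $j<k$ one has $(s_k-s_j)^{2H-1}\le \sum_{i=j+1}^{k}(s_i-s_{i-1})^{2H-1}$, and similarly for $j>k$. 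I would first bound $|\partial_{s_k} g_{\bf a}({\bf s})|$ crudely by $2H\|{\bf a}\|_\infty^2\sum_{j\ne k}(|s_k-s_j|)^{2H-1}$, then apply subadditivity to replace each term $|s_k-s_j|^{2H-1}$ by a sum of at most $I$ consecutive increments raised to the power $2H-1$. This already gives a bound of the shape $2H I\|{\bf a}\|_\infty^2\sum_{i=1}^I (s_i-s_{i-1})^{2H-1}$, with the right power of $I$ but summing over \emph{all} increments rather than only those indexed by $\mathcal{J}^*$.

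The second and more delicate step is to exploit the cancellation condition \eqref{E:a:cond} to discard the increments $(s_{i}-s_{i-1})^{2H-1}$ for $i\notin\mathcal{J}^*$, i.e.\ for those $i=i_r$ where $a_{i_r}+\cdots+a_I=0$. The cleanest way I see is to \emph{not} go through the crude bound above, but rather to perform an Abel-type summation (summation by parts) directly on the formula for $\partial_{s_k}g_{\bf a}$. Writing $A_i := a_i + a_{i+1}+\cdots+a_I$, one has $a_i = A_i - A_{i+1}$, and the partial sums $A_i$ vanish precisely at the indices $i_1,\dots,i_d$. Rearranging $\partial_{s_k}g_{\bf a}({\bf s})$ in terms of the $A_i$'s and the differences $(s_{i}-s_{i-1})^{2H-1}$ should produce a telescoping structure in which every surviving term carries a factor $A_i$ with $i\in\mathcal{J}^*$ (hence $|A_i|\le I\|{\bf a}\|_\infty$ and $A_i\ne0$, but more to the point the index $i$ lies in $\mathcal{J}^*$), multiplied by a single increment $(s_i-s_{i-1})^{2H-1}$ and by $|a_k|\le\|{\bf a}\|_\infty$. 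Collecting the constants — one factor $2H$ from the derivative formula, one factor $\|{\bf a}\|_\infty$ from $|a_k|$, one factor $\|{\bf a}\|_\infty$ (times $I$) from $|A_i|$, and an extra factor of $2$ absorbing the boundary terms of the Abel summation — yields exactly the claimed bound $4HI\|{\bf a}\|_\infty^2\sum_{i\in\mathcal{J}^*}(s_i-s_{i-1})^{2H-1}$.

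The step I expect to be the main obstacle is the bookkeeping in the Abel summation: one must be careful that after summing by parts, the increments that appear are genuinely the consecutive ones $s_i-s_{i-1}$ and that the indices of the surviving coefficients land in $\mathcal{J}^*$ and not merely near it, and one must simultaneously handle the two sums (over $j<k$ and $j>k$) in the derivative formula, whose partial-sum structures run in opposite directions. A safe fallback, if the Abel summation turns out to be messy to present, is to combine the crude subadditivity bound with the following observation: when $a_{i_r}+\cdots+a_I=0$, the term $(s_{i_r}-s_{i_r-1})^{2H-1}$ can always be re-expressed, via a local rearrangement of the finite differences, as a combination of neighboring increments indexed in $\mathcal{J}^*$ with bounded coefficients, because the block structure induced by the vanishing partial sums partitions $\{1,\dots,I\}$ into sub-blocks on each of which the coefficients sum to zero. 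Either route reaches the same estimate; I would present whichever produces the cleaner constant, and I anticipate $4HI\|{\bf a}\|_\infty^2$ is comfortably attainable with room to spare.
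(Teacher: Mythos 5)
Your Abel-summation route is correct and, upon carrying it out, gives the bound with an even slightly better constant ($2HI$ in place of $4HI$). Writing $A_i = a_i+\cdots+a_I$, summation by parts in $\sum_{j<k} a_j(s_k-s_j)^{2H-1}$ yields boundary terms $A_1(s_k-s_1)^{2H-1}$ (which vanishes because $A_1=\sum_i a_i=0$) and $-A_k(s_k-s_{k-1})^{2H-1}$, plus $\sum_{j=2}^{k-1}A_j\bigl[(s_k-s_j)^{2H-1}-(s_k-s_{j-1})^{2H-1}\bigr]$; each bracket is bounded in absolute value by $(s_j-s_{j-1})^{2H-1}$ via the subadditivity of $x\mapsto x^{2H-1}$, and the coefficients $A_j$ vanish precisely when $j\notin\mathcal{J}^*$, so only indices in $\mathcal{J}^*$ survive. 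The sum over $j>k$ telescopes analogously (boundary terms $A_{k+1}(s_{k+1}-s_k)^{2H-1}$ and $A_{I+1}=0$), and the two index ranges $\{2,\dots,k\}$ and $\{k+1,\dots,I\}$ are disjoint, so there is no doubling. Combining with $|a_k|\le\|\mathbf{a}\|_\infty$ and $|A_j|\le I\|\mathbf{a}\|_\infty$ gives $|\partial_{s_k} g_{\mathbf a}|\le 2HI\|\mathbf{a}\|_\infty^2\sum_{j\in\mathcal{J}^*}(s_j-s_{j-1})^{2H-1}$. The paper exploits the same cancellation but organizes it by the block structure $[i_{j-1},i_j)$: inside each block not containing $k$ it substitutes $a_{i_{j-1}} = -(a_{i_{j-1}+1}+\cdots+a_{i_j-1})$ to form anchored differences $(s_k-s_i)^{2H-1}-(s_k-s_{i_{j-1}})^{2H-1}$, handles the block containing $k$ by the consecutive-increment chain bound, and then sums over the blocks — essentially your ``fallback'' route. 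Your global Abel summation is cleaner: the case distinction between the $k$-block and the others disappears, the block bookkeeping reduces to the single observation that $A_j=0$ for $j\notin\mathcal{J}^*$, and the constant improves by a factor of two.
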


\begin{proof}
    Let $j_k$ be the unique $j$ such that $i_{j-1} \le k < i_j$. Then, by Lemma \ref{lem:dg} (i), we can write
    \begin{align*}
        &\partial_{s_k} g_{\bf a}({\bf s})
        = 2H a_k \left[ \sum_{1\le i<k} a_i (s_k-s_i)^{2H-1} - \sum_{ k+1\le i\le I} a_i (s_i-s_k)^{2H-1} \right]\\
        &= 2H a_k \left[ \sum_{j=2}^{j_k-1}\sum_{ i_{j-1}\le i<i_j} a_i (s_k-s_i)^{2H-1} + \sum_{ i_{j_k-1} \le i \le k-1} a_i(s_k-s_i)^{2H-1} \right.\\
        & \quad \left. - \sum_{ k+1 \le i \le i_{j_k}} a_i(s_i-s_k)^{2H-1} -\sum_{j=j_k+1}^{d+1}\sum_{ i_{j-1}\le i < i_j} a_i (s_i-s_k)^{2H-1} \right].
    \end{align*}
    It follows from \eqref{E:a:cond} that, for each $2\le j \le d+1$,
    \[
        a_{i_{j-1}}+a_{i_{j-1}+1}+\cdots+a_{i_j-1} = 0
    \]
    since $ a_{i_{j}}+\cdots+ a_I=0$ and $a_{i_{j-1}}+\cdots+a_I=0$. Hence,
    \[
         a_{i_{j-1}} = -(a_{i_{j-1}+1}+\cdots+a_{i_j-1}).
    \]
    Then we can use the preceding to further write
    \begin{align}\label{eq_5.7}
        \partial_{s_k} g_{\bf a}({\bf s})
        = 2H a_k &\left[ \sum_{j=2}^{j_k-1}\sum_{ i_{j-1}< i<i_j} a_i \Big((s_k-s_i)^{2H-1}-(s_k-s_{i_{j-1}})^{2H-1}\Big) \right.\nonumber\\
        & + \sum_{i_{j_k-1} \le i \le k-1} a_i (s_k-s_i)^{2H-1} - \sum_{k+1\le i < i_{j_k}} a_i (s_i-s_k)^{2H-1}\\
        & \left. - \sum_{j=j_k}^{d+1}\sum_{ i_{j-1}<i < i_j} a_i \Big((s_i-s_k)^{2H-1} - (s_{i_{j-1}}-s_k)^{2H-1} \Big) \right]\nonumber.
    \end{align}
    Since $2H-1\in(0,1)$, we may use the elementary inequality $(x+y)^p \le x^p + y^p$ for $p\in(0,1)$ and $x, y \ge 0$ to deduce that
    \begin{align*}
        &(s_k-s_i)^{2H-1} - (s_k-s_{i_{j-1}})^{2H-1} 
        \le (s_i-s_{i_{j-1}})^{2H-1} \text{ for $1\le j\le j_k-1$, $i_{j-1}<i<i_j$,}\\
        &(s_i-s_k)^{2H-1}-(s_{i_{j-1}}-s_k)^{2H-1} \le (s_i-s_{i_{j-1}})^{2H-1} \text{ for $j_k<j\le d+1$, $i_{j-1}<i<i_j$,}
    \end{align*}
    and
    \begin{align*}
        &(s_k-s_i)^{2H-1} \le \sum_{i < \ell \le k} (s_\ell-s_{\ell-1})^{2H-1} \le \sum_{i_{j_k-1}< \ell < i_{j_k}}(s_\ell-s_{\ell-1})^{2H-1} \text{ for $i_{j_k-1} \le i < k$,}\\
        &(s_i-s_k)^{2H-1} \le \sum_{k < \ell \le i} (s_\ell-s_{\ell-1})^{2H-1} \le \sum_{i_{j_k-1}< \ell < i_{j_k}}(s_\ell-s_{\ell-1})^{2H-1} \text{ for $k< i < i_{j_k}$.}
    \end{align*}
    It follows that
    \begin{align*}
        &|\partial_{s_k} g_{\bf a}({\bf s})|\\
        &\le 2H \|{\bf a}\|_\infty^2 \left[\sum_{j=2}^{d+1} \sum_{i_{j-1}<i<i_j} (s_i-s_{i_{j-1}})^{2H-1} +\sum_{i_{j_k-1}\le i < i_{j_k}} \sum_{i_{j_k-1}<\ell<i_{j_k}} (s_{\ell}-s_{\ell-1})^{2H-1} \right]\\
        &\le 4H \|{\bf a}\|_\infty^2 \sum_{j=2}^{d+1} \sum_{i_{j-1}<i<i_j} \sum_{i_{j-1}<\ell< i_j} (s_\ell-s_{\ell-1})^{2H-1}\\
        &\le 4H I \|{\bf a}\|_\infty^2 \sum_{j=2}^{d+1} \sum_{i_{j-1}<\ell<i_j} (s_\ell-s_{\ell-1})^{2H-1}.
    \end{align*}
In the first inequality, the first term is obtained from combining the estimates of the first and last term of (\ref{eq_5.7}) and the second term comes from estimating the two middle terms of (\ref{eq_5.7}). The second inequality comes from concatenating all the consecutive differences.   Note that this inequality is exactly our desired estimate, so the  proof of Lemma \ref{lem:dg:bd} is complete.
\end{proof}

\begin{proposition}\label{lem:ddg:bd}
       Suppose that $H\in(1/2,1)$.
    Let ${\bf a} \in (\Z \setminus\{0\})^I$ be such that $\sum_{i=1}^I a_i = 0$. 
    Let $i_1 < \cdots < i_d$ and $\mathcal{J}^* = \{1,\dots, I\}\setminus\{i_1, \dots, i_d\}$ be as defined in Lemma \ref{lem:dg:bd},
    and $g_{\bf a}({\bf s}) = g_{\bf a}(s_1, \dots, s_I)$ as defined in \eqref{E:g_a(s)}.
    Let $\{p_1<p_2<\cdots<p_{2n}\}$ be a subset of $\{1,\dots,I\}$ such that $p_{i+1} - p_i \ge 2$ for all $1\le i < n$.  Suppose that  $\{\{p_{j_1}, p_{j_2}\}, \dots, \{p_{j_{2n-1}}, p_{j_{2n}}\}\}$ is a partition of $\{p_1,p_2,\dots,p_{2n}\}$.

    Then, there exists a set of indices $\tau(1)<\tau(2)<\cdots < \tau(n)$ such that 
    $\tau(k)\in{\mathcal J}^{\ast}$ for all $1\le k\le n$ and 
    \begin{align*}
        \prod_{k=1}^n |\partial_{s_{p_{j_{2k-1}}}} \partial_{s_{p_{j_{2k}}}} g_{\bf a}({\bf s})| \le 2^n \|{\bf a}\|_\infty^{2n} \prod_{k=1}^n (s_{\tau(k)}-s_{\tau(k)-1})^{2H-2}.
    \end{align*}
\end{proposition}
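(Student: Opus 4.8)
The plan is to reduce the product bound to a combinatorial matching problem. By Lemma~\ref{lem:dg}(ii), for each pair set $\mu_k=\min\{p_{j_{2k-1}},p_{j_{2k}}\}$ and $\nu_k=\max\{p_{j_{2k-1}},p_{j_{2k}}\}$; then
\[
\bigl|\partial_{s_{p_{j_{2k-1}}}}\partial_{s_{p_{j_{2k}}}} g_{\bf a}({\bf s})\bigr|
= 2H(2H-1)\,|a_{\mu_k}|\,|a_{\nu_k}|\,(s_{\nu_k}-s_{\mu_k})^{2H-2}
\le 2\,\|{\bf a}\|_\infty^2\,(s_{\nu_k}-s_{\mu_k})^{2H-2},
\]
since $0<2H(2H-1)<2$ for $H\in(1/2,1)$. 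Because $2H-2<0$, the function $x\mapsto x^{2H-2}$ is decreasing on $(0,\infty)$, so it will suffice to produce \emph{distinct} indices $\tau(1)<\cdots<\tau(n)$ lying in $\mathcal{J}^*$ with $\mu_k<\tau(k)\le\nu_k$ (after relabelling the pairs so the $\tau$'s increase): indeed $[s_{\tau(k)-1},s_{\tau(k)}]\subset[s_{\mu_k},s_{\nu_k}]$, hence $(s_{\nu_k}-s_{\mu_k})^{2H-2}\le(s_{\tau(k)}-s_{\tau(k)-1})^{2H-2}$, and multiplying the $n$ inequalities yields exactly the claimed bound. The index $\tau(k)-1$ is legitimate because $1=i_1\notin\mathcal{J}^*$ forces $\tau(k)\ge2$.

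So the real content is the existence of these $\tau(k)$'s. First I would record the structure of $\mathcal{J}^*$: since ${\bf a}\in(\Z\setminus\{0\})^I$, no two of the $i_r$'s are consecutive — if $i_{r+1}=i_r+1$, then \eqref{E:a:cond} would give $a_{i_r}=0$ — so the complement $\{i_1,\dots,i_d\}$ of $\mathcal{J}^*$ in $\{1,\dots,I\}$ is $2$-separated. Consequently, any block of $L$ consecutive integers in $\{1,\dots,I\}$ contains at least $\lfloor L/2\rfloor$ points of $\mathcal{J}^*$; in particular each interval $(\mu_k,\nu_k]$, which contains $\nu_k-\mu_k\ge2$ integers by the $2$-separation of $p_1<\cdots<p_{2n}$, meets $\mathcal{J}^*$.

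To obtain a full \emph{system of distinct representatives} I would invoke Hall's marriage theorem on the bipartite graph joining the $k$-th pair to $\mathcal{J}^*\cap(\mu_k,\nu_k]$. Hall's condition for a set $S$ of pairs is checked by splitting $U_S:=\bigcup_{k\in S}(\mu_k,\nu_k]$ into its maximal runs of consecutive integers $V_1,\dots,V_r$. Each pair-interval falls inside a single $V_\ell$; if $s_\ell$ of them do, their $2s_\ell$ endpoints are distinct and $2$-separated (a subsequence of $p_1<\cdots<p_{2n}$) and all lie in $[\min V_\ell-1,\max V_\ell]$, a window of $|V_\ell|+1$ integers, which forces $|V_\ell|\ge4s_\ell-2$ and hence $|\mathcal{J}^*\cap V_\ell|\ge\lfloor|V_\ell|/2\rfloor\ge2s_\ell-1\ge s_\ell$. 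Summing over $\ell$ gives $\bigl|\bigcup_{k\in S}\mathcal{J}^*\cap(\mu_k,\nu_k]\bigr|=\sum_\ell|\mathcal{J}^*\cap V_\ell|\ge\sum_\ell s_\ell=|S|$, which is Hall's condition; the resulting SDR, reordered to be increasing, supplies the $\tau(k)$.

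The main obstacle is precisely this Hall-condition verification — in particular the chain of estimates turning ``$2s_\ell$ two-separated endpoints squeezed into a window of $|V_\ell|+1$ integers'' into $|V_\ell|\ge 4s_\ell-2$ and then into $|\mathcal{J}^*\cap V_\ell|\ge s_\ell$. Everything else (the explicit second derivative from Lemma~\ref{lem:dg}, the monotonicity of $x\mapsto x^{2H-2}$, and the bound $2H(2H-1)<2$) is routine.
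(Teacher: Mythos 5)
Your proof is correct but takes a genuinely different — and heavier — route than the paper's. The paper avoids Hall's marriage theorem entirely by shrinking each interval: after ordering the pairs so that $p_{j_1}<p_{j_3}<\cdots<p_{j_{2n-1}}$, it replaces $(p_{j_{2k-1}},p_{j_{2k}}]$ with the narrow interval $(p_{j_{2k-1}},p_{j_{2k-1}+1}]$, where $p_{j_{2k-1}+1}$ denotes the immediate successor of $p_{j_{2k-1}}$ in the sorted list $\{p_1,\dots,p_{2n}\}$ (generally not the other member of the pair). Since $p_{j_{2k-1}+1}\le p_{j_{2k}}$, monotonicity of $x\mapsto x^{2H-2}$ still applies, and the chain $p_{j_1}<p_{j_1+1}\le p_{j_3}<p_{j_3+1}\le\cdots\le p_{j_{2n-1}}$ makes these narrow intervals pairwise disjoint, so any choice of $\tau(k)\in\mathcal{J}^*\cap(p_{j_{2k-1}},p_{j_{2k-1}+1}]$ gives automatically distinct, increasing indices — no system of distinct representatives is needed. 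Each narrow interval still has length $\ge2$ by the $2$-separation of the $p$'s, and so meets $\mathcal{J}^*$, by the same structural observation you make: two consecutive $i_r$'s would force some $a_{i_r}=0$. So the two proofs share the key number-theoretic fact about $\mathcal{J}^*$ and differ only in how distinctness of the $\tau(k)$'s is secured: you invoke Hall's theorem on the wide intervals (and your verification — splitting $U_S$ into maximal runs and squeezing $2s_\ell$ two-separated endpoints into a window of $|V_\ell|+1$ integers — is sound), whereas the paper gets distinctness for free by passing to the successor intervals. Your route is valid and more mechanically general, but the paper's reduction is the one-line idea that makes the marriage-theorem machinery unnecessary.
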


\begin{proof}
    Without loss of generality, we may and will order the partition elements such that $p_{j_{2k-1}} < p_{j_{2k}}$ for all $1\le k\le n$ and $p_{j_1} < p_{j_3} < \cdots < p_{j_{2n-1}}$.
    By Lemma \ref{lem:dg} (ii) and $H\in (1/2,1)$,
    \begin{align}\label{E:prod:ddg:bd}
        \prod_{k=1}^n |\partial_{s_{p_{j_{2k-1}}}} \partial_{s_{p_{j_{2k}}}} g_{\bf a}({\bf s})|
        \le 2^n \|{\bf a}\|_\infty^{2n} \prod_{k=1}^n (s_{p_{j_{2k}}}-s_{p_{j_{2k-1}}})^{2H-2}.
    \end{align}
    For each $1 \le k \le n$, by monotonicity and $2H-2\in(-1,0)$,
    \begin{align*}
        (s_{p_{j_{2k}}}-s_{p_{j_{2k-1}}})^{2H-2}
        \le (s_{p_{j_{2k-1}+1}} - s_{p_{j_{2k-1}}})^{2H-2}.
    \end{align*}
    By the assumption, ${p_{j_{2k-1}+1}} - {p_{j_{2k-1}}} \ge 2$ for each $1 \le k \le n$, and 
    \begin{align}\label{E:sum:a_i}
        a_i + a_{i+1} + \dots + a_I 
        \begin{cases}
        = 0 & \text{if } i\in \{i_0, \dots, i_d\},\\
        \ne 0 & \text{if } i \in \mathcal{J}^*.
        \end{cases}
    \end{align}
    The key observation is that for each $1 \le k \le n$, there exists an index $\tau(k) \in \{1,\dots,I\}\setminus\{ i_1,\dots,i_d\}$ such that $p_{j_{2k-1}} < \tau(k) \le p_{j_{2k-1}+1}$.
    In fact, if this is not true, then $\{p_{j_{2k-1}} + 1, \dots, p_{j_{2k-1}+1}\} \subset \{i_0, \dots, i_d\}$, which implies that at least two indices in $\{i_1, \dots, i_d\}$ are consecutive numbers, say $i_r$ and $i_{r+1} = i_r+1$, but then \eqref{E:sum:a_i} would imply that $a_{i_r} = 0$, which is a contradiction.
    Therefore, for each $1 \le k \le n$, we can find an index $\tau(k) \in {\mathcal J}^{\ast}$ such that $p_{j_{2k-1}} < \tau(k) \le p_{j_{2k-1}+1}$.
    Then, by monotonicity again,
    \begin{align*}
        (s_{p_{j_{2k-1}+1}} - s_{p_{j_{2k-1}}})^{2H-2} \le (s_{\tau(k)}-s_{\tau(k)-1})^{2H-2}.
    \end{align*}
    Also, because of the ordering of the $p_{j_k}$'s, we have 
    \begin{align*}
        p_{j_1} < p_{j_1+1} \le p_{j_3} < p_{j_3 +1} \le p_{j_5} < \cdots \le p_{j_{2n-3}} < p_{j_{2n-3}+1} \le p_{j_{2n-1}},
    \end{align*}
    which implies that the $\tau(k)$'s are all non-repeating.
    This together with \eqref{E:prod:ddg:bd} shows that there is a set of indices $\tau(1)< \tau(2) < \cdots < \tau(n)$ in $\mathcal{J}^*$ such that
    \begin{align*}
        \prod_{k=1}^n |\partial_{s_{p_{j_{2k-1}}}} \partial_{s_{p_{j_{2k}}}} g_{\bf a}({\bf s})|
        &\le 2^n \|{\bf a}\|_\infty^{2n} \prod_{k=1}^n (s_{\tau(k)}-s_{\tau(k)-1})^{2H-2}.
    \end{align*}
    This completes the proof of Proposition \ref{lem:ddg:bd}.
\end{proof}

\section{Proof of Theorem \ref{theorem_hard-estimate-introd}}
\label{S:Fourier:decay}
 We are now ready to prove Theorem \ref{theorem_hard-estimate-introd}, which will complete the proof of our main theorem. 

\begin{proof}[Proof of Theorem \ref{theorem_hard-estimate-introd}]
    Fix $\boldsymbol{\varepsilon}\in \mathcal{A}_{2q}$ and $\boldsymbol\sigma = (\sigma_1,\dots,\sigma_q)\in \Omega^q$.
    Define the indices $k_1<\cdots<k_m$ and $k_1'<\cdots<k_{\ell}'$ such that $m+\ell=q$,
    \[
    \{k_1,\dots,k_{m}\} = \{k: \sigma_k = \Phi_{2k-1}^+ \text{ or } \Phi_{2k-1}^-\},
    \]
    and
    \[
    \{k'_1,\dots,k'_{\ell}\}=\{k:\sigma_k=\partial_{u_{2k-1}}\}.
    \]
    Recall from \eqref{eq_I[e,G]} and \eqref{eq_I_sigma} that
    \begin{align*}
        \mathcal{I}[\boldsymbol{\sigma}, G_{\boldsymbol{\varepsilon}}] = \mathcal{I}_T^\lambda[\boldsymbol{\sigma}, G_{\boldsymbol{\varepsilon}}]
        &= (-1)^{\# J_2(\boldsymbol{\sigma})} \cdot \mathcal{I}[\sigma_{k_m} \cdots \sigma_{k_1}\boldsymbol{\varepsilon}\,,
        \boldsymbol{\sigma} G_{\boldsymbol{\varepsilon}}]\\
        &= (-1)^{\# J_2(\boldsymbol{\sigma})} \int_{\Delta_{\sigma_{k_m} \cdots \sigma_{k_1}\boldsymbol{\varepsilon},T}} 
        e^{-2\pi i \lambda \langle \sigma_{k_m} \cdots \sigma_{k_1}\boldsymbol{\varepsilon}, {\bf u} \rangle}
        \boldsymbol{\sigma}   G_{\boldsymbol{\varepsilon}}({\bf u}) \, d{\bf u}.
    \end{align*} 
    By Proposition \ref{Lemma_deBruno} (i), $\boldsymbol{\sigma}   G_{\boldsymbol{\varepsilon}}({\bf u}) = \boldsymbol{\sigma}   G_{\boldsymbol{\varepsilon}}({\bf s})$ depends only on the non-trivial variables ${\bf s} = (s_1,\dots, s_I)$ (see Definition \ref{def_non-trivial-Variable}).
    Note that $\sigma_{k_m} \cdots \sigma_{k_1}\boldsymbol{\varepsilon}$ has $m$ entries being equal to $\ast$ and $\mathcal{I}[\boldsymbol{\sigma}, G_{\boldsymbol{\varepsilon}}]$ is a $2q-m$ iterated integral (see Lemma \ref{lemma_properties}). 
    Hence, there are $2q-m-I$ many trivial variables and
    \begin{equation}\label{E:I:ds}
        \left|\mathcal{I}[\boldsymbol{\sigma}, G_{\boldsymbol{\varepsilon}}] \right|\le  T^{2q-m-I}\int_{0<s_1<\cdots <s_I<T}  |\boldsymbol\sigma G_{\boldsymbol\varepsilon}({\bf s})| d{\bf s},
    \end{equation}
    where $T^{2q-m-I}$ comes from integrating the trivial variables corresponding to the non-$\ast$ entries.
    Thanks to Fa\`a di Bruno's formula (Proposition \ref{Lemma_deBruno} (ii)), we can write
    \begin{equation}\label{E:FdB1}
        \boldsymbol\sigma G_{\boldsymbol\varepsilon}({\bf s})
        = \sum_{P\in \mathscr{P}_2\{{p_1},\dots,{p_{\ell}}\}} \pi^{\# P} \exp(\pi g_{\bf a}({\bf s})) \prod_{B \in P} g_{\bf a}^{(B)}({\bf s}),
    \end{equation}
    where ${\bf a} = (a_1,\dots,a_I)$ denotes the non-trivial entries in $\sigma_{k_m}\cdots\sigma_{k_1}\boldsymbol{\varepsilon}$,
    \[
        g_{\bf a}({\bf s}) = -\Var\left( \sum_{i=1}^I a_i B(s_i)\right),
    \]
     and ${p_1},\dots,{p_{\ell}}$ are the indices for the differentiation variables $s_{{p_1}},\cdots,  s_{{p_{\ell}}}$ which are parts of the non-trivial variables $s_1, \dots, s_I$ (see Remark \ref{rmk:entry} (i)). These variables and indices are determined by $\boldsymbol{\varepsilon}$ and $\boldsymbol{\sigma}$.

    Next, we estimate \eqref{E:FdB1} term by term. Fix a partition $P\in \mathscr{P}_2\{p_1,\dots, p_\ell\}$, 
    and let 
    \begin{align}\begin{split}\label{E:n:r}
        &n = n(P) := \#\{ B \in P : \# B = 2 \},\\
        &r = r(P) := \#\{ B \in P : \# B = 1\} = \ell-2n.
    \end{split}\end{align}
    Recall from Lemma \ref{lem:var:reduc} (iii) that $\sum_{i=1}^Ia_i=0$.
    Let $1=i_1<\cdots<i_d<I$ be such that
    \[
        a_i+a_{i+1}+\dots+a_I 
        \begin{cases}
            =0 & \text{for } i\in \{i_1,\dots,i_d\},\\
            \ne 0 & \text{for } i \in \mathcal{J}^*,
        \end{cases}
    \]
    where
    \[
        \mathcal{J}^*:=\{1,\dots, I\} \setminus\{i_1,\dots, i_d\}.
    \]
    The value of $d$ and the set $\mathcal{J}^*$ depend on $\boldsymbol{\varepsilon}$ and $\boldsymbol{\sigma}$. We have the following estimates:
    \begin{enumerate}
        \item {\bf Variance estimate}:  By Proposition \ref{lem:LND} and $I \le 2q$, we find that for some constant $1<K<\infty$ depending only on $H$,
    \begin{align}\begin{split}\label{E:exp:var:bd}
        \exp(\pi g_{\bf a}({\bf s}))
        &\le \exp\left\{ -K^{-q} \sum_{i\in \mathcal{J}^*} (s_i-s_{i-1})^{2H} \right\}.
    \end{split}\end{align}
    \smallskip
    \item {\bf First derivative bound}: By Proposition \ref{lem:dg:bd}, the relation $r=\ell-2n$, and the bound $\|a\|_{\infty}\le 3$ (see Proposition \ref{Lemma_deBruno} (ii)), we have
    \begin{align*}
        \prod_{B \in P : \# B = 1} |g^{(B)}_{\bf a}({\bf s})| \le \left[ 36 I  \sum_{j\in \mathcal{J}^*} (s_j-s_{j-1})^{2H-1} \right]^{\ell-2n}.
    \end{align*}
    Hence, by the elementary inequality $(\sum_{i=1}^N |a_i|)^M \le N^M \sum_{i=1}^N |a_i|^M$ for all integers $M, N\ge0$ as well as $I \le 2q$ and $\ell-2n\le \ell\le q$,
    \begin{align}\label{E:prod:dg:bd8}
        \prod_{B \in P : \# B = 1} |g^{(B)}_{\bf a}({\bf s})| 
        &\le (144)^q q^{2q} \sum_{j\in \mathcal{J}^*} (s_j-s_{j-1})^{(2H-1)(\ell-2n)}.
    \end{align}
    \smallskip
    \item  {\bf Second derivative bound}: Suppose that
    \begin{align*}
        \bigcup_{B \in P: \# B = 2} B = \{p_1, p_2, \dots, p_{2n}\},
    \end{align*}
    where $p_1 < p_2 < \cdots < p_{2n}$ and 
    \[
    \{B \in P : \# B = 2\} = \{ \{p_{j_1},p_{j_2}\}, \dots, \{p_{j_{2n-1}}, p_{j_{2n}} \}\}
    \]
    with $p_{j_{2k-1}}<p_{j_{2k}}$ for all $1\le k \le n$.
    By Lemma \ref{lem:dvar:sep}, $p_{i+1}-p_i \ge 2$ for all $1\le i \le n$.
    Hence, we may appeal to Proposition \ref{lem:ddg:bd} and the bound $\|a\|_{\infty}\le 3$ to find that
    \begin{align}\begin{split}\label{E:prod:ddg:bd2}
        \prod_{B \in P : \# B = 2} |g_{\bf a}^{(B)}({\bf s})|
        & \le 2^n \|{\bf a}\|_\infty^{2n} \prod_{k=1}^n (s_{\tau(k)}-s_{\tau(k)-1})^{2H-2}\\
        \le& (18)^q \prod_{k=1}^n (s_{\tau(k)}-s_{\tau(k)-1})^{2H-2}\\
    \end{split}\end{align}
    where we have used $n \le q$ to obtain the last inequality and $\tau(1),\dots, \tau(n)\in{\mathcal J}^{\ast}$ are obtained from Proposition \ref{lem:ddg:bd}.
    \smallskip
    \item {\bf Number of possible partitions}: 
    %
    Denote 
    \[
    \mathscr{P}_2 = \mathscr{P}_2(\boldsymbol{\varepsilon},\boldsymbol{\sigma}) := \mathscr{P}_2\{p_1,\dots,p_\ell\}.
    \]
    See Lemma \ref{lem:fdb} and Proposition \ref{Lemma_deBruno} for the definitions of $\mathscr{P}\{\cdots\}$ and $\mathscr{P}_2\{\cdots\}$.
    Clearly, $\mathscr{P}_2$ can be written as the disjoint union
    \[
        \mathscr{P}_2
        = \bigcup_{n=0}^{\lfloor \ell/2\rfloor} \left\{ P \in \mathscr{P}_2\{{p_1},\dots,{p_{\ell}}\} : \begin{array}{l}
            \# \{B \in P : \# B = 2\} = n \text{ and}\\
            \# \{ B \in P : \# B = 1\} = \ell-2n
        \end{array}  \right\}.
    \]
    In particular, we see that every partition $P \in \mathscr{P}_2\{p_1,\dots,p_\ell\}$ can be split into two disjoint partitions, i.e., $P = Q \cup R$, where $\# B = 2$ for all $B \in Q$ and $\# B = 1$ for all $R$.
    Observe that $P$ is completely determined once $Q$ is fixed (because $R$ is simply formed by single elements in the complement).
    This observation leads to the following identity and crude estimate:
    \begin{align*}
        \qquad \mathscr{P}_2 = \bigcup_{n=0}^{\lfloor \ell/2\rfloor} \bigcup_{\substack{A \subset \{p_1,\dots,p_\ell\}\\ \# A = 2n}} \left\{Q \cup R : \begin{array}{l}
        Q \in \mathscr{P}(A) \text{ with } \# B = 2,\, \forall B \in Q,\\
        \text{and } R = \{ \{a\} : a \in \{p_1,\dots,p_\ell\}\setminus A \}
        \end{array}\right\}
    \end{align*}    
    \begin{align}\begin{split}\label{E:P:bd}
        \qquad\#\mathscr{P}_2
        &= \sum_{n=0}^{\lfloor \ell/2\rfloor} \sum_{\substack{A \subset \{p_1,\dots,p_\ell\}\\ \# A = 2n}} \#\big\{ Q \in \mathscr{P}(A) : \forall B \in Q, \# B = 2  \big\}\\
        &\le \sum_{n=0}^{\lfloor \ell/2\rfloor} \sum_{\substack{A \subset \{p_1,\dots,p_\ell\}\\ \# A = 2n}} (2n)^{2n} = \sum_{n=0}^{\lfloor \ell/2\rfloor} \binom{\ell}{2n} (2n)^{2n}
        \le \ell \cdot \ell^\ell \cdot \ell^\ell
        \le q^{2q+1}.
    \end{split}\end{align}
    One can derive a tighter bound for $\# \mathscr{P}_2$ but we shall not do that here since the crude estimate above is enough for our purpose.
    \smallskip
    \item {\bf Number of integrating variables}: Finally, we observe that the quantity $a_{p_i+1}+\cdots+a_I$ must be non-zero
    for every $i \in \{1,\dots, \ell\}$
    (which corresponds to a differentiation variable $s_{p_i}=u_{2k'_i-1}$) because this quantity would be equal to  $\varepsilon_{2k_i'} + \cdots + \varepsilon_{2q}$, which is non-zero since it is a sum of an odd number of $\varepsilon_j \in \{-1,1\}$.
    This implies that
    \begin{align}\label{E:I-d<m}
        I-d \ge \ell.
    \end{align}
    \end{enumerate}

   Applying the estimates (\ref{E:exp:var:bd}), (\ref{E:prod:dg:bd8}) and (\ref{E:prod:ddg:bd2}) to (\ref{E:FdB1}), we obtain
   $$
    \begin{aligned}    
    &\,|\boldsymbol{\sigma} G_{\boldsymbol{\varepsilon}}({\bf s})|\\
      &\le C_0^q q^{2q} \sum_{P\in \mathscr{P}_2} e^{-K^{-q}\sum_{i\in{\mathcal J}^{\ast}}(s_i-s_{i-1})^{2H}}\sum_{j \in \mathcal{J}^*} (s_j-s_{j-1})^{(2H-1)(\ell-2n)} \prod_{k=1}^n(s_{\tau(k)}-s_{\tau(k)-1})^{2H-2}\\
    &=C_0^qq^{2q} \sum_{P\in \mathscr{P}_2}
   \sum_{j \in \mathcal{J}^*}\left\{ (s_j-s_{j-1})^{(2H-1)(\ell-2n)} \prod_{k\in{\mathcal J}^{\ast}\setminus\{\tau(1),\dots,\tau(n)\}} e^{-K^{-q}(s_{k}-s_{k-1})^{2H}} \right.\\
    &  \hskip1.6in \left.\times \prod_{k=1}^n\left[(s_{\tau(k)}-s_{\tau(k)-1})^{2H-2}\, e^{-K^{-q}(s_{\tau(k)}-s_{\tau(k)-1})^{2H}}\right]\right\},
    \end{aligned}
    $$
    where $s_0=0$ and $C_0 = 144\cdot18\cdot\pi$. 
    Note that the last expression depends only on $\{ s_i-s_{i-1} \}_{i \in {\mathcal J}^{\ast}}$.
    Let us relabel the variables $\{ s_i-s_{i-1} \}_{i \in {\mathcal J}^{\ast}}$ by $\{\tilde{s}_i\}_{i=1,2,\dots,I-d}$ and write $\tilde{\bf s} = (\tilde{s}_1,\dots, \tilde{s}_{I-d})$. 
    Define $\Lambda_{P,j}(\tilde{s}_1,\dots,\tilde{s}_{I-d})$
    as the expression inside $\{ \cdots\}$ in the last display, so that 
    $$
   |\boldsymbol{\sigma} G_{\boldsymbol{\varepsilon}}({\bf s})|\le C_0^qq^{2q}\sum_{ P\in\mathscr{P}_2}\sum_{j\in{\mathcal J}^{\ast}}\Lambda_{P,j}(\tilde{s}_1,\dots,\tilde{s}_{I-d}).
    $$
   Putting this back into \eqref{E:I:ds}, changing variables to $\tilde{\bf s}$, and then enlarging the domain of integration of each variable $\tilde{s}_i$ uniformly to $[0,T]$ yield the following:
   \begin{align}\begin{split}\label{eq_I_estimate-purple}
       &\left| \mathcal{I}[\boldsymbol\sigma, G_{\boldsymbol\varepsilon}] \right| 
        \le T^{2q-m-I} \int_{0<s_1<\cdots<s_I<T} |\boldsymbol{\sigma} G_{\boldsymbol{\varepsilon}}({\bf s})| d{\bf s}\\
       &\quad \le C_0^qq^{2q}T^{2q-m-I+d}\sum_{P\in \mathscr{P}_2}\sum_{j\in{\mathcal J}^{\ast}}\int_{[0,T]^{I-d}} \Lambda_{P,j}(\tilde{s}_1,\dots,\tilde{s}_{I-d})\,d{\tilde{\bf s}}.
        \end{split}\end{align}
        Note that each $\Lambda_{P,j}$ is a non-negative function of the form
        $$
        \Lambda_{P,j}(\tilde{s}_1,\dots,\tilde{s}_{I-d})= \prod_{i=1}^{I-d}\psi_i(\tilde{s}_i)
        $$
        where $\psi_i\ge 0$ for each $i$, so we can decompose the iterated integral and write
       $$
       \begin{aligned}
{\mathcal I}_{P,j} :=\int_{[0,T]^{I-d}} \Lambda_{P,j}(\tilde{s}_1,\dots,\tilde{s}_{I-d})\,d{\tilde{\bf s}} 
=& \prod_{i=1}^{I-d} \left(\int_0^T\psi_i(u)du\right).
        \end{aligned}$$
 Hence, according to the explicit forms of the functions $\psi_i$'s, we see that
      \begin{align}\label{eq-IPj}
  &\,{\mathcal I}_{P,j} = \int_{[0,T]^{I-d}} \Lambda_{P,j}(\tilde{\bf s})\,d{\tilde{\bf s}}\\
  &=
  \begin{cases}\left(\int_0^Tu^{2H-2}e^{-\frac{u^{2H}}{K^q}}du\right)^n \left(\int_0^T e^{-\frac{u^{2H}}{K^q}}du\right)^{I-d-n-1}  \left(\int_0^T u^{(2H-1)(\ell-2n)}e^{-\frac{u^{2H}}{K^q}}du \right) ~ \mbox{or}\medskip\\
  \left(\int_0^Tu^{2H-2}e^{-\frac{u^{2H}}{K^q}}du\right)^{n-1} \left(\int_0^T e^{-\frac{u^{2H}}{K^q}}du\right)^{I-d-n}  \left(\int_0^T u^{(2H-1)(\ell-2n)}u^{2H-2}e^{-\frac{u^{2H}}{K^q}}du \right) \nonumber
           \end{cases}
       \end{align}
      where the first case happens when $j\not\in\{\tau(1),\cdots, \tau(n)\}$ and the second case happens when $j\in\{\tau(1),\cdots, \tau(n)\}$.

      We now estimate each integral in \eqref{eq-IPj}. By the change of variable $u = K^{q/(2H)} x$, we see that
    \begin{gather*}
    \int_0^{\infty}u^{2H-2}e^{-K^{-q}u^{2H}}du = K^{q(1-\frac{1}{2H})}\int_0^{\infty} x^{2H-2}e^{-x^{2H}}dx\le K_1^q,\\
    \int_0^{\infty} e^{-K^{-q}u^{2H}}du = K^{\frac{q}{2H}}\int_0^\infty e^{-x^{2H}}dx \le K_1^q
    \end{gather*}
    for some constant $0<K_1<\infty$ depending only on $H$. Note that the integrals are all finite because $1/2 < H < 1$.  For the last two integrals in \eqref{eq-IPj}, we have
    $$
    \begin{aligned}
        \int_0^{\infty} u^{(2H-1)(\ell-2n)}e^{-K^{-q}u^{2H}}du= &~K^{\frac{q}{2H}(1+(2H-1)(\ell-2n))}\int_0^{\infty} x^{(2H-1)(\ell-2n)} e^{-x^{2H}}dx\\
        \le &~K^{\frac{q}{2H}(1+(2H-1)q)}\left(1+\int_1^{\infty} x^{(2H-1)q} e^{-x^{2H}}dx\right),
    \end{aligned}
    $$
    where we used $0\le \ell-2n\le q$ to deduce the last inequality. Similarly, 
    $$ 
    \begin{aligned}
    \int_0^{\infty} u^{(2H-1)(\ell-2n)}u^{2H-2}e^{-K^{-q}u^{2H}}du  &\le K^{\frac{q}{2H}(1+(2H-1)q+2H-2)} \\
    &\times\left(\int_0^1x^{2H-2}dx+\int_1^{\infty} x^{(2H-1)q+2H-2} e^{-x^{2H}}dx\right).
   \end{aligned}
    $$
   By the change of variable $x^{2H} = v^2/2$ and standard Gaussian moment estimates, it is not hard to see that
     \begin{align*}
        \int_0^\infty x^{(2H-1)q}e^{-x^{2H}} du \le C_1^q q^q \quad \text{and} \quad \int_0^{\infty} x^{(2H-1)q+2H-2}e^{-x^{2H}}dx \le C_1^qq^q,
     \end{align*}
     where $0<C_1<\infty$ is a constant depending only on $H$. 
     Hence, we deduce that
     \begin{gather*}
        \int_0^{\infty} u^{(2H-1)(\ell-2n)}e^{-K^{-q}u^{2H}}du \le K_2^{q^2} q^q,\\
    \int_0^{\infty} u^{(2H-1)(\ell-2n)}u^{2H-2}e^{-K^{-q}u^{2H}}du  \le K_2^{q^2} q^q
    \end{gather*}
    for some constant $0<K_2<\infty$ depending only on $H$. Finally, we divide the proof into two cases.

   \noindent  {\bf Case 1:} $T<1$. We bound the integrals in \eqref{eq-IPj} involving $u^{2H-2}$ by integrating to infinity and keep the other integrals without $u^{2H-2}$. In both cases, the integrands without $u^{2H-2}$ are less than 1, so we have 
    \begin{align*}
    {\mathcal I}_{P,j}&\le \begin{cases} K_1^{qn}\cdot  T^{I-d-n-1}\cdot T & \text{if } j \not\in\{\tau(1),\dots, \tau(n)\}\\
  K_1^{q(n-1)} \cdot T^{I-d-n} \cdot K_2^{q^2}q^q& \text{if } j \in \{\tau(1),\dots,\tau(n)\}
           \end{cases}\\
           &\le C_2^{q^2} q^q T^{I-d-n}
    \end{align*}
    for some constant $0<C_2<\infty$ depending only on $H$. Putting this back into \eqref{eq_I_estimate-purple} and using the bound $\#{\mathscr P}_2\le q^{2q+1}$ from \eqref{E:P:bd}, $\#{\mathcal J}^{\ast}\le 2q$,  $n\le \ell/2$ and $q=\ell+m$, we get that
    $$
    \begin{aligned}
    \left| \mathcal{I}[\boldsymbol\sigma, G_{\boldsymbol\varepsilon}] \right|&\le C_0^q q^{2q} T^{2q-m-I+d} \sum_{P\in \mathscr{P}_2}\sum_{j\in{\mathcal J}^{\ast}} C_2^{q^2} q^q T^{I-d-\ell/2} \\
    &\le  C^{q^2} T^{q+\ell-\frac{\ell}{2}} = {C}^{q^2} T^{q+\frac{\ell}{2}} \\
    &\le C^{q^2}  T^{q} 
    \end{aligned}
    $$
    for some sufficiently large constant $C>1$ depending on $C_0$ and $C_2$, which both depend only on $H$ (noting that $q^{cq} = e^{cq\ln q}\le e^{cq^2}$). In the last inequality, we used the fact that $T<1$.

    \noindent   {\bf Case 2:} $T\ge 1$. In this case, replacing $T$ by $\infty$ for all integrals in (\ref{eq-IPj}) and using $I-d \le 2q$, we see that there is a constant $C_2>1$ depending only on $H$ such that ${\mathcal I}_{P,j} \le C_2^{q^2}$ for all $P\in \mathscr{P}_2$ and $j \in \mathcal{J}^*$. Hence, going back to \eqref{eq_I_estimate-purple} and counting all partitions, we can find a large constant $C$, depending only $H$ such that 
       $$
       \left| \mathcal{I}[\boldsymbol\sigma, G_{\boldsymbol\varepsilon}] \right|\le C^{q^2} T^{2q-m-I+d} = C^{q^2} T^{q+\ell-(I-d)}.
       $$
       But $I-d\ge \ell$ thanks to (\ref{E:I-d<m}). Since $T\ge 1$, the above is less than $C^{q^2}T^{q}$.

       Combining both cases yields \eqref{E:|I|:bd} with a constant $C>1$ depending only on $H$ but not on any of the parameters $q, \boldsymbol{\sigma}, \boldsymbol{\varepsilon}, \lambda$ or $T$.
       This completes the whole proof. 
\end{proof}

\noindent{\bf Acknowledgments.} The project was initiated when both authors were visiting the Chinese University of Hong Kong (CUHK) in Fall 2024. As alumni of CUHK, they would like express their sincere gratitudes to Professor De-Jun Feng for his invitations and hospitality as well as many useful discussions. Chun-Kit Lai was partially supported by the AMS-Simons Research Enhancement Grants for Primarily Undergraduate Institution (PUI) Faculty.
Cheuk Yin Lee was partially supported by a research startup fund of the Chinese University of Hong Kong, Shenzhen and the Shenzhen Peacock fund 2025TC0013. Finally, the authors would like to thank the referees for their careful reading of this paper and many valuable comments.

\bibliographystyle{amsalpha}
\bibliography{biblio.bib}

\providecommand{\bysame}{\leavevmode\hbox to3em{\hrulefill}\thinspace}
\providecommand{\MR}{\relax\ifhmode\unskip\space\fi MR }
\providecommand{\MRhref}[2]{%
  \href{http://www.ams.org/mathscinet-getitem?mr=#1}{#2}
}
\providecommand{\href}[2]{#2}
\begin{thebibliography}{BH{\O}Z08}

\bibitem[Adl77]{Adler1977}
Robert~J. Adler, \emph{Hausdorff dimension and {G}aussian fields}, Ann.
  Probability \textbf{5} (1977), no.~1, 145--151. \MR{426123}

\bibitem[BD17]{BD17}
Jean Bourgain and Semyon Dyatlov, \emph{Fourier dimension and spectral gaps for
  hyperbolic surfaces}, Geom. Funct. Anal. \textbf{27} (2017), no.~4, 744--771.
  \MR{3678500}

\bibitem[Ber74]{B73}
Simeon~M. Berman, \emph{Local nondeterminism and local times of {G}aussian
  processes}, Indiana Univ. Math. J. \textbf{23} (1973/74), 69--94. \MR{317397}

\bibitem[BG62]{BG62}
R.~M. Blumenthal and R.~K. Getoor, \emph{The dimension of the set of zeros and
  the graph of a symmetric stable process}, Illinois J. Math. \textbf{6}
  (1962), 308--316. \MR{138134}

\bibitem[BH{\O}Z08]{fBm-integral}
Francesca Biagini, Yaozhong Hu, Bernt {\O}ksendal, and Tusheng Zhang,
  \emph{Stochastic calculus for fractional {B}rownian motion and applications},
  Probability and its Applications (New York), Springer-Verlag London, Ltd.,
  London, 2008. \MR{2387368}

\bibitem[CD82]{CD82}
Jack Cuzick and Johannes~P. DuPreez, \emph{Joint continuity of {G}aussian local
  times}, Ann. Probab. \textbf{10} (1982), no.~3, 810--817. \MR{659550}

\bibitem[CHQW]{Mandelbrot-harmonic}
Xinxin Chen, Yong Han, Yanqi Qiu, and Zipeng Wang, \emph{Harmonic analysis of
  {M}andelbrot cascades--in the context of vector-valued martingales},
  https://arxiv.org/pdf/2409.13164.

\bibitem[CLS]{Fdim-mandelbro}
Changhao Chen, Bing Li, and Ville Suomala, \emph{Fourier dimension of
  {M}andelbrot multiplicative cascade}, https://arxiv.org/pdf/2409.13455.

\bibitem[Con90]{Conway}
John~B. Conway, \emph{A course in functional analysis}, second ed., Graduate
  Texts in Mathematics, vol.~96, Springer-Verlag, New York, 1990. \MR{1070713}

\bibitem[DEoL63]{DEL63}
H.~Davenport, P.~Erd\H~os, and W.~J. LeVeque, \emph{On {W}eyl's criterion for
  uniform distribution}, Michigan Math. J. \textbf{10} (1963), 311--314.
  \MR{153656}

\bibitem[DL23]{DL2023}
Dexter Dysthe and Chun-Kit Lai, \emph{Hausdorff and {F}ourier dimension of
  graph of continuous additive processes}, Stochastic Process. Appl.
  \textbf{155} (2023), 355--392. \MR{4509492}

\bibitem[Eks16]{Ek2016}
Fredrik Ekstr\"om, \emph{Fourier dimension of random images}, Ark. Mat.
  \textbf{54} (2016), no.~2, 455--471. \MR{3546361}

\bibitem[Fal90]{Falconer-book}
Kenneth Falconer, \emph{Fractal geometry}, John Wiley \& Sons, Ltd.,
  Chichester, 1990, Mathematical foundations and applications. \MR{1102677}

\bibitem[FdB55]{FdB1855}
Francesco Fa{\`a}~di Bruno, Francesco, \emph{Sullo sviluppo delle funzioni},
  Annali di scienze matematiche e fisiche \textbf{6} (1855), no.~1855, 479--80.

\bibitem[FdB57]{FdB1857}
Francesco Fa{\`a}~di Bruno, \emph{Note sur une nouvelle formule de calcul
  diff{\'e}rentiel}, Quarterly J. Pure Appl. Math \textbf{1} (1857),
  no.~359-360, 12.

\bibitem[FH23]{FH2023}
Robert Fraser and Kyle Hambrook, \emph{Explicit {S}alem sets in {$\Bbb{R}^n$}},
  Adv. Math. \textbf{416} (2023), Paper No. 108901, 23. \MR{4548424}

\bibitem[FOS14]{FOS2014}
Jonathan~M. Fraser, Tuomas Orponen, and Tuomas Sahlsten, \emph{On {F}ourier
  analytic properties of graphs}, Int. Math. Res. Not. IMRN (2014), no.~10,
  2730--2745. \MR{3214283}

\bibitem[Fra24]{Jon2024}
Jonathan~M. Fraser, \emph{The {F}ourier spectrum and sumset type problems},
  Math. Ann. \textbf{390} (2024), no.~3, 3891--3930. \MR{4803466}

\bibitem[FS18]{FS2018}
Jonathan~M. Fraser and Tuomas Sahlsten, \emph{On the {F}ourier analytic
  structure of the {B}rownian graph}, Anal. PDE \textbf{11} (2018), no.~1,
  115--132. \MR{3707292}

\bibitem[Gra14]{Grafakos}
Loukas Grafakos, \emph{Classical {F}ourier analysis}, third ed., Graduate Texts
  in Mathematics, vol. 249, Springer, New York, 2014. \MR{3243734}

\bibitem[Har06]{H06}
Michael Hardy, \emph{Combinatorics of partial derivatives}, Electron. J.
  Combin. \textbf{13} (2006), no.~1, Research Paper 1, 13. \MR{2200529}

\bibitem[JS16]{JS16}
Thomas Jordan and Tuomas Sahlsten, \emph{Fourier transforms of {G}ibbs measures
  for the {G}auss map}, Math. Ann. \textbf{364} (2016), no.~3-4, 983--1023.
  \MR{3466857}

\bibitem[Kah64]{Kahane64}
Jean-Pierre Kahane, \emph{Sur les mauvaises r\'{e}partitions modulo {$1$}},
  Ann. Inst. Fourier (Grenoble) \textbf{14} (1964), no.~fasc. 2, 519--526.
  \MR{174545}

\bibitem[Kah85]{Kah}
\bysame, \emph{Some random series of functions}, second ed., Cambridge Studies
  in Advanced Mathematics, vol.~5, Cambridge University Press, Cambridge, 1985.
  \MR{833073}

\bibitem[Kah93]{K1993}
J.-P. Kahane, \emph{Fractals and random measures}, Bull. Sci. Math.
  \textbf{117} (1993), no.~1, 153--159. \MR{1205416}

\bibitem[Kau80]{Kaufman80}
R.~Kaufman, \emph{Continued fractions and {F}ourier transforms}, Mathematika
  \textbf{27} (1980), no.~2, 262--267. \MR{610711}

\bibitem[Kau81]{Kaufman81}
\bysame, \emph{On the theorem of {J}arn\'ik and {B}esicovitch}, Acta Arith.
  \textbf{39} (1981), no.~3, 265--267. \MR{640914}

\bibitem[KL25]{KL25}
Davar Khoshnevisan and Cheuk~Yin Lee, \emph{On the passage times of
  self-similar {G}aussian processes on curved boundaries}, arXiv preprint
  arXiv:2506.15949 (2025).

\bibitem[KM20]{KM20}
Kamran Kalbasi and Thomas Mountford, \emph{On the probability distribution of
  the local times of diagonally operator-self-similar {G}aussian fields with
  stationary increments}, Bernoulli \textbf{26} (2020), no.~2, 1504--1534.
  \MR{4058376}

\bibitem[KWX06]{KWX06}
Davar Khoshnevisan, Dongsheng Wu, and Yimin Xiao, \emph{Sectorial local
  non-determinism and the geometry of the {B}rownian sheet}, Electron. J.
  Probab. \textbf{11} (2006), no. 32, 817--843. \MR{2261054}

\bibitem[Lee22]{L22}
Cheuk~Yin Lee, \emph{The {H}ausdorff measure of the range and level sets of
  {G}aussian random fields with sectorial local nondeterminism}, Bernoulli
  \textbf{28} (2022), no.~1, 277--306. \MR{4337706}

\bibitem[{\L}P09]{LP09}
Izabella {\L}aba and Malabika Pramanik, \emph{Arithmetic progressions in sets
  of fractional dimension}, Geom. Funct. Anal. \textbf{19} (2009), no.~2,
  429--456. \MR{2545245}

\bibitem[LP22]{LP22}
Yiyu Liang and Malabika Pramanik, \emph{Fourier dimension and avoidance of
  linear patterns}, Adv. Math. \textbf{399} (2022), Paper No. 108252, 50.
  \MR{4384610}

\bibitem[LT25]{lee2025fourierdimensionfractionalbrownian}
Cheuk~Yin Lee and Samy Tindel, \emph{On the fourier dimension of fractional
  brownian graphs}, 2025.

\bibitem[LX23a]{LX23}
Cheuk~Yin Lee and Yimin Xiao, \emph{Chung-type law of the iterated logarithm
  and exact moduli of continuity for a class of anisotropic {G}aussian random
  fields}, Bernoulli \textbf{29} (2023), no.~1, 523--550. \MR{4497257}

\bibitem[LX23b]{LX25}
\bysame, \emph{Local times of anisotropic {G}aussian random fields and
  stochastic heat equation}, arXiv preprint arXiv:2308.13732 (2023).

\bibitem[Mat15]{Mattila}
Pertti Mattila, \emph{Fourier analysis and {H}ausdorff dimension}, Cambridge
  Studies in Advanced Mathematics, vol. 150, Cambridge University Press,
  Cambridge, 2015. \MR{3617376}

\bibitem[Mis08]{Mishura}
Yuliya~S. Mishura, \emph{Stochastic calculus for fractional {B}rownian motion
  and related processes}, Lecture Notes in Mathematics, vol. 1929,
  Springer-Verlag, Berlin, 2008. \MR{2378138}

\bibitem[MP87]{MP87}
Ditlev Monrad and Loren~D. Pitt, \emph{Local nondeterminism and {H}ausdorff
  dimension}, Seminar on stochastic processes, 1986 ({C}harlottesville, {V}a.,
  1986), Progr. Probab. Statist., vol.~13, Birkh\"auser Boston, Boston, MA,
  1987, pp.~163--189. \MR{902433}

\bibitem[MS18]{MS2018}
Carolina~A. Mosquera and Pablo~S. Shmerkin, \emph{Self-similar measures:
  asymptotic bounds for the dimension and {F}ourier decay of smooth images},
  Ann. Acad. Sci. Fenn. Math. \textbf{43} (2018), no.~2, 823--834. \MR{3839838}

\bibitem[Muk18]{M2018}
Safari Mukeru, \emph{The zero set of fractional {B}rownian motion is a {S}alem
  set}, J. Fourier Anal. Appl. \textbf{24} (2018), no.~4, 957--999.
  \MR{3843846}

\bibitem[Nua06]{Nualart}
David Nualart, \emph{The {M}alliavin calculus and related topics}, second ed.,
  Probability and its Applications (New York), Springer-Verlag, Berlin, 2006.
  \MR{2200233}

\bibitem[Ore70]{Orey1970}
Steven Orey, \emph{Gaussian sample functions and the {H}ausdorff dimension of
  level crossings}, Z. Wahrscheinlichkeitstheorie und Verw. Gebiete \textbf{15}
  (1970), 249--256. \MR{279882}

\bibitem[Pit78]{P78}
Loren~D. Pitt, \emph{Local times for {G}aussian vector fields}, Indiana Univ.
  Math. J. \textbf{27} (1978), no.~2, 309--330. \MR{471055}

\bibitem[Shm17]{S2017}
Pablo Shmerkin, \emph{Salem sets with no arithmetic progressions}, Int. Math.
  Res. Not. IMRN (2017), no.~7, 1929--1941. \MR{3658188}

\bibitem[SS18]{SS2018}
Pablo Shmerkin and Ville Suomala, \emph{Spatially independent martingales,
  intersections, and applications}, Mem. Amer. Math. Soc. \textbf{251} (2018),
  no.~1195, v+102. \MR{3756896}

\bibitem[SX06]{SH06}
Narn-Rueih Shieh and Yimin Xiao, \emph{Images of {G}aussian random fields:
  {S}alem sets and interior points}, Studia Math. \textbf{176} (2006), no.~1,
  37--60. \MR{2263961}

\bibitem[Tay53]{Taylor53}
S.~J. Taylor, \emph{The {H}ausdorff {$\alpha$}-dimensional measure of
  {B}rownian paths in {$n$}-space}, Proc. Cambridge Philos. Soc. \textbf{49}
  (1953), 31--39. \MR{52719}

\bibitem[WX07]{WX2007}
Dongsheng Wu and Yimin Xiao, \emph{Geometric properties of fractional
  {B}rownian sheets}, J. Fourier Anal. Appl. \textbf{13} (2007), no.~1, 1--37.
  \MR{2296726}

\bibitem[Xia08]{X08}
Yimin Xiao, \emph{Strong local nondeterminism and sample path properties of
  {G}aussian random fields}, Asymptotic theory in probability and statistics
  with applications, Adv. Lect. Math. (ALM), vol.~2, Int. Press, Somerville,
  MA, 2008, pp.~136--176. \MR{2466984}

\bibitem[Xia09]{X09}
\bysame, \emph{Sample path properties of anisotropic {G}aussian random fields},
  A minicourse on stochastic partial differential equations, Lecture Notes in
  Math., vol. 1962, Springer, Berlin, 2009, pp.~145--212. \MR{2508776}

\bibitem[Xia13]{X13}
\bysame, \emph{Recent developments on fractal properties of {G}aussian random
  fields}, Further developments in fractals and related fields, Trends Math.,
  Birkh\"auser/Springer, New York, 2013, pp.~255--288. \MR{3184196}

\end{thebibliography}

\end{document}